%%%% CHANGES PREFACED BY NB  (= NEW BIT)
% IN Maths/nt2/papersmine/1NPREP/quaternions/0alatest/0A14/6_16_JULY2014/
% This is file PSP2esam.tex
%      released v1.02, 22th August 1997
%      released v1.01, 2nd June 1997
% first release v1.0, 6th May 1997
%   (based on CUP typesetting system article PSP142 in vol 121:1)
% Copyright (c) 1997 Cambridge University Press
%% NO pdf files.  Get dvi ones read by aquamacs everywhere!

\NeedsTeXFormat{LaTeX2e}

\documentclass{psp}

\usepackage{amsmath}
\usepackage{amsfonts}
\usepackage{latexsym}
\usepackage{mathrsfs}
\usepackage[mathcal]{euscript}
\usepackage{amssymb}
\usepackage{pstricks}
\usepackage{color}                 
\definecolor{red}{cmyk}{0,1,1,0}

% See if the author has the AMS symbol fonts installed: If they have, attempt
% to use the 'amsfonts' package.

% \ifprodtf \else
%   \checkfont{msam10}
%   \iffontfound
%     \IfFileExists{amsfonts.sty}
%       {\typeout{^^JFound AMS Symbol fonts on the system, using the
%                 'amsfonts' package.^^J}%
%        \usepackage{amsfonts}%
%        \let\le=\leqslant  \let\leq=\leqslant
%
%        \let\ge=\geqslant  \let\geq=\geqslant
%      }
%       {\providecommand\mathbb[1]{\mathsf{##1}}
%        \providecommand\mathfrak[1]{\mathcal{##1}}}
%   \else
%     \providecommand\mathbb[1]{\mathsf{#1}}
%     \providecommand\mathfrak[1]{\mathcal{#1}}
%   \fi
%   \let\lessmuch=\ll
%   \providecommand\multpt{\,.\,}
%   \let\highmultpt=\cdot
% \fi

% See if the author has the AMS 'amsbsy' package installed: If they have,
% use it to provide better bold math support (with \boldsymbol).

\ifprodtf \else
  \IfFileExists{amsbsy.sty}
    {\typeout{^^JFound the 'amsbsy' package on the system, using it.^^J}%
     \usepackage{amsbsy}}
    {}

\fi

\newtheorem{theorem}{Theorem}[section]
\newtheorem{lemma}{Lemma}[section]
\newtheorem{corollary}[theorem]{Corollary}

\arraycolsep 1pt

% Define some 'Log-like' roman expressions
\newromanexpr\Hess{Hess}

% Define some shorthands for the Bibliography

%%%%%%%%%%%%%%%%%%%%%%%%%%%%%%%%%%%%%%%%%%%%%%%%%%%%%%%%%%%
%
% Maurice's definitions
%
% \DeclareFontFamily{OMS}{rsfs}{\skewchar\font'177}
% \DeclareFontShape{OMS}{rsfs}{m}{n}{%
%   <5> rsfs5
%   <6> <7> rsfs7
%   <8> <9> <10> rsfs10
%   <10.95> <12> <14.4> <17.28> <20.74> <24.88> rsfs10
% }{}
% \DeclareSymbolFont{rsfscript}{OMS}{rsfs}{m}{n}
% \DeclareSymbolFontAlphabet{\mathrsfs}{rsfscript}
% \newcommand{\scr}[1]{\mathrsfs{#1}}  
\DeclareMathOperator{\di}{diam}
\DeclareMathOperator{\card}{\#}
\def\eg{{\it{e.g.}}}
\def\ie{{\it{i.e.}}}
% %
% \def\hash{\#}
%  %
\def\DA{Diophantine approximation}
% %
\def\HD{Hausdorff dimension}
\def\hdim{\mathrm{dim_{\,H}}}
\def\HM{Hausdorff measure}

% %
 \def\cB{{\mathcal B}}
\def\cC{{\mathcal C}}
% \def\cM{{\mathcal M}}
% \def\cP{{\mathcal P}}
% \def\cS{{\mathcal S}}
% \def\Sp{\mathbb{S}}
% \def\B{{\mathbb B}}
% %
 \def\cH{{\mathcal H}}
\def\sH{{\mathscr H}}
\def\fBQ{{\mathfrak{B}_{\mathbb{H}}}}
 \def\cR{{\mathcal R}}
 \def\cRq{{\mathcal R}_{\q}}
% %
% \def\dpt{\mathfrak{p}}
% %
% \def\Lg{\lambda_{g}}
% %
\def\cL{{\mathcal L}}
 \def\cW{{\mathcal W}}
 \def\cV{{\mathcal V}} 
% \def\tW{\widetilde{\mathcal W}}
% %
\def\Ja{Jarn{\'\i}k}
\def\JB{Jarn\'{\i}k-Besi\-covitch}
% %
\def\GI{Gaussian integer}
% %
\def\K{Khintchine}

% %
\def\BVt{Beresnevich-Velani theorem}
% %
\def\KJ{Khintchine-Jarn{\'\i}k}
% %

% %
\def\Spr{Sprind{\v z}uk}
% %
% \def\kb{\mathbf{k}}
% \def\rb{\mathbf{r}}
\def\sv{\mathbf{s}}
% %
 \def\p{\mathbf{p}}
 \def\r{\mathbf{r}}
\def\pq{\p\q^{-1}}
\def\rs{\r\sv^{-1}}
 \def\q{\mathbf{q}}
\def\xb{\mathbf{x}}
\def\yb{\mathbf{y}}
\def\cb{\mathbf{c}}
% %
% \def\C{{\mathbb{C}}}
 \def\quat{{\mathbb{H}}}
 \def\Qu{\mathbb{H}}
% \def\R{{\mathbb{R}}}
% \def\T{[0,1)}
% \def\T}{{\mathbb{T}}}
% %
% \def\Z{{\mathbb{Z}}}
% %
\def\ab{\mathbf{a}}
\def\bb{\mathbf{b}}
  \def\pb{\mathbf{p}}
 \def\qb{\mathbf{q}}
%  \newcommand{\ub}{\mathbf{u}}
% %
 \newcommand{\eH}{\eta}
 \newcommand{\rh}{\rho}
% %
\def\BA{\mathfrak{B}}
\def\QBA{{\mathfrak B}_{\mathbb{H}}}
% %
% \def\vphi{\varphi}
% %
% \def\tPsi{\widetilde{\Psi}}
% \def\tpsi{\widetilde{\psi}}
% \def\trho{\widetilde{\rho}}
% \def\tvrho{\widetilde{\varrho}}
% \def\tvrhoH{\widetilde{\varrho}_{\mathbb H}}
% \def\vrho{\varrho}
% \def\vrhoH{{\varrho}_{\mathbb H}}
% %
\def\eps{\varepsilon}
 \def\vpi{\varpi}
% %
% \usepackage{color}
% \definecolor{Red}{cmyk}{0,1,1,0}
% \definecolor{Green}{cmyk}{1,0,1,0}
% \definecolor{Blue}{cmyk}{1,0,0,0}
% \definecolor{White}{cmyk}{0,0,0,0}
% %
% %
% %  Brent's
\DeclareMathAlphabet{\ams}{U}{msb}{m}{n}
\def\Z{\ams{Z}}\def\N{\ams{N}}
\def\H{\ams{H}}\def\R{\ams{R}}
\def\C{\ams{C}}\def\Q{\ams{Q}}
\def\ve{\varepsilon}
\def\hur{\mathcal H}
\def\hurat{\mathcal Q}
\def\psp{\mathbf{PSp}}
\def\r{\mathbf{r}}
\def\s{\mathbf{s}}
\DeclareMathAlphabet{\goth}{U}{euf}{m}{n}
%%%%%%%%%%%%%%%%%%%%%%%%%%%%%%%%%%%%%%%%%%%%%%%%%%%%%%%%%%%

\begin{document}

\title[Metrical Diophantine approximation for quaternions]
  {Metrical Diophantine approximation for quaternions}

\author[Maurice Dodson and Brent Everitt]{MAURICE DODSON\thanks{The
    first author is grateful to the 
    Royal Society for supporting a Visiting Fellowship to the
    University of Adelaide and its Mathematics Department for its
    hospitality.} \enskip\nobreakand\
  BRENT EVERITT\thanks{Some of the
    results of this paper were obtained while the second author was
    visiting the Institute for Geometry and its
    Applications, %Department of Mathematics,
    University of Adelaide, Australia.  He is grateful for their
    hospitality.} \\
Department of Mathematics, University of York,\addressbreak
York, YO\textup{10 5}DD, UK}

%\volume{???}
%\pubyear{2014}
%\setcounter{page}{1}
% \receivedline{Received \textup{;}
%               revised}
\maketitle

\begin{center}
Dedicated to  J. W. S. Cassels.
\end{center}

\begin{abstract}
Analogues of the classical theorems of \K, \Ja{} and
    \JB{} in the metrical theory of \DA{} are established for
    quaternions by applying results on the measure of general `lim
    sup' sets. 
\end{abstract}

\section{Introduction}\label{sec:1}

\DA\ begins with a more quantitative understanding of the density of
the rationals $\Q$ in the reals $\R$. For any real number $\xi$, one
considers rational solutions $p/q$ to the inequality
$$
\left|\xi-\frac{p}{q}\right|<\ve,
$$
where $\ve$ is a small positive number depending on $p/q$.  Dirichlet's
theorem~\cite[Chap.~XI]{HW}, where $\ve=(qN)^{-1}$ for any $N\in \N$
and a suitable positive integer $q\leqslant N$, is fundamental to the
theory.  Holding for all real numbers, it is a {\em{global}} result in
\Spr's classification of \DA~\cite[pg.~x]{Sprindzuk}, in contrast with
{\em{individual}} results, which hold for special numbers, such as the
golden ratio $\phi$, $e$, $\pi$, {\em{etc}}., and with the
{\em{metrical}} theory. The last theory uses measure theoretic ideas to
describe sets of number theoretic interest and is the setting of this
paper.

Dirichlet's theorem underpins four major theorems -- or the \emph{Four
  Peaks} -- in the metrical theory of \DA{} for $\R$. These results
are concerned with the measure (usually Lebesgue or Hausdorff) of real
numbers that infinitely often are `close' to rationals, and those
which `avoid' rationals; these are called {\em{well}} approximable and
{\em{badly}} approximable numbers respectively (definitions
are given below). The four results are \K's theorem
(Theorem~\ref{thm:KT}), two theorems of \Ja{} (Theorems~\ref{thm:JT}
and~\ref{thm:JBAR}) the celebrated \JB{} theorem
(Theorem~\ref{thm:JBT}).  Three of the peaks concern well-approximable
numbers and one badly-approximable numbers. The quantitative form of
\K's theorem (see~\cite{Schmidt64,Sprindzuk}) certainly merits peak
status as well but will not be considered here.

This basic setting can be generalised in a number of directions: one
`topological', where the reals are replaced by $\R^n$ or even
submanifolds of $\R^n$ and the nature of the \DA{} modified
appropriately; another is `geometrical' where the reals
  are replaced by limit points of a discrete group acting on
  hyperbolic space; while yet another is `algebraic', where the field $\R$ is
replaced by other fields, skew-fields or division algebras, and $\Q$
is replaced by the field of fractions of `integral' subrings.  This
paper follows the third direction: the approximation of quaternions
$\H$ by ratios of integer-like quaternions. For us, `integer-like'
will mean the Hurwitz integers $\hur$: these turn out to be the
simplest subring of $\H$ with sufficiently nice algebraic properties
-- such as a division algorithm -- for an interesting quaternionic
number theory (see~\S\ref{subsec:preliminaries}
and~\cite{ConwaySmith,HurwitzZQ}).

As far as we can determine, little has been published on quaternionic
\DA. A.~Speiser obtained an approximation constant for
irrational quaternions~\cite{Speiser1932}; his work was extended and
sharpened by A.~L.~Schmidt~\cite{ASchmidt69,ASchmidt74}.  K.~Mahler proved an
inequality for the product of Hurwitzian integral linear
forms~\cite{Mahler45} but we can find nothing explicitly on the
metrical theory.  This paper sets out to fill this gap by establishing
quaternionic analogues of the Four Peaks.  Limitations of space and
complications arising from non-associativity prevent including the
further extension to octonions and completing the picture for real
division algebras.

After some basic measure theory in~\S\ref{sec:MaD} and a brief survey
of real, complex and more general \DA\ in \S\ref{sec:4peaksinRC},
we set the stage for the quaternionic theory in~\S\ref{sec:QDA}. The
main result here is a quaternionic analogue of Dirichlet's theorem
(Theorem~\ref{thm:QDT}). The badly approximable quaternions are then
defined in~\S\ref{subsec:QBA}.  Section~\ref{sec:Psiapproxquats}
extends the fundamental Dirichlet inequality to the notion of
$\Psi$-approximability.  The ideas of resonance and near-resonance are
explained and the basic structure of the set of $\Psi$-approximable
numbers is described.

We are finally ready for the quaternionic Four Peaks
in~\S\ref{sec:4peaksinQ}.  The First Peak is the quaternionic
Khintchine theorem (Theorem~\ref{thm:qKT}).  Each of the statement and
proof falls into two cases: convergence and divergence. The
convergence case is the easier of the two and is established
in~\S\ref{sec:Kconvergent}.  The divergence case, proved
in~\S\ref{sec:divergentsum}, is much harder and requires deeper ideas,
such as ubiquity~(\S\ref{sec:ubiquity}) and the mass transference
principle (\S\ref{subsec:mtp2}).  The quaternionic 
Dirichlet's theorem (Theorem~\ref{thm:QDT}) is used
in~\S\ref{sec:divergentsum} to show that the Hurwitz rationals $\hurat$
are a ubiquitous system. This involves rather lengthy and delicate
analysis but is a prerequisite to applying the powerful
Beresnevich-Velani Theorem, established in~\cite{BVParis2009a}. This
is adapted to our needs as Theorem~\ref{thm:qbv} and used to
yield the analogue of \K's theorem. The extension to quaternions of
the quantitative form of \K's theorem is an interesting open question.

  The proof of Theorem~\ref{thm:qJT}, the quaternionic analogue of the
  \Ja's extension of \K's theorem to Hausdorff
  measure, follows similar lines and is sketched.
  Theorem~\ref{thm:qJBT}, the analogue of the Jarn\'ik-Besicovitch
  theorem, is a corollary of Theorem~\ref{thm:qJT}. Finally, some
  related ideas are used in~\S\ref{sec:BAquats} to prove
  Theorem~\ref{thm:QJTBA} on the Hausdorff measure and dimension of
  the set of badly approximable quaternions.

  A knowledge of measure theory and particularly Lebesgue and
  Hausdorff measure in $\R^k$ will be assumed.  For completeness and
  to fix notation the elements of the theory are sketched. The reader
  is referred
  to~\cite{MDAMshort,FalcGFSshort,FalcFG,Fed,MattilaGS,Rogers} for
  further details.

\section{Measure and dimension} 
\label{sec:MaD}
We consider points in subsets of general Euclidean space $\R^n$, our
primary interest of course being in $\R^4$, the underlying set of
$\H$.  When defined, the Lebesgue measure of a set $E$ will be denoted
by $|E|$. The set $E\subseteq F \subseteq \R^n$ is said to be
{\it{null}} if $|E|=0$ and {\it{full in $F$}} if its complement
$ F\,\setminus \, E$ is null (reference to $F$ will be
omitted when there is no risk of ambiguity).  Hausdorff measure and
Hausdorff dimension are much more general and can be assigned to any
set.  In particular they can be applied to different null sets (also
referred to as {\em{exceptional sets}}), so offering a possible means
of distinguishing between them.

A {\it{dimension function}} $f\colon [0,\infty)\to [0,\infty)$ is a
generalisation of the usual notion of dimension; $m$-dimensional
Lebesgue measure corresponds to $f(t)=t^m$.  More generally, the
function $f$ will be taken to be increasing on $[0,\infty)$, with
$f(x)>0$ for $x>0$ and $f(x)\to 0$ as $x\to 0$. For convenience $f$
will be assumed to be continuous, so that $f(0)=0$.  The
{\em{Hausdorff $f$-measure}} $\sH^f$ (or generalised Hausdorff measure
with dimension function $f$) is defined in terms of a $\eps$-cover
$\cC_\eps=\{C_i\}$ of a set $E$, so that $ E\subseteq \bigcup_{i=1}^\infty
C_i, $ where $ \di(C_i) \leqslant \eps $.  The measure $\sH^f(E)$, 
defined as
\begin{equation}
\label{eq:Hfmeasure}
\sH^f(E): = \lim_{\eps\to 0} \inf \{\sum_i f(\di(C_i))  
\colon   C_i\in \cC_\eps \}, 
\end{equation} 
is a Borel measure and regular on Borel
sets~\cite{FalcGFSshort,MattilaGS}. Hausdorff $s$-measure $\sH^s$ corresponds
to the function $f$ being given by $f(t)=t^s$, where $0\leqslant s <\infty$.
When $s=m$ a non-negative integer, Hausdorff $s$-measure is comparable
with Lebesgue's $m$-dimensional measure. Indeed
\begin{equation}
  \label{eq:comparable}
  \sH^m(E) = 2^m |B(0,1)|^{-1} |E|,
\end{equation}
where $B(0,1)$ is the unit $m$-dimensional ball, and the two measures
agree when $m=1$~\cite[pg.~56]{MattilaGS}.  The 4-dimensional Lebesgue measure
(4-volume) of the $4$-ball $B^{(4)}(\xi,r)$ of radius $r$ (and
diameter $2r$) centred at $\xi$ is given by
\begin{equation}
  \label{eq:4ballmeasure}
|B(\xi,r)|=\frac{\pi^2}{2} r^4 \asymp r^4.  
\end{equation}

 For each set $E$ the {\em{Hausdorff
    dimension}} $\hdim E$ of $E$ is defined by
$$
\hdim E := \inf \{s \in \R \colon \sH^s(E) = 0 \},
$$
so that
\begin{equation*}
\sH^s(E) =   \begin{cases}  \infty, & s < \hdim\, E, \\
                   0,     & s > \hdim\, E. 
  \end{cases}
  \end{equation*}
  Thus the dimension is that critical value of $s$ at which $\sH^s(E)$
  `drops' discontinuously from infinity. Hausdorff dimension has the
  natural properties of dimension. For example, if $E\subseteq E'$,
  then $\hdim \, E\leqslant \hdim \, E'$; and an open set, or a set of
  positive Lebesgue measure in $\R^n$, has maximal or full Hausdorff
  dimension $n$.  Different null sets can have different Hausdorff
  dimension and so can be distinguished (\eg, Theorem~\ref{thm:JBT}).

  The Hausdorff $s$-measure at the critical point can be $0, \, \infty$
  or any intermediate value.  Methods for determining the Hausdorff
  dimension, such as the regular systems given in~\cite{BS} or the 
  ubiquitous systems of~\cite{DRV90a}, do not specify the $s$-measure
  at the critical point in general and a deeper approach is usually
  needed (see Theorem~\ref{thm:BV}).  In the case of lim sup sets,
  such as the $\Psi$-approximable numbers defined below, the measure
  of a natural cover arising from the definition leads to a sum which
  determines the Hausdorff measure at the critical point.

\section{Real and complex metrical \DA }
\label{sec:4peaksinRC}

Some of the salient features of metrical \DA{} for the real and complex
numbers are set out to aid comparison with the quaternions. 

\subsection{Metrical \DA{} for real numbers}
\label{subsec:mDAforR}

Historically,
metrical \DA{} began with Borel's study of the set
\begin{equation*}
  W_v:=\left\{\xi\in\R\colon \left|\xi - \frac{p}{q}\right|<q^{-v}
    {\text{ for infinitely many }} p \in \Z,\, q\in\N \right\},  
\end{equation*}
where $W_v=\R$ for $v\leqslant 2$ and is null for $v>2$~\cite{Borel12}.
More generally, the function $x\mapsto x^{-v}$ is replaced by an
{\it{approximation function}} $\Psi$, defined here to be a function
$\Psi\colon (0,\infty)\to (0,\infty)$ with $\Psi(x)\to 0$ as
$x\to\infty$.  One studies the Lebesgue measure $|W(\Psi)|$ of the set
\begin{equation*}
\label{eq:WPsi}
W(\Psi):=\left\{\xi\in\R\colon \left|\xi - \frac{p}{q}\right|<\Psi(q)
{\text{ for infinitely many }}  p \in \Z, \, q\in\N \right\}
\end{equation*}
of {\em{$\Psi$-approximable numbers}}. Unless otherwise stated, the
approximation function $\Psi$ will be taken to be decreasing
 (which we will take to mean non-increasing).
 
 For technical reasons, it is often better to work within a compact
 set and we choose the subset $V(\Psi):=W(\Psi)\cap [0,1]$. There is no
 loss in generality since $\R$ is the union of integer translates of
 $[0,1]$, the integers $\Z$ are a null set and Lebesgue measure is
 translation invariant, allowing the measure of $W(\Psi)$ to be
 deduced from that of $V(\Psi)$.  In particular, $V(\Psi)$ is full
 (in $[0,1]$) iff $A(\Psi)$ is full (in $\R$).  
Four of the principal results -- the
 Four Peaks -- in the theory for $\R$ now follow.

\subsection{The Four Peaks in the theory of real metrical \DA.} 
\label{subsection:classicalmDA}

\paragraph{The First Peak: Khintchine's theorem for $\R$.}
In 1924 Khintchine introduced a `length' criterion that gave a
strikingly simple and almost complete answer to the `size' of
$W(\Psi)$~\cite{Kh24}, extended to $\R^n$ (simultaneous Diophantine
approximation) in~\cite{Kh26}. The conditions on $\Psi$ have been
improved since (see for example~\cite[Ch.~VII]{Casselshort},
\cite[Ch.~1]{Sprindzuk} and~\S\ref{subsec:sdaR4}) to give the
following result for $\R$:
\begin{theorem}  
  \label{thm:KT}
    Let $\Psi\colon (0,\infty)\to (0,\infty)$. Then
  \begin{equation*}
    W(\Psi) \ and \ V(\Psi)\ are \ \begin{cases}
    \text{null} &\hspace{-.05in}\text{when } \sum_{m=1}^\infty m\,\Psi(m) <\infty, \\
      \text{full}   &\hspace{-.05in}\text{when $\Psi$ is decreasing and }  
      \sum_{m=1}^\infty m\,\Psi(m) =\infty \\
    \end{cases}   
  \end{equation*}
\end{theorem}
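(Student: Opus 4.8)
The plan is to treat the two cases separately, since the hypothesis that $\Psi$ is decreasing is needed only for divergence. Throughout, write $A_q := \bigcup_{0\le p\le q}\bigl(B(p/q,\Psi(q))\cap[0,1]\bigr)$, so that $V(\Psi)=\limsup_{q\to\infty}A_q$ and $W(\Psi)=\bigcup_{n\in\Z}\bigl(n+V(\Psi)\bigr)$. For the convergence case, observe there are at most $q+1$ fractions $p/q$ in $[0,1]$, each contributing an interval of length $2\Psi(q)$, so $|A_q|\le 2(q+1)\Psi(q)\ll q\Psi(q)$. Hence $\sum_q|A_q|<\infty$ whenever $\sum_m m\Psi(m)<\infty$, and the convergence Borel--Cantelli lemma gives $|V(\Psi)|=0$; since Lebesgue measure is translation invariant, $W(\Psi)$ is a countable union of null sets and so is null. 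No monotonicity of $\Psi$ is used here.

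The divergence case is the substantial half, and the obstacle is that the $A_q$ are far from independent: fractions sharing a denominator, or with denominators having a large common factor, produce heavily overlapping intervals, so a naive second-moment estimate fails. I would proceed via the divergence form of the Borel--Cantelli lemma (the Erd\H{o}s--R\'enyi / Paley--Zygmund lemma): if $\sum_q|A_q|=\infty$ and the $A_q$ are quasi-independent on average, that is
\[
\sum_{q,q'\le Q}|A_q\cap A_{q'}|\;\ll\;\Bigl(\sum_{q\le Q}|A_q|\Bigr)^{2}
\quad\text{for infinitely many }Q,
\]
then $|\limsup_q A_q|>0$. Two inputs must be supplied. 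First, a lower bound $|A_q|\gg q\Psi(q)$: restricting to reduced fractions $p/q$ (there are $\varphi(q)\asymp q$ of them, and since $\Psi$ is decreasing one may further thin the denominators to, say, primes in a dyadic block without destroying the divergence of $\sum_m m\Psi(m)$), the corresponding intervals are disjoint, yielding $|A_q|\gg q\Psi(q)$ and hence $\sum_q|A_q|=\infty$. Second, for $q\ne q'$ one counts overlaps: $B(p/q,\Psi(q))\cap B(p'/q',\Psi(q'))\ne\emptyset$ forces $|pq'-p'q|<qq'(\Psi(q)+\Psi(q'))$, and summing the number of such integer solutions over admissible $p,p'$ gives $|A_q\cap A_{q'}|\ll q\Psi(q)\,q'\Psi(q')$ plus a manageable lower-order term, where monotonicity of $\Psi$ is what controls the error. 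Feeding these estimates into the lemma gives $|V(\Psi)|>0$.

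Finally, to upgrade positive measure to full measure: the counting estimates above are uniform enough to apply verbatim inside any subinterval $I\subseteq[0,1]$ of positive length, so in fact $|V(\Psi)\cap I|>0$ for every such $I$. If $V(\Psi)$ were not full, its complement would have a Lebesgue density point, contradicting $|V(\Psi)\cap I|>0$ for all small intervals $I$ around that point; hence $|V(\Psi)|=1$, and applying this to each integer translate shows $W(\Psi)$ is full. An alternative route to the divergence case is to verify that the rationals form a regular system in the sense of Baker--Schmidt (equivalently, a ubiquitous system) and invoke the general zero--full criterion for $\limsup$ sets, or, classically, to argue via the ergodic properties of the Gauss continued-fraction map. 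I expect the quasi-independence estimate --- the uniform upper bound on $|A_q\cap A_{q'}|$ together with the harmless thinning needed for the lower bound on $\sum_q|A_q|$ --- to be the main obstacle.
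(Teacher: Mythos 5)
Your convergence half is fine and is the standard covering/Borel--Cantelli argument (note only that the paper itself does not prove Theorem~\ref{thm:KT}: it quotes it from the literature, and proves instead the quaternionic analogue, Theorem~\ref{thm:qKT}, whose divergence half goes through ubiquity and the Beresnevich--Velani theorem --- i.e.\ precisely the ``alternative route'' you relegate to a closing remark). The genuine gap is in your divergence half, at exactly the step you flag as the main obstacle. The correct pairwise bound for reduced fractions is
\[
|A_q\cap A_{q'}|\;\ll\; qq'\,\Psi(q)\Psi(q')\;+\;\gcd(q,q')\,\min\bigl(\Psi(q),\Psi(q')\bigr),
\]
and the second term is \emph{not} a manageable lower-order term, even for decreasing $\Psi$: take $\Psi(m)=1/(m^{2}\log m)$, so that $\sum m\Psi(m)=\infty$. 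Then $\sum_{q\le Q}|A_q|\asymp\log\log Q$, while the contribution of the $\gcd$-terms to $\sum_{q,q'\le Q}|A_q\cap A_{q'}|$ (sum over $q=da$, $q'=db$) grows at least like $\sqrt{\log Q}\,\log\log Q$, which swamps $\bigl(\sum_{q\le Q}|A_q|\bigr)^{2}\asymp(\log\log Q)^{2}$. So quasi-independence on average fails for the full family $\{A_q\}$ and the Erd\H{o}s--R\'enyi lemma yields nothing. Your proposed repair --- thinning the denominators to primes in dyadic blocks --- does kill the $\gcd$-term but can destroy the divergence hypothesis: for the decreasing function $\Psi(m)=1/(m^{2}\log m\log\log m)$ one has $\sum_m m\Psi(m)=\infty$ but $\sum_{p\ \mathrm{prime}}p\,\Psi(p)<\infty$. (Also, $\varphi(q)\asymp q$ is false pointwise; only an average over dyadic blocks gives $\sum\varphi(q)\Psi(q)\asymp\sum q\Psi(q)$.)

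Thus the heart of the divergence case cannot be settled by the naive second-moment computation plus a cosmetic thinning; one needs a genuinely different mechanism: Khintchine's continued-fraction argument, Cassels' treatment (via his zero--one law together with a more careful choice of the approximating family), or the regular-system/ubiquity machinery, which is what the present paper uses for its quaternionic Four Peaks. A smaller point: in your final upgrade from positive to full measure, having $|V(\Psi)\cap I|>0$ for every interval $I$ does not preclude the complement from having density points; you need the uniform local proportion $|V(\Psi)\cap I|\gg|I|$ (which the local Borel--Cantelli estimate would supply if the quasi-independence step worked), or alternatively Cassels' zero--one law, which makes any positive-measure conclusion suffice.
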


Note that $W(\Psi)$ being full implies the weaker statement that
$|W(\Psi)|=\infty$, while $|V(\Psi)|=1$ is equivalent to $W(\Psi)$
being full.  Other approximation functions can be used: \eg,
$\psi(x)=x\Psi(x)$, where $\|\xi\|$ is the distance of $\xi$ from the
nearest integer, which allowing the inequality to be expressed in the
concise form $\|q\xi\|<\psi(q)$ (\eg,~\cite{MDAMshort,Casselshort}),
with the numerator $p$ suppressed, while Dennis Sullivan
in~\cite{Sullivan82} uses $a(x)=x^2\Psi(x)$ (he also uses an
equivalent integral criterion instead of the sum $\sum_{m\in \N}
a(m)/m$).  The subset $W'(\Psi)\subset W(\Psi)$ of points $\xi$
approximated by rationals $p/q$ with $p,q$ coprime will not be
considered.

It is evident that the value of the sum 
\begin{equation*}
  \label{eq:critsumL1}
  \sum_{m=1}^\infty m\,\Psi(m)
\end{equation*}
in Theorem~\ref{thm:KT} determines the Lebesgue measure of $W(\Psi)$
and so will be called a {\em{critical}}  sum for
$W(\Psi)$.  Note that if the above critical sum
converges, $\Psi$ must converge to 0 and moreover there is no need in
this case for $\Psi$ to be monotonic.  \K's theorem is related to the
`pair-wise' form of the Borel-Cantelli Lemma
(see~\cite{BDV06,md09a,HarmanMNT}) which also falls into two cases
according as a certain sum of probabilities converges or diverges.

  The interpretation of the rationals as the orbit of a point at
  infinity under the action of the modular group provides a powerful
  geometrical approach to \DA{} in the reals
  (\eg,~\cite{Rankin57,Series85}) and more
  generally~\cite{AhlforsMT,BeardonGDG,NichollsETDG,Patterson76a}.
  It was the basis of Sullivan's proof~\cite[Th.~3]{Sullivan82} of a
  slightly stronger form of \K's theorem and more
  (see~\S\ref{subsection:meDAinC} below).  

\paragraph{The Second Peak: \Ja's Hausdorff $f$-measure theorem for $\R$.}
In 1931, \Ja\ obtained \HM{} results for simultaneous \DA{} in $\R^n$,
providing a more general measure theoretic picture of the sets
involved~\cite{Ja31} (see also~\cite[pg.~3]{BDV06}).  This
did not include Lebesgue measure which is excluded by a growth
condition on $f$ at 0.  Although originally proved for $\R^n$, \Ja's
result is again stated for the case $n=1$, with some unnecessary
monotonicity conditions omitted.

\begin{theorem}
  \label{thm:JT} 
Let $f$ be a dimension function such
  that 
$f(x)/x\to \infty$ as $x\to 0$ and 
$f(x)/x$ decreases as $x$ increases.  Then
  \begin{equation*}
    \sH^f(W(\Psi))=\sH^f(V(\Psi))=
    \begin{cases}
      0 &\hspace{-.05in}\text{when } \sum_{m=1}^\infty m \,f(\Psi(m)) <\infty, \\
      \infty  &\hspace{-.05in}\text{when $\Psi$ is decreasing and }
\sum_{m=1}^\infty m \, f(\Psi(m)) =\infty.
    \end{cases}
  \end{equation*} 
\end{theorem}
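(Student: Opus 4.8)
The plan is to prove Theorem~\ref{thm:JT} by a classical two-part argument, treating the convergence case by a direct covering estimate and the divergence case by reduction to a known theorem via the regular-systems / ubiquity machinery, since the Hurwitz rationals (and in the real case here, the ordinary rationals) form a regular system.

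\textbf{Convergence case.} Suppose $\sum_{m=1}^\infty m\,f(\Psi(m))<\infty$. For each $N$, the set $W(\Psi)$ is contained in the tail $\bigcup_{m\ge N}\bigcup_{p}B\bigl(p/m,\Psi(m)\bigr)$, where for $\xi$ restricted to a fixed bounded interval (say $[0,1]$, which suffices by translation invariance of both measures and the fact that $\Z$ is $\sH^f$-null — this uses $f(x)/x\to\infty$ only mildly, or rather just that $f$ is a dimension function so $f(0)=0$) the index $p$ ranges over $O(m)$ values. This gives a cover of $V(\Psi)$ by balls of diameter $2\Psi(m)\le 2\Psi(N)\to 0$, so it is an admissible $\eps$-cover for the Hausdorff $f$-measure with $\eps\to 0$ as $N\to\infty$. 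Hence
\begin{equation*}
\sH^f(V(\Psi)) \le \liminf_{N\to\infty}\sum_{m\ge N} c\,m\, f\bigl(2\Psi(m)\bigr),
\end{equation*}
and since $f$ is a dimension function one controls $f(2\Psi(m))$ by a constant multiple of $f(\Psi(m))$ (using that $f(x)/x$ is decreasing, so $f(2x)\le 2 f(x)$); the convergence of the series forces the tail to $0$, giving $\sH^f(V(\Psi))=0$ and then $\sH^f(W(\Psi))=0$ by countable subadditivity over the integer translates. First I would isolate the elementary sub-lemma $f(2x)\le 2f(x)$ from the hypothesis that $f(x)/x$ decreases.

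\textbf{Divergence case.} Suppose $\Psi$ is decreasing and $\sum_{m=1}^\infty m\,f(\Psi(m))=\infty$. Here I would invoke the fact that the rationals $\{p/q\}$, weighted appropriately, form a regular system (in the sense of~\cite{BS}) with the right counting function — this is the real-line prototype of the ubiquity statement the paper will prove for the Hurwitz rationals in~\S\ref{sec:divergentsum}. Given a regular system, the Hausdorff-measure divergence result (the Beresnevich--Velani-type theorem, i.e.\ Theorem~\ref{thm:BV} / the mass transference principle referenced in the excerpt) yields directly that $\sH^f(V(\Psi))=\infty$ as soon as the critical sum $\sum m\,f(\Psi(m))$ diverges, provided $f(x)/x$ is monotonic and $f(x)/x\to\infty$ (this last condition is exactly what prevents the statement from collapsing to the Lebesgue case and is used to convert the Lebesgue-divergence input of Khintchine's theorem, Theorem~\ref{thm:KT}, into full $f$-measure via mass transference). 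Then $\sH^f(W(\Psi))\ge\sH^f(V(\Psi))=\infty$.

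\textbf{Main obstacle.} The delicate point is the divergence case: one must either (a) verify that the rationals form a regular/ubiquitous system with the correct density exponent and then feed the divergent critical sum into the mass transference principle, or (b) run the Jarn\'ik-type Cantor-set construction by hand — choosing at each stage of the construction enough well-separated rationals $p/q$ with $q$ in a suitable dyadic-type block so that the surviving set carries positive $\sH^f$-mass, then iterating and letting the construction parameter go to infinity. Reconciling the monotonicity hypotheses on $f(x)/x$ with the summation-by-parts needed to pass between $\sum m f(\Psi(m))$ over all $m$ and $\sum$ over dyadic blocks is the fiddly analytic step; the hypothesis that $f(x)/x$ decreases is precisely what makes this bookkeeping work, and I would flag that this is the only place monotonicity of $\Psi$ is genuinely needed. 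Since the paper explicitly says the quaternionic analogue (Theorem~\ref{thm:qJT}) follows "similar lines" to the Khintchine case and is only sketched, I would keep this proof brief, citing Theorem~\ref{thm:BV} and~\cite{BDV06} for the divergence half rather than reproducing the construction.
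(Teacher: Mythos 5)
Your proposal is correct and follows essentially the same route as the paper: the paper states Theorem~\ref{thm:JT} as Jarn\'ik's classical result and proves its quaternionic analogue (Theorem~\ref{thm:qJT}) by exactly your two-part scheme --- the convergence half by the natural cover estimate with $f(2x)\le 2f(x)$ drawn from the monotonicity of $f(x)/x$, and the divergence half by ubiquity of the resonant points fed into the Beresnevich--Velani theorem (Theorem~\ref{thm:BV}), with the Mass Transference Principle applied to Khintchine's theorem noted as an alternative. The only cosmetic point is that in the convergence case you should justify admissibility of the tail cover by $\Psi(m)\to 0$ (which follows from convergence of the critical sum) rather than by monotonicity of $\Psi$, which is not assumed there.
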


The condition $f(x)/x\to \infty$ as $x\to 0$ means that \Ja's theorem
does not imply Khintchine's theorem since the dimension function $f$
for 1-dimensional Lebesgue measure is given by $f(x)=x$.  However,
using the idea of ubiquity (explained below in~\S\ref{sec:ubiquity}),
V.~V.~Beresnevich and S.~L.~Velani have united Theorem~\ref{thm:KT}
and \Ja's theorem into a single general `\K-\Ja'
theorem~\cite[\S2.3]{BVParis2009a}.  The sum $\sum_{m=1}^\infty m
\,f(\Psi(m))$ is the corresponding critical sum.

\paragraph{The Third Peak: the \JB{} theorem for $\R$.}
In 1929 \Ja~\cite{Ja28,Ja29} obtained the \HD{} of the set $W_v$,
proved by Besicovitch independently in 1934~\cite{Bes34}.  This result
is readily seen as a consequence of \Ja's result above by putting
$f(x)=x^{s}$ and $\Psi(x)=x^{-v}$, $v>0$.

 \begin{theorem}
\label{thm:JBT}
 Let $v\geqslant 0$. Then the \HD{} of $W_v$ is given by
 \begin{equation*}
    \hdim W_v =  \hdim V_v = 
    \begin{cases}
      1  & \text{ when } v\leqslant 2, \\
      \dfrac{2}{v} & \text{ when } v\geqslant 2.
    \end{cases}
  \end{equation*}
\end{theorem}
When $1/\Psi$ has lower order $\lambda(1/\Psi):= \liminf_{N\to\infty}
(\log 1/\Psi(N))/(\log N )$, then
\begin{equation*}
\label{eq:dimlo}
  \hdim W(\Psi) =  \hdim V(\Psi) = 
\begin{cases}
     \ \ \ \ 
 1  & \text{ when } \lambda(1/\Psi)\leqslant 2, \\
      \dfrac{2}{\lambda(1/\Psi)} & \text{ when } \lambda(1/\Psi) \geqslant 2
    \end{cases}
\end{equation*}
(see~\cite{mmd92, DRV90a}).

\paragraph{The Fourth Peak: \Ja's theorem for ${\mathfrak B}$, the set
  of badly approximable numbers.} A real number $\beta$ is said to be
{\em{badly approximable}} if there exists a constant $c=c(\beta)$ such
that
      \begin{equation*}
              \left|\beta - \frac{p}{q}\right|\geqslant \frac{c}{q^2}
      \end{equation*}
      for all rationals $p/q$. The set of badly approximable numbers
      is denoted by $\BA$ and can be regarded as a `lim inf'
      set~\cite[pg.~1]{FalcGFSshort}. In his pioneering paper of 1928, 
      \Ja{} established the Lebesgue measure and Hausdorff dimension
      of $\BA$~\cite{Ja28}.
  \begin{theorem} 
  \label{thm:JBAR} 
    The set $\BA$ is null with full \HD, \ie, $|\BA|=0 $ and $\hdim
    ({\mathfrak B}) = 1$.
   \end{theorem}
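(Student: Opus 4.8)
The plan is to establish the two assertions of Theorem~\ref{thm:JBAR} separately, since they are genuinely different in character: $|\BA|=0$ is a measure-zero statement that follows from the classical metrical theory, while $\hdim(\BA)=1$ requires constructing a large Cantor-like subset of $\BA$.

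First I would dispense with $|\BA|=0$. The key observation is that $\BA = \bigcup_{c>0} \BA_c$, where $\BA_c$ is the set of $\beta$ with $|\beta - p/q| \ge c/q^2$ for all rationals $p/q$, and it suffices to show each $\BA_c$ is null. But $\BA_c$ is disjoint from $W(\Psi)$ for $\Psi(q) = c/q^2$ up to the finitely-many-times/infinitely-many-times distinction; more carefully, if $\beta \in \BA_c$ then $\beta$ fails $|\beta - p/q| < c q^{-2}$ for \emph{all} $p/q$, so $\beta \notin W(\Psi_c)$ with $\Psi_c(q) = cq^{-2}$. Now $\sum_m m\,\Psi_c(m) = c\sum_m m^{-1} = \infty$ and $\Psi_c$ is decreasing, so by the divergence case of Theorem~\ref{thm:KT} the set $W(\Psi_c)$ is full, hence its complement --- which contains $\BA_c$ --- is null. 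Thus $|\BA_c|=0$ for every $c>0$, and since $\BA$ is a countable union (over $c = 1/n$, $n \in \N$) of null sets, $|\BA|=0$.

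For the dimension statement, the inequality $\hdim(\BA)\le 1$ is immediate since $\BA\subseteq\R$. The substance is the lower bound $\hdim(\BA)\ge 1$, equivalently $\hdim(\BA)=1$. The standard route is via continued fractions: a real number is badly approximable if and only if its continued fraction partial quotients are bounded, so for each integer $N\ge 2$ the set $F_N$ of irrationals whose partial quotients all lie in $\{1,\dots,N\}$ is contained in $\BA$. Each $F_N$ is a self-similar Cantor set defined by the contractions coming from the generators $\begin{pmatrix} 0 & 1 \\ 1 & a \end{pmatrix}$, $1\le a\le N$, of the Gauss map, and one shows $\hdim(F_N)\to 1$ as $N\to\infty$ --- either by a direct mass-distribution (Frostman) argument producing a measure on $F_N$ with the right Hölder exponent, or by invoking the known asymptotics $\hdim(F_N) = 1 - 6/(\pi^2 N) + o(1/N)$. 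Either way $\hdim(\BA)\ge \sup_N \hdim(F_N) = 1$.

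I expect the genuinely technical step to be the lower bound for $\hdim(F_N)$: controlling the sizes and separations of the cylinder sets $\{[0;a_1,\dots,a_k,\dots] : 1\le a_i \le N\}$ uniformly enough to apply the mass distribution principle. This requires the standard estimates on denominators $q_k$ of convergents (bounded ratios $q_k/q_{k-1}$ when partial quotients are bounded, and the comparison $|I(a_1,\dots,a_k)| \asymp q_k^{-2}$ for the cylinder interval) together with a counting of cylinders at each level. Since the excerpt already grants access to the machinery of regular and ubiquitous systems and the mass transference principle, an alternative is to present $\BA$ directly via a Cantor-set construction meeting these frameworks; in the body of the paper, though, for the classical real case it is cleanest to cite \cite{Ja28} for the construction and merely indicate the continued-fraction mechanism, reserving the detailed Cantor-set work for the quaternionic analogue Theorem~\ref{thm:QJTBA}.
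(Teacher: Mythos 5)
Your proposal is correct, but note that the paper itself offers no proof of Theorem~\ref{thm:JBAR}: it is stated as a classical peak and attributed to \Ja's 1928 paper \cite{Ja28}, so the meaningful comparison is with the method the paper uses for its quaternionic analogue, Theorem~\ref{thm:QJTBA} in \S\ref{sec:BAquats}. Your measure-zero argument coincides with that one: decompose $\BA=\bigcup_n \BA_{1/n}$ and place each $\BA_{c}$ inside the complement of the full set $W(\Psi_c)$, $\Psi_c(q)=cq^{-2}$, furnished by the divergence half of Theorem~\ref{thm:KT} (the paper runs exactly this Khintchine-based argument, with Theorem~\ref{thm:qKT}, to show $\fBQ$ is null). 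For the dimension lower bound, however, you take the continued-fraction route — bounded partial quotients, the Cantor sets $F_N$, and $\hdim F_N\to 1$ — which is essentially \Ja's original mechanism and is specific to the real line; for this purpose \Ja's own crude lower bounds on $\hdim F_N$ suffice, and the sharp asymptotic $1-6/(\pi^2N)+o(1/N)$ you quote (due to Hensley, much later) is more than is needed. The paper, by contrast, obtains full dimension for the quaternionic set by verifying the ball-counting and separation hypotheses (via Lemma~\ref{lem:distdiff}) of the Kristensen--Thorn--Velani theorem, Theorem~\ref{thm:gBA}, precisely because continued fractions are unwieldy beyond $\R$. So your approach is the more elementary and self-contained one for the real case, while the metric-space framework the paper favours is what transfers to $\C$ and $\H$.
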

The strengthening of this result by W.~M.~Schmidt, who showed that
   $\BA$ was a `winning set' in a certain game~\cite{Schmidt66}, will
   not be considered for quaternions.

\subsection{Metrical \DA{} for the complex numbers}
\label{subsection:meDAinC}

Approximating complex numbers by ratios of \GI s $\Z[i]$, a half way
house to approximating quaternions by ratios of Lipschitz or Hurwitz
integer quaternions, was studied by Hermite and Hurwitz in the 19th
century~\cite[Chapter~IV,\S~1]{Koksma}. Continued fractions for
complex numbers, so simple and effective for real numbers, turn out to
be much more difficult than the real
case~\cite{Casselshort,HW,SchmidtDA,Ford1925}.  In the 1950s, Farey
sections for complex numbers, analogous to Farey fractions for real
numbers, were developed by Cassels, Ledermann and Mahler, who carried
out a detailed study~\cite{clm} of a programme sketched out by
Hurwitz~\cite[\S8]{Hurwitz1891}; their work was simplified and
extended by LeVeque~\cite{leveque1952}.  A.~L.~Schmidt developed a
natural and effective approach in~\cite{ASchmidt67,ASchmidt75a},
subsequently extended to the even more difficult case of
quaternions~\cite{ASchmidt69,ASchmidt74}.

Each of the Four Peaks has an analogue in the complex numbers. That of
Khintchine's theorem was by proved by LeVeque~\cite{leveque1952}, who
combined Khintchine's continued fraction approach with ideas from
hyperbolic geometry.  Later Patterson, Sullivan and others made full
use of groups acting on hyperbolic space to prove \DA{} results in
more general settings.  Sullivan established a \K{} theorem for \DA{}
in the imaginary quadratic fields $\Q(\sqrt{-d})$, where $d$ is a
positive non-square integer~\cite[Theorem~1]{Sullivan82},
corresponding to the Bianchi groups.  In the case $d=1$, the field is
the complex numbers, corresponding to the Picard group, and Theorem~1
in~\cite{Sullivan82} reduces to the complex analogue of \K's theorem.

The Mass Transference Principle (see~\S\ref{subsec:mtp2})
  could be applied to the complex analogue of \K's theorem to deduce
  the complex analogue of Theorem~\ref{thm:JT} (indeed more general
  analogues involving Bianchi groups could be deduced from the more
  general analogues of \K's theorem).
The complex \JB{} theorem and a stronger form of \Ja's
Theorem for badly approximable complex numbers were also proved
in~\cite{dk04a}, using respectively ubiquity (Theorem~6.1; see
also~\cite[Cor.~7]{BDV06}) and the $(\alpha,\beta)$
games of  W.~M.~Schmidt~\cite{Schmidt66} (Theorem~5.2).

\subsection{Generalisations}
\label{subsec:generalisations}

The above results, with appropriate modifications, hold for
simultaneous \DA{} and more generally for
systems of linear forms (where the \K-Groshev theorem takes the place
of \K's theorem)~\cite{HDSV97,Schmidt69,Sprindzuk}.  The \K-Groshev
theorem was extended to non-degenerate manifolds in the case of
convergence by Beresnevich, D.~Y.~Kleinbock and G.~A.~Margulis
in~\cite{bkm01,vvb02} and in the case of divergence by the preceding
authors and V.~I.~Bernik in~\cite{bbkm02}.  The idea of
ubiquity~\cite{DRV90a}, which is closely related to regular systems,
has been extended by Beresnevich, H.~Dickinson and Velani to lim sup
sets in compact metric spaces supporting a suitable non-atomic measure
to create a broad unifying theory~\cite{BDV06,BV06, BVParis2009a}.  In
particular, the results in~\cite{BDV06} imply that the measure in the
\BVt~\cite[Th.~3]{BVParis2009a} covers both Lebesgue and Hausdorff
measure and will be applied to establish the first three of the
quaternionic Four Peaks.

The approach using discrete group actions on hyperbolic space for $\R$
and $\C$, already alluded to above, leads naturally to the more
general setting of Kleinian group actions on hyperbolic space
(\cite{BJ97} has a comprehensive list of references).  Beresnevich,
Dickinson \& Velani have established metrical \DA{} results for more
general Kleinian group analogues of the first three of the Four
Peaks~\cite{BDV06}. These specialise to metrical \DA{} results for
real and complex numbers, corresponding to the modular and Picard
group respectively.  The quaternionic case would correspond to the
group $\psp_{2,1}(\hur)$ but different normalisations require
reconciling and the proofs would also require a knowledge of the
theory of discrete groups acting on (quaternionic) hyperbolic space.
A more direct and less abstract approach is taken in this paper.

  In a continuation of~\cite{BV06}, S.~Kristensen, R.~Thorn \&
  Velani~\cite{ktv06} extend the definition of a badly approximable
  point to a metric space.  This allows the metrical
  structure of $\QBA$, the quaternionic analogue of badly approximable
  points, to be read off once a few geometric conditions are verified
  (see~\S\ref{subsec:QBA}). It could  also be possible to
    use the equivalence of badly approximable points and `bounded'
    orbits (for details see~\cite{DaniNTDS}).  `Divergent' orbits
    correspond to well-approximable points but the results are less
    precise~\cite{Dani85}.

\section{Quaternionic \DA} 
\label{sec:QDA} 
We begin our study of quaternionic \DA{} by identifying the
appropriate analogues of the classical case and then proving an
analogue of Dirichlet's theorem.

\subsection{Preliminaries on quaternionic arithmetic}
\label{subsec:preliminaries}

The skew field $\Qu$ of quaternions consists of the set 
\begin{equation*}
 \{\xi=a+ bi + cj +dk\colon a,b,c,d\in \R\},
\end{equation*}
subject to $i^2=j^2=k^2=ijk=-1$ and $i,j,k$ anticommuting: $ij=-ji,
jk=-kj$ and $ik=-ki$.  The norm of a quaternion $\xi$ is
  taken to be the usual Euclidean norm 
\begin{equation*}
   |\xi|_2:= (\xi\overline{\xi})^{1/2}=
(|a|^2+ \dots  +|d|^2)^{1/2},
\end{equation*}
where $\overline{\xi}= a-bi-cj-dk$.  This norm is multiplicative,
with $|\xi\xi'|_2 = |\xi|_2 |\xi'|_2 = |\xi'|_2 |\xi|_2=|\xi'\xi|_2 $
for $\xi,\xi'\in\Qu$ (in~\cite[\S\S20.6--20.8]{HW}
and~\cite{HurwitzZQ}  `norm' is used in a different sense, with
$N(\xi)=|\xi|_2^2$). 
  When convenient, we will write $\xi=a+ bi + cj +dk
=(a,b,c,d)$ and $a=\Re(\xi)$.

There are 24 multiplicative units in $\H$:
\begin{equation*}
\pm 1,\pm i,\pm j, \pm k\text{ and }
\pm\frac{1}{2}+\pm\frac{1}{2}i+\pm\frac{1}{2}j+\pm\frac{1}{2}k,
\end{equation*}
forming the vertices of a regular $24$-cell in $\R^4$.

The simplest-minded notion of \emph{integers\/} in $\H$ is that of the
Lipschitz integers $\cL=\Z[i,j,k]=\Z+i\Z+j\Z+k\Z \cong \Z^4$.
However, this choice has a number of shortcomings: it does not include
all the $\H$-units and is not a Euclidean domain, as the centre of a
$4$-dimensional cube has Euclidean distance $1$ from the closest
integral points (to be an integral domain, the distance of a
quaternion to the closest integral point should always be $<1$).  For
these reasons, the usual choice for quaternionic integers is the set
$\hur$ consisting of the quaternions $a+bi+cj+dk$, where either all of
$a,b,c,d\in\Z$ or all $a,b,c,d\in\Z+\frac{1}{2}$, \ie,
$$
  \hur=\cL\ \cup\,
\left(\cL + \frac12(1+i+j+k)\right).
$$
Thus $\hur$ consists of $\Z^4$ together with the mid-points of the
standard 4-dimensional unit cubes in $\Z^4$ and is an integral domain
with division algorithm, \ie, if $\p,\q\in \hur$ with $\q\ne 0$, then
there exist $\s,\r\in\hur$ with $|\r|_2<|\q|_2$ and
\begin{equation*}
  \p=\s\q + \r,
\end{equation*}
(for example, see~\cite[Th.~373]{HW}).  As a result, up to a
multiplicative unit, any two Hurwitz integers have a unique greatest
right (respectively left) common divisor up to a left (resp. right)
unit, whence Hurwitz integers have essentially a unique
factorisation~\cite{ConwaySmith}.  Two Hurwitz integers $\q,\q'$ are
said to be right (or left) coprime when their right (or left) greatest
common divisor is a unit; we will write $(\q,\q')_r=1$. Two coprime
integers $\q,\q'$ generate $\hur$ in the sense that $\hur$ is a sum of
the principle ideals they generate, \ie, $\q\hur+\q'\hur=\hur$. A
{\em{prime}} quaternion $\xi$ is divisible only by a unit and an
associate of $\xi$, \ie, if in the factorisation $\xi = \q\,\q'$,
either $\q$ or $\q'$ is a unit.  Prime integer quaternions have a
neat characterisation (modulo units) in terms of rational primes:
an integer quaternion $\xi$ is prime if and only if $|\xi|_2^2$ is a
rational prime \cite[Th.~377]{HW}.

As a subgroup of $\R^4$, the Hurwitz integers $\hur$ are free abelian
with generators $\{ i, j, k,$ $\frac12(1+i+j+k)\}$ and form a scaled copy
of the lattice spanned by the root system of the simple Lie algebra
$\goth{f}_4$.  A fundamental region for $\hur$ is given by the
half-closed region in $\R^4$ with vertices $0,1,i,j$ and
$\frac{1}{2}(1+i+j+k)$.  It has 4-dimensional Lebesgue measure, or
4-volume, $|\Delta|= 1/2$.  For convenience we choose the simpler region
\begin{equation}
  \label{eq:Delta}
  \Delta=\{\xi\in \Qu\colon 0 \leqslant a,b,c<1, 0 \leqslant d<1/2\}
=[0,1)^3\times [0,1/2).  
\end{equation}

The Hurwitz rationals $\hurat$ are defined to be
\begin{equation*}
  \label{eq:Hrats}
  \hurat:=\{\pq\colon \p, \q\in \hur, \q\ne 0\}.
\end{equation*}
A quaternion is said to be {\em{irrational}} if at least one of its
(real) coordinates is irrational.  Approximating quaternions by
Hurwitz rationals $\pq\in\hurat$, where the Hurwitz integer $\q$ can
be regarded as a `denominator' of the Hurwitz rational $\pq$, is an
obvious analogue of approximating a real number by rationals
$p/q\in\Q$.  Distinct Hurwitz rationals enjoy essentially the same
`separation' property as distinct rationals.
\begin{lemma}
\label{lem:distdiff}
  If $\pq\ne \rs$, then
\begin{equation*}
  |\pq-\rs|_2\geqslant |\q|_2^{-1}|\sv|_2^{-1}. 
\end{equation*}
 \end{lemma}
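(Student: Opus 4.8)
The plan is to imitate the classical identity $|p/q-r/s|=|ps-qr|/|qs|\ge 1/|qs|$. The one place where non-commutativity interferes is the clearing of denominators: multiplying $\pq-\rs$ on the left by $\sv$ and on the right by $\q$ leaves the term $\sv\rb\sv^{-1}\q$, which need not be a Hurwitz integer, and forcing integrality by substituting $\sv^{-1}=\overline{\sv}\,|\sv|_2^{-2}$ introduces a spurious factor $|\sv|_2$ (likewise if one clears $\sv$ cleanly, one loses a factor $|\q|_2$). The remedy is to first replace $\q$ and $\sv$ by a single common right-denominator $\mathbf{m}$ whose norm is at most $|\q|_2|\sv|_2$, using the ideal theory of $\hur$, and only then clear.

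\emph{Step 1 (a common denominator of controlled size).} Since every point of $\R^4$ lies within Euclidean distance $<1$ of $\hur$ (the property highlighted in \S\ref{subsec:preliminaries}), $\hur$ has division algorithms on both sides, so every one-sided ideal of $\hur$ is principal (see also \cite{ConwaySmith}). The right ideal $I:=\q\hur\cap\sv\hur$ is non-zero, as it contains $|\sv|_2^{2}\,\q=\sv\overline{\sv}\,\q$, so $I=\mathbf{m}\hur$ for some non-zero $\mathbf{m}\in\hur$; and $|\mathbf{m}|_2\le|\q|_2|\sv|_2$ because for any non-zero $\mathbf{n}\in\hur$ the map $\mathbf{x}\mapsto\mathbf{n}\mathbf{x}$ is an $\R$-linear similarity of $\R^4$ of ratio $|\mathbf{n}|_2$, so $\mathbf{n}\hur$ is a full-rank sublattice of $\hur$ of index $|\mathbf{n}|_2^{4}$, whence $|\mathbf{m}|_2^{4}=[\hur:\mathbf{m}\hur]\le[\hur:\q\hur]\,[\hur:\sv\hur]=|\q|_2^{4}|\sv|_2^{4}$ by the elementary estimate $[\hur:L_1\cap L_2]\le[\hur:L_1]\,[\hur:L_2]$.

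\emph{Step 2 (rewrite and conclude).} Since $\mathbf{m}\in\q\hur$ and $\mathbf{m}\in\sv\hur$, write $\mathbf{m}=\q\mathbf{a}=\sv\mathbf{b}$ with $\mathbf{a},\mathbf{b}\in\hur$, so that $\q^{-1}=\mathbf{a}\mathbf{m}^{-1}$ and $\sv^{-1}=\mathbf{b}\mathbf{m}^{-1}$ and hence
\begin{equation*}
 \pq-\rs=(\pb\mathbf{a}-\rb\mathbf{b})\,\mathbf{m}^{-1},
\end{equation*}
where $\pb\mathbf{a}-\rb\mathbf{b}\in\hur$ is non-zero (else $\pq=\pb\mathbf{a}\mathbf{m}^{-1}=\rb\mathbf{b}\mathbf{m}^{-1}=\rs$). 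A non-zero Hurwitz integer $\mathbf{n}$ has $|\mathbf{n}|_2^{2}=\mathbf{n}\overline{\mathbf{n}}$ a positive rational integer, so $|\pb\mathbf{a}-\rb\mathbf{b}|_2\ge1$, and multiplicativity of $|\cdot|_2$ with Step 1 gives $|\pq-\rs|_2=|\pb\mathbf{a}-\rb\mathbf{b}|_2\,|\mathbf{m}|_2^{-1}\ge|\mathbf{m}|_2^{-1}\ge|\q|_2^{-1}|\sv|_2^{-1}$.

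The main obstacle is Step 1, and in particular the fact that one cannot avoid the principal-ideal property of $\hur$: the naive common multiples all fail — $\q\sv$ lies in $\q\hur$ but not in $\sv\hur$, while $|\sv|_2^{2}\q=\sv\overline{\sv}\,\q$ does lie in both but has norm $|\q|_2|\sv|_2^{2}$, losing the crucial square — so one must use that $\q\hur\cap\sv\hur$ is \emph{principal} together with the index identity $[\hur:\mathbf{n}\hur]=|\mathbf{n}|_2^{4}$. Equivalently, Step 1 could be phrased through a greatest common left divisor $\mathbf{d}$ of $\q$ and $\sv$ and the relation $|\mathbf{m}|_2^{2}\,|\mathbf{d}|_2^{2}=|\q|_2^{2}|\sv|_2^{2}$ between the norms of the least common left multiple and greatest common left divisor; everything else is routine.
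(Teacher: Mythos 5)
Your proof is correct, but it takes a genuinely different route from the paper. The paper disposes of the lemma by a one-line integrality computation: expanding $|\pq-\rs|_2^2\,\q\,\overline{\q}\,\s\,\overline{\s}$ gives
$|\pq-\rs|_2^2\,|\q|_2^2\,|\s|_2^2=|\p|_2^2\,|\s|_2^2+|\r|_2^2\,|\q|_2^2-2\Re(\p\,\overline{\q}\,\s\,\overline{\r})$,
and since squared norms of Hurwitz integers are rational integers while the real part of a Hurwitz integer lies in $\tfrac12\Z$, the right-hand side is a positive element of $\N$, hence at least $1$; dividing by $|\q|_2^2|\s|_2^2$ gives the claim. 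So the non-commutativity obstacle you flag at the outset (the stray $\sv\,\rb\,\sv^{-1}\q$ term) is neutralised in the paper not by constructing a good common denominator but by passing to the squared norm, where the two cross terms are conjugates of each other and collapse to $2\Re(\p\,\overline{\q}\,\s\,\overline{\r})\in\Z$. Your argument instead produces a common right denominator $\mathbf{m}$ with $\q\hur\cap\sv\hur=\mathbf{m}\hur$ and $|\mathbf{m}|_2\le|\q|_2|\sv|_2$, using the one-sided Euclidean/principal-ideal property of $\hur$ and the index identity $[\hur:\mathbf{n}\hur]=|\mathbf{n}|_2^4$; the steps all check out (the right ideal is non-zero since it contains $\sv\overline{\sv}\,\q=|\sv|_2^2\,\q$, the index bound follows from the injection of $\hur/(L_1\cap L_2)$ into $\hur/L_1\oplus\hur/L_2$, and $\q^{-1}=\mathbf{a}\mathbf{m}^{-1}$, $\sv^{-1}=\mathbf{b}\mathbf{m}^{-1}$ turn the difference into $(\p\mathbf{a}-\r\mathbf{b})\mathbf{m}^{-1}$ with non-zero integral numerator of norm at least $1$). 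What your route buys is structural insight: it exhibits the quantity $|\q|_2|\sv|_2$ as bounding the norm of a least common right multiple, and it would adapt to other orders with one-sided division; the cost is invoking principal-ideal machinery (and, implicitly, division with quotient on the correct side, which your covering-radius remark does justify). The paper's computation is shorter, needs nothing beyond multiplicativity of $|\cdot|_2$ and the integrality of norms and of $2\Re$ on $\hur$, and is the standard trick at this point.
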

\noindent 
On expanding $|\pq-\rs|_2^2\,
\q\,\overline{\q}\,\s\,\overline{\s}$ and multiplying out, one
gets
\begin{equation*}
0<  |\pq-\rs|_2^2\,|\q|_2^2\,|\s|_2^2 = |\p|_2^2\,|\s|_2^2 +
|\r|_2^2\,|\q|_2^2 - 2\Re (\p\,\overline{\q}\,\s\,\overline{\r})\in\N
\end{equation*}
and the lemma follows.

\paragraph{}
\emph{For the rest of this paper, 
$\p$ and $\q$ will denote Hurwitz  integers with
$\q\ne 0$ unless otherwise stated}.

\subsection{Dirichlet's theorem for quaternions} 
\label{subsec:QDT}

The quaternions $\Qu = \bigcup_{\q\in\hur} (\Delta + \q)$, the union
of translates of the fundamental region $\Delta$.  Hence for any
$\xi\in \Qu$ and any non-zero $\q\in\hur$, there exists a unique
$\p=\p(\xi,\q)\in \hur$ such that $\{\xi\}_\Delta$, the {\em{Hurwitz
    fractional part}} of $\xi$ (the analogue of the fractional part
$\{\alpha\}$ of a real number $\alpha$), satisfies
\begin{equation*}
    \{\xi\}_\Delta := \xi- \p\in \Delta, 
  \end{equation*} 
so that 
\begin{equation*}
|\{\xi\}_\Delta |=    |\xi\q-\p|_2 \leqslant \frac{\sqrt{13}}{4} < 1. 
\end{equation*}

This inequality can be strengthened by restricting the choice of $\q$
to give a quaternionic version of a uniform Dirichlet's theorem, where
the approximation is by Hurwitz rationals $\hurat$ with the Euclidean
norm. A short geometry of numbers proof is given; it will be used in
Lemma~\ref{lem:fQubi}.  The multiplicative constant $2$
in~\eqref{eq:QDT2} is chosen for convenience and could be replaced any
number greater than $4/\pi$ $2$ without affecting the results sought. 
Whether $4/\pi$ is best possible is an open question. 
\begin{theorem}
  \label{thm:QDT}
  Given any $\xi\in \mathbb{H}$ and any integer $N>1$,
  there exist $\p, \q\in \hur$ with $1\leqslant |\q|_2 \leqslant N$ 
  such that
  \begin{equation}
    \label{eq:QDT2}
    \left\vert \xi-\p\,\q^{-1}\right\vert_2 < \dfrac{2}{\vert \q\vert_2 N}.
  \end{equation}
  Moreover there are  infinitely many $\p, \q\in \hur $ 
such that
  \begin{equation}
    \label{eq:QDio2}
    \left\vert \xi- \p\,\q^{-1}\right\vert_2  
    <  \dfrac{2}{\vert \q\vert_2^2}. 
  \end{equation} 
\end{theorem}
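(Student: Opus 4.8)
The plan is to obtain the first inequality~\eqref{eq:QDT2} from Minkowski's convex body theorem applied to a lattice in $\R^8$, and then to deduce the `infinitely many' statement~\eqref{eq:QDio2} from~\eqref{eq:QDT2} by a standard minimality argument.

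\emph{Proof of~\eqref{eq:QDT2}.} Fix $\xi\in\Qu$ and the integer $N\ge 2$. Since $\R$ is central in $\Qu$, the map $\q\mapsto\xi\q$ is $\R$-linear on $\R^4=\Qu$, so the set $\Lambda:=\{(\p-\xi\q,\q)\colon \p,\q\in\hur\}\subset\R^4\times\R^4=\R^8$ is the image of the lattice $\hur\oplus\hur$ under the $\R$-linear automorphism $(\p,\q)\mapsto(\p-\xi\q,\q)$ of $\R^8$; this map is block upper triangular with identity diagonal blocks, hence of determinant $1$, so $\Lambda$ is a full lattice whose covolume equals that of $\hur\oplus\hur$, namely $|\Delta|^2=1/4$. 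Take the symmetric convex body $K=B(0,2/N)\times B(0,N)\subset\R^8$, a product of two $4$-balls; by~\eqref{eq:4ballmeasure} its $8$-dimensional volume is $\tfrac{\pi^2}{2}(2/N)^4\cdot\tfrac{\pi^2}{2}N^4=4\pi^4$, which exceeds $2^8\cdot\tfrac14=64$. Minkowski's theorem therefore gives a nonzero point $(\p-\xi\q,\q)\in\Lambda\cap K$ with $\p,\q\in\hur$. If $\q=0$ then $\p\ne 0$ and $|\p|_2<2/N\le 1$, impossible since a nonzero Hurwitz integer has norm at least $1$; hence $\q\ne 0$ and $1\le|\q|_2<N\le N$. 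Multiplicativity of the norm then gives $|\xi-\p\q^{-1}|_2=|\xi\q-\p|_2/|\q|_2<2/(N|\q|_2)$, which is~\eqref{eq:QDT2}.

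\emph{Proof of~\eqref{eq:QDio2}.} Every solution of~\eqref{eq:QDT2} also solves~\eqref{eq:QDio2}, since $|\q|_2<N$ forces $2/(N|\q|_2)<2/|\q|_2^2$. If $\xi=\p_0\q_0^{-1}\in\hurat$, the pairs $(n\p_0,n\q_0)$, $n\in\N$, are distinct and (again because $\R$ is central) all represent $\xi$, so each solves~\eqref{eq:QDio2} trivially and there are infinitely many. If $\xi$ is irrational, suppose only finitely many pairs $(\p,\q)\in\hur^2$ solve~\eqref{eq:QDio2}. For each such pair $|\xi\q-\p|_2=|\xi-\p\q^{-1}|_2\,|\q|_2>0$ (else $\xi=\p\q^{-1}\in\hurat$), so these finitely many positive numbers have a positive minimum $c$. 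Choosing an integer $N>\max\{2,2/c\}$ and applying~\eqref{eq:QDT2} yields a pair with $|\xi\q-\p|_2<2/N<c$ that nonetheless solves~\eqref{eq:QDio2}, a contradiction; hence~\eqref{eq:QDio2} has infinitely many solutions.

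\textbf{Main obstacle.} The argument is essentially routine; the points that need care are that $\q\mapsto\xi\q$ is $\R$-linear (so that $\Lambda$ really is a lattice and its covolume is independent of $\xi$), and the elementary lower bound $|\q|_2\ge 1$ for nonzero Hurwitz integers, which removes the degenerate case $\q=0$ and is where $N\ge 2$ enters; in the second part, $\xi\in\hurat$ must be handled separately since there~\eqref{eq:QDT2} may keep returning the same solution. One may instead run Dirichlet's pigeonhole directly---partition $\Delta$ into small cells and compare the number of Hurwitz integers in the ball $B(0,N/2)$, counted by volume against translates of $\Delta$, with the number of cells---but then the real obstacle is the bookkeeping of choosing cells of the right shape so the Hurwitz-integer count beats the cell count for every $N\ge 2$; the Minkowski route avoids this and makes the non-optimal constant $2$ transparently sufficient.
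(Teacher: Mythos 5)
Your proof is correct and follows essentially the same route as the paper: Minkowski's convex body theorem applied via a determinant-one shear (you shear the lattice $\hur\oplus\hur$, the paper shears the body $K$ — these are equivalent), followed by the standard minimality argument for infinitely many solutions, with $\xi\in\hurat$ treated separately exactly as the paper does (your pairs $(n\p_0,n\q_0)$ play the role of the paper's $\ab\p(\bb\p)^{-1}$). Your explicit treatment of the degenerate case $\q=0$ via $|\p|_2\ge 1$ is a small point the paper leaves implicit.
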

\begin{proof}  
We seek non-zero $\p,\q\in\hur$ as components for vectors in the set 
 \begin{equation*}
  K=\left\{
\left(\begin{array}{c}
  \xb \\ \yb
\end{array}
\right) 
\in\Qu^{\,2}\colon
 |\xi\yb-\xb|_2<\eps, |\yb|_2\leqslant N\right\} .
\end{equation*}
Now the set $K$ is convex and 
\begin{equation*}
   T(K)=  \left\{
\left(\begin{array}{c}
  \xb \\ \yb
\end{array}
\right) 
\in\Qu^2\colon
 |\xb|_2<\eps, |\yb|_2\leqslant N\right\} =B(0,\eps)\times B(0,N),
  \end{equation*}
where the matrix  $T=
\left(\begin{array}{cc}
  -1 & \xi \\ 0 & 1 
\end{array}\right)  $ has determinant $\det T=-1$ and $|T(K)|=|\det T|\, 
|K|=|K|$, 
the 8-volume of $K$.  Hence 
\begin{equation*}
  |K|=|B(0,\eps)|\times |B(0,N)| =
  \frac{\pi^2}{2}\eps^4\,\frac{\pi^2}{2}N^4 
= \frac{\pi^{4}}{4}\eps^4N^4.  
\end{equation*}

 The 4-volume of a fundamental region $\Delta$ of the Hurwitz
lattice is 1/2, so the 8-volume of $\Delta^2$ in $\Qu^2$ is 1/4. Hence
by Minkowski's theorem~\cite[Theorem 447]{HW}, if $|K| =
\pi^4\eps^4N^4/4 >2^8/4$, \ie, if $\eps>4/(\pi\,N)$, then $K$
contains a non-zero lattice point $(\p,\q)$ with $|\q|_2\leqslant N$ and
$|\xi\q-\p|_2<\eps$.  Choosing $\eps=2/N>4/(\pi\,N)$ gives
 \begin{equation*}
     \left\vert \xi-\p\,\q^{-1}\right\vert_2 < 
\frac{\eps}{\vert \q\vert_2} =\frac2{\vert \q\vert_2N}, 
   \end{equation*}
   where $|\q|_2\leqslant N$, which is~\eqref{eq:QDT2}.

   To show that there are infinitely many pairs $\p$, $\q$ in $\hur$
   satisfying~\eqref{eq:QDT2}, observe that the quaternionic rationals
   are not required to be in lowest terms, so that when $\xi=\ab
   \bb^{-1}$, $|\xi-\ab\bb^{-1}|_2= |\ab\bb^{-1}
   -\ab\,\p(\bb\,\p)^{-1}|_2=0$ for all non-zero $\p\in \hur$. Thus
   the inequality~\eqref{eq:QDio2} holds for infinitely many pairs
   $\ab\,\p$, $\bb\,\p$.  Note that if $\p,\q$ are coprime and $\xi =\ab
   \bb^{-1}\ne \pq$, where $\ab,\bb \in \hur$, then
   $|\pq-\ab\bb^{-1}|_2 \geqslant (|\q||\bb|)^{-1}$ by
   Lemma~\ref{lem:distdiff}, so that $|\bb|<|\q|$ and there are only
   finitely many solutions for~\eqref{eq:QDio2}.
 
The case when $\xi$ is not a Hurwitz rational remains, \ie, $\xi\ne
\ab \bb^{-1}$ for any $\ab,\bb\in\hur$, so that for all $\pq$, 
$|\xi-\p\,\q^{-1}|_2>0$.  
Suppose that the inequality~\eqref{eq:QDio2}
holds only for $\p\q^{-1}=\p^{(m)} (\q^{(m)})^{-1}$, where
$m=1,\dots,n$ and $|\q^{(m)}|_2\leqslant N$.  Then
\begin{equation*}
0<\min \left\{\frac{|\q^{(m)}|_2}{2} 
\left|\xi-\p^{(m)}(\q^{(m)})^{-1}\right|_2 \colon j=1,\dots,n\right\}=\eta 
\end{equation*}
for some $\eta>0$.  Let $N=[ 1/\eta ]+1>1/\eta$, where $[x]$ is the
integer part of the real number $x$.  Then by~\eqref{eq:QDT2}, there exist
$\p',\q'\in \hur$ with $|\q'|_2\leqslant N$ such that
  \begin{equation*}
    \left\vert \xi-\p'(\q'^{-1})\right\vert_2  < \dfrac{2}{\vert
    \q'\vert_2 \,N},
  \end{equation*}
whence
\begin{equation*}
   \frac{|\q'|_2}{2}\left\vert\xi-\p'(\q'^{-1})\right\vert_2
<\frac1{N} <\eta,
\end{equation*}
and $\p'(\q')^{-1}$ cannot be one of the
$\p^{(m)}(\q^{(m)})^{-1}$. This contradiction implies the
result.
\end{proof}

 The smaller constant
\begin{equation}
  \label{eq:appconst}
  c(\xi)=\liminf\{|\xi\q-\p|_2\,|\q|= 
|(\xi-\pq)\,\q^2|_2\,\colon \pq\in \hurat\}   \leqslant \sqrt{2/5}
\end{equation}
was established by Speiser~\cite{Speiser1932} for asymptotic
approximation, \ie, for all $\xi\in \quat$, there exist infinitely
many pairs $\p, \q\in \hur$ such that
\begin{equation}
    \label{eq:QDio}
    \left\vert \xi- \p\,\q^{-1}\right\vert_2  
    \leqslant \sqrt{\frac{2}{5}} \dfrac{1}{\vert \q\vert_2^2}
< \dfrac{1}{\vert \q\vert_2^2}. 
  \end{equation} 
  A.~L.~Schmidt~\cite{ASchmidt69} showed that $\sqrt{2/5}$ could not
  be reduced, so that it is analogous to Hurwitz's best possible
  rational approximation constant $1/\sqrt{5}$ for real
  numbers~\cite[\S11.8]{HW} and Ford's $1/\sqrt3$ for complex
  numbers~\cite{Ford1925} (see also~\cite{Vulakh97}).  Since the
  approximating rational quaternions are not required to be in their
  lowest terms here, the inequality~\eqref{eq:QDio} holds for all
  $\xi\in \quat$.

% [
% CHANGE TO?: Thus $\sqrt{2/5}$ is analogous to the `best possible'
% rational approximation constant $1/\sqrt5$ for the
% reals~\cite[\S11.8]{HW} and $1/\sqrt3$ for complex
% numbers~\cite{Ford1925}).]

\subsection{Badly approximable quaternions}
\label{subsec:QBA} 

In parallel with the classical case, Dirichlet's theorem for
quaternions is best possible in the sense that the exponent 2
in~\eqref{eq:QDio2} is best possible. 
Accordingly, a quaternion $\xi$ for which there exists a constant
$c>0$ such that
\begin{equation*}
  \label{eq:QBA}
|\xi-\pq|\geqslant \frac{c}{|\q|_2^2}
\end{equation*}
for all $\pq$ is called {\em{badly
    approximable}}. By~\eqref{eq:QDio}, $c \leqslant
\sqrt{2/5}$.  
%\cite{ASchmidt69,Speiser1932}. 
 The set of badly
approximable approximable quaternions will be denoted $\fBQ$.

\section{$\Psi$-approximable quaternions}
\label{sec:Psiapproxquats}

The inequality~\eqref{eq:QDio} establishes that
there are infinitely many $\pb$, $\q^{-1}$ in $\hur$ such that 
the approximants $\pq\in \hurat$ of Euclidean
distance are at most $|\q|_2^{-2}$ from the quaternion $\xi\in \Delta$.
As in the real case, it is natural to replace the error by a general
approximation function $\Psi$, \ie, a function $\Phi\colon
(0,\infty)\to (0,\infty)$ such that $\Psi(x)\to 0$ as $x\to\infty$. We
then consider the general inequality
\begin{equation}
  \label{eq:qda}
  |\xi-\pb\,\q^{-1}|_2<\Psi(|\q|_2)
\end{equation} 
for $\xi\in\quat$, or without loss of generality, for $\xi$ in the
compact set $\overline \Delta$ with $\Delta$ the $\cH$-fundamental
region from (\ref{eq:Delta}).
The Euclidean norm $|\q|_2$ chosen for quaternions and the argument
$|\q|_2$ of
the approximation function $\Psi$ being defined on 
$\sqrt{\N}=\{\sqrt{k}\colon k\in \N\}$. (This minor
complication would be avoided by working with the square of the norm
but then the analogy with $\R$ would not be so close.)  To make life
simpler and to make comparison with other types of \DA{} easier, we
will take $\Psi$ to be a step function satisfying
\begin{equation*}
  \label{eq:Psi[x]}
  \Psi(x)=\Psi([x]),
\end{equation*}
where $[x]$ is the integer part of $x$.
%{\color{red}so that $\Psi\colon \N\to (0,\infty)$ is well-defined. HMMM??}

The main objective of this paper is to determine the metrical
structure of the set
\begin{equation*}
  \label{eq:cW} 
\cW(\Psi)=
\left\{\xi\in\Qu\colon |\xi-\pb\,\q^{-1}|_2<\Psi(|\q|_2) \
{\text{ for infinitely many }} \pb,\q\in\hur\right\}
\end{equation*}
and some related sets. Choosing the approximation function $\Psi$ as
above is natural and fits in with a Duffin-Schaeffer
conjecture~\cite[pg.~17]{Sprindzuk} for quaternions that we will not
address here. Nevertheless, the conjecture is still problematic as a
more appropriate choice of argument for $\Psi$ would be the Hurwitz
integer $\q$ rather than an integer $k$ (see~\cite{HarmanMNT}).
 Restricting the approximating Hurwitz rationals $\pq$
    to those with $\p,\q$ coprime, \ie, to the subset
\begin{equation*}
  \label{eq:cW'}
  \cW{\,'}(\Psi)=
\left\{\xi\in\Qu\colon |\xi-\pb\,\q^{-1}|_2<\Psi(|\q|_2) \
{\text{ for infinitely many }} \pb,\q\in\hur, ( \pb,\q)_r=1\right\}
\end{equation*}
of $\cW(\Psi)$, raises some minor technicalities and will not be
considered.

\emph{Henceforth, unless otherwise stated, $\Psi\colon
\N\to (0,\infty)$ will be a (monotonic) decreasing approximation function}.

Theorem~\ref{thm:QDT} implies that if $\Psi(x)$ increases, then
$\cW(\Psi)=\Qu$.  Removing monotonicity altogether turns out to be a
difficult and subtle problem, associated with the Duffin-Schaeffer
conjecture.  Although we will be concerned mainly with monotonic
decreasing approximation functions, we could, without loss of
generality, take $\Psi$ to be simply monotonic in some general
statements.  The union of translates by Hurwitz integers of the
compact subset
\begin{equation*}
  \label{eq:V01}
\cV(\Psi):=\cW(\Psi)\cap \overline\Delta=
\left\{\xi\in\overline\Delta\colon |\xi-\pb\,\q^{-1}|_2<\Psi(|\q|_2) \
{\text{ for infinitely many }} \pb,\q\in\hur\right\},
  \end{equation*}
of $\cW(\Psi)$ yields 
\begin{equation}
  \label{eq:WUV}
  \cW(\Psi)=\bigcup_{\pb\in \hur}(\cV(\Psi) + \pb). 
\end{equation}
Thus the measure of $\cW(\Psi)$ can be obtained from that of
$\cV(\Psi)$. The same holds for the set $\cV'(\Psi):=\cW'(\Psi)\cap
\overline\Delta$. 

\subsection{Resonant points, resonant sets and near-resonant sets}
\label{subsec:resnearesets}

Diophantine equations and approximation can be associated with the
physical phenomenon of resonance and for this reason the rationals
$p/q$ are referred to as {\em{resonant}} points in $\R$ (the
terminology is drawn from mechanics, see for
example~\cite[\S18]{ArnoldGM}).  From this point of view, the Hurwitz
rationals $\pq\in \hurat$ are resonant points in $\quat$.  In view
of~\eqref{eq:WUV}, there is no loss of generality in considering
quaternions restricted to $\overline \Delta$.  For each non-zero
$\q\in\hur$, the lattice $\cR_\qb$ of Hurwitz rationals or resonant
points $\pq$ in $\overline\Delta$ given by
\begin{equation*}
  \label{eq:quatreset}
  \cR_\qb =\left\{\pb\qb^{-1}\colon \pb\in\hur\right\}\cap \overline\Delta 
\end{equation*}
is useful in calculations.  This resonant set is an analogue in $\Qu$
of the set of equally spaced points $\{p/q\colon 0\leqslant p\leqslant q\}$ in
$[0,1]$.

For each $\q$, the number $\card \cR_\q$ of Hurwitz rationals $\pq$ in
$\overline \Delta$ is the number of $\p$ in $\q\overline\Delta$, \ie,
\begin{equation}
\label{eq:cardRq} 
\card \cR_\q  =
\kern-4mm\sum_{\p\in\hur\colon \pq\in \overline\Delta} \kern-4mm1 = 
|\q|_2^4 + O(|\q|_2^3) \asymp |\q|_2^4.
\end{equation}
The set $\cR:=\{\cR_\q\colon \q\in\hur\setminus \{0\}\}=\hurat\cap
\overline \Delta $ consists of the Hurwitz rationals $\hurat$ in
$\overline\Delta$.

Let $B_0:=B(\xi_0;r)=\left\{\xi\in \Qu\colon \left\vert
    \xi-\xi_0\right\vert_2 < r\right\}$ be the quaternionic ball
centred at $\xi_0$ with radius~$r$ and 4-volume
$|B_0|=\pi^2 r^4/2 \asymp r^4$~\eqref{eq:4ballmeasure}.  
The number of Hurwitz integers $\p$ in $N\,\overline\Delta$ is $N^4
+ O(N^3)$.  Thus by volume considerations, the number of resonant
points $\pq $ with $|\p|_2< |\q|_2$ satisfies
\begin{equation*}
 \label{eq:cardpq}
 \sum_{\p\in\hur\colon |\p|_2<|\q|_2} \kern-4mm1 
 = 2\frac{\pi^2}{2}|\q|_2^4 + O(|\q|_2^3)  
= \pi^2 |\q|_2^4 + O(|\q|_2^3)  \asymp |\q|_2^4 .
\end{equation*}
The number of resonant points $\pq$ with $\p,\q$ coprime could be
considered by using the quaternionic analogue of Euler's $\phi$ function
but this raises some complicated technicalities and will not be pursued here.

For each non-zero $\q\in\hur$, let 
\begin{equation*}
  \label{eq:Bcd}
  \cB(\cR_{\q};\varepsilon)=\bigcup_{\pb\in\hur} B(\pq,\varepsilon)
  \cap \overline \Delta=\left\{\xi\in \overline\Delta \colon
    \left\vert \xi-\pb\q^{-1}\right\vert <\varepsilon \text{ for some } \pb\in
    \hur\right\}
\end{equation*}
be the set of balls $B(\pq,\varepsilon)$ in $\overline \Delta$.  The
points in $\cB(\cR_{\q},\varepsilon)$ are within $\eps$ of a resonant
point and so will be called {\em{near-resonant}} points. The centres
$\pq$ lie in $ \cR_q$ and the number of such balls is $\asymp
|\q|^4$. Clearly $\cB(\cR_\q,\eps)$ is a finite lattice or array of
quaternionic balls $B(\pb\q^{-1},\varepsilon)\cap \overline \Delta$.
By~\eqref{eq:4ballmeasure} and~\eqref{eq:cardRq}, we have $ |
B(\pq,\varepsilon)|\ll \varepsilon^4$ and, provided $\eps$ is small
enough, the near-resonant set $\cB(\cR_\q,\eps)$ has Lebesgue measure
\begin{equation}
  \label{eq:|BRq|}
  |\cB(\cR_\q,\eps)|\asymp |\q|_2^4\,\eps^4.
\end{equation}

\subsection{The structure of $\cV(\Psi)$} 
\label{sec:VPsi}
It is readily verified that the set $\cV(\Psi)\subset
\overline\Delta$ can be expressed in the form of a `limsup set'
involving unions of near-resonant sets as follows:
\begin{eqnarray}
  \label{eq:limsupV}
   \cV(\Psi)&=& \bigcap_{N=1}^\infty \kern1mm \bigcup_{n=N}^\infty 
 \kern1mm \bigcup_{[\vert \q\vert_2]=n}
  \cB(\cR_\q,\Psi(\vert \q\vert_2)) = 
\bigcap_{N=1}^\infty\bigcup_{\vert \q\vert_2\geqslant N} 
 \cB(\cR_\q,\Psi(\vert \q\vert_2)) \notag\\
&:=&  \limsup_{\vert \q\vert_2 \to\infty}
  \cB(\cR_\q,\Psi(\vert \q\vert_2)).  
\end{eqnarray}
Similarly 
\begin{equation}
  \label{eq:limsupW}
  \cW(\Psi) = 
\bigcap_{N=1}^\infty\kern1mm\bigcup_{\vert \q\vert_2\geqslant N}\kern1mm 
\bigcup_{\p\in\hur}B(\pq,\Psi(\vert \q\vert_2)) 
  =  \limsup_{\vert \q\vert_2 \to\infty} \kern1mm 
  \bigcup_{\p\in\hur}
 B(\pq,\Psi(\vert \q\vert_2)).  
\end{equation}
It follows that $\cV(\Psi)$ has a natural cover
\begin{equation}
  \label{eq:cover}
  \mathcal{C}_N(\cV(\Psi))=  
  \{ \cB(\cR_\q,\Psi(\vert \q\vert_2))\colon \vert \q\vert_2 \geqslant N\}
\end{equation}
for each $N=1,2,\dots$.  By~\eqref{eq:|BRq|}, the Lebesgue measure
of $\cB(\cR_\q,\Psi(\vert \q\vert_2))$ satisfies
\begin{equation*}
  \label{eq:|BRqPsi|}
  |\cB(\cR_\q,\Psi(\vert \q\vert_2))| \asymp|\q|_2^4 \Psi(\vert
  \q\vert_2)^4 .
\end{equation*}

\subsection{Approximation involving a power law}
\label{subsec:powerlaw}
In the special case that $\Psi(x):=x^{-v}$, $(v>0)$, we write
$\cV(\Psi):=\cV_v$ and $\cW(\Psi):=\cW_v$.  When $v=2$, it follows
from their definitions (\eqref{eq:limsupV},~\eqref{eq:limsupW})
and from~\eqref{eq:QDio} that 
\begin{equation}
\label{eq:V2W2}
\cV_2=\limsup_{|\q|_2\to\infty} \cB(\cR_\q,\vert
  \q\vert_2^{-2})=\overline\Delta \ \ {\text{and}} \ \
\cW_2 = \limsup_{|\q|_2\to\infty} 
 \bigcup_{\p\in\hur}  B(\pq,\vert \q\vert_2^{-2}) 
  = \Qu.   
\end{equation}

It is evident that for $v'\geqslant v$, $ \cW_{v'}\subseteq \cW_v$ and
$\cV_{v'}\subseteq \cV_v$.  For $v>2$, $\cW_v$ will be called the set
of {\em{very well approximable}} quaternions.  Analogous definitions
can be made for $\R^n$ and other spaces.

\section{Metrical \DA\ in $\H$: the quaternionic Four Peaks}
\label{sec:4peaksinQ}

In order to provide a convenient comparison with the real case, the
analogous results for quaternions are now set out in the same order as
in~\S\ref{subsection:classicalmDA}.

\paragraph{The First Peak: Khintchine's theorem for $\Qu$.}
As in the real case, the quaternionic Khintchine's theorem relates the
Lebesgue measure of the set $\cW(\Psi)$ of $\Psi$-approximable
quaternions to the convergence or divergence of a certain `volume' sum
while the analogue for \Ja's extension of \K's theorem does the same
for Hausdorff $f$-measure. The quaternionic version of \K's theorem is
now stated.

\begin{theorem} 
 \label{thm:qKT} 
 Let $\Psi\colon \N\to (0,\infty)$. Then the sets
  \begin{equation*}
    \cW(\Psi) \ and \ \cV(\Psi) \text{ are } 
\begin{cases}
      {\text null} & \text{ when } \sum_{m=1}^\infty \Psi(m)^4 \, m^7<\infty, \\
      {\text full}  & \text{ when $\Psi$ is decreasing and }  
      \sum_{m=1}^\infty\Psi(m)^4 \, m^7 =\infty. \\
    \end{cases}   
  \end{equation*}
  \end{theorem}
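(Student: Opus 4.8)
The plan is to treat the two cases separately, mirroring the classical strategy but adapting all volume estimates to dimension $4$. The convergence half is the routine direction: for each $m$ the near-resonant set at scale $\Psi(m)$ is the union over $\q$ with $[\vert\q\vert_2]=m$ of $\cB(\cR_\q,\Psi(m))$, and by~\eqref{eq:|BRq|} each such set has Lebesgue measure $\asymp \vert\q\vert_2^4\,\Psi(m)^4$. Since there are $\asymp m^3$ Hurwitz integers $\q$ with $[\vert\q\vert_2]=m$ (the number of lattice points in a spherical shell of radius $m$ in $\R^4$), the total measure of the $m$-th block of the cover~\eqref{eq:cover} is $\asymp m^3\cdot m^4\,\Psi(m)^4 = m^7\Psi(m)^4$. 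Hence $\sum_m m^7\Psi(m)^4 <\infty$ forces $\sum_{\vert\q\vert_2\ge N}\vert\cB(\cR_\q,\Psi(\vert\q\vert_2))\vert\to 0$, and the convergence Borel--Cantelli lemma applied to the limsup expression~\eqref{eq:limsupV} gives $\vert\cV(\Psi)\vert=0$; fullness/nullity then transfers to $\cW(\Psi)$ via~\eqref{eq:WUV}. One should note here that monotonicity of $\Psi$ is not needed for convergence, exactly as in Theorem~\ref{thm:KT}.

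For the divergence half, the natural route — and the one the introduction explicitly advertises — is to invoke the Beresnevich--Velani machinery (Theorem~\ref{thm:qbv} in the paper, their general limsup result from~\cite{BVParis2009a}), once we know the Hurwitz rationals $\hurat$ form a \emph{ubiquitous system} in $\overline\Delta$ with respect to the weight coming from $\vert\q\vert_2$. Establishing ubiquity is where the quaternionic Dirichlet theorem (Theorem~\ref{thm:QDT}) does the work: given any ball $B\subset\overline\Delta$ and any large $N$, the inequality~\eqref{eq:QDT2} guarantees that every $\xi\in B$ lies within $2/(\vert\q\vert_2 N)$ of some $\pq$ with $1\le\vert\q\vert_2\le N$, so the union of balls $B(\pq,2/(\vert\q\vert_2 N))$ over $\vert\q\vert_2\le N$ covers $\overline\Delta$ (this is already recorded as the statement that $\limsup\cB(\cR_\q,2\vert\q\vert_2^{-2})=\overline\Delta$). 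To turn this covering statement into the \emph{local ubiquity} hypothesis — that a definite proportion of any ball $B$ is covered by resonant neighbourhoods at scale a fixed multiple of $1/N$ — one argues by volume: the resonant points with $\vert\q\vert_2\le N$ that are relevant to $B$ number $\asymp N^8\cdot\vert B\vert$ up to lower order, so after discarding a negligible portion one gets the required mass lower bound. This is the "lengthy and delicate analysis" flagged in the introduction, and it is the main obstacle: one must carefully control the number and distribution of the $\pq$ hitting a given ball, handle the fact that $\Psi$ is only defined on $\sqrt\N$ rather than $\N$, and make sure the ubiquity function $\rho(N)\asymp N^{-2}$ is compatible with the weighting so that the critical sum in Theorem~\ref{thm:qbv} matches $\sum_m m^7\Psi(m)^4$.

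With ubiquity in hand, the divergence conclusion is essentially automatic: Theorem~\ref{thm:qbv} (the Lebesgue-measure specialisation, valid since~\cite{BDV06} shows the Beresnevich--Velani measure covers Lebesgue measure) says that if $\hurat$ is ubiquitous and $\sum_m m^7\Psi(m)^4=\infty$ with $\Psi$ monotonic, then $\cV(\Psi)$ is full in $\overline\Delta$. The monotonicity of $\Psi$ enters precisely here, to pass from the ubiquity estimate — which is stated with a single scale $\rho(N)$ — to the genuine limsup set with variable radii $\Psi(\vert\q\vert_2)$, via the standard trick of summing over dyadic blocks $2^{r}\le\vert\q\vert_2<2^{r+1}$ on which $\Psi$ is comparable to a constant. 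Finally Lemma~\ref{lem:|limsups|=} is invoked, as in~\eqref{eq:W2full}, wherever the approximation radius would exceed $1$ so that one must replace $\cB(\cR_\q,\Psi(\vert\q\vert_2))$ by a bounded-measure sub-collection without changing the limsup. Fullness of $\cV(\Psi)$ then yields fullness of $\cW(\Psi)$ by~\eqref{eq:WUV}, completing both cases.
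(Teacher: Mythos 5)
Your overall architecture is the paper's: the convergence half via the natural cover and a Borel--Cantelli estimate (your computation $m^3\cdot m^4\Psi(m)^4=m^7\Psi(m)^4$ per shell, transfer to $\cW(\Psi)$ by~\eqref{eq:WUV}) is correct and is exactly \S\ref{sec:Kconvergent}; and for divergence you correctly identify the route through ubiquity of $\hurat$, Dirichlet's theorem (Theorem~\ref{thm:QDT}) and the Beresnevich--Velani theorem (Theorem~\ref{thm:qbv}). The gap is at the one step you wave through: establishing ubiquity, and in particular your assertion that the ubiquity function can be taken $\rho(N)\asymp N^{-2}$ because ``the resonant points with $|\q|_2\le N$ relevant to $B$ number $\asymp N^8|B|$, so after discarding a negligible portion one gets the required mass lower bound''. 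A count of resonant points gives no lower bound on $\bigl|B_0\cap\bigcup_{|\q|_2\le N}\cB(\cR_\q,\rho(N))\bigr|$: the points could cluster, and, more to the point, Dirichlet only gives each $\xi$ an approximant within $2/(|\q|_2N)$, which is of order $N^{-2}$ only when $|\q|_2$ is comparable to $N$. What must be shown is that, inside an \emph{arbitrary} ball $B_0$, the set of points whose only Dirichlet approximants have small denominators occupies at most a fixed proportion of $B_0$; that is precisely the delicate part, and your sketch neither states nor proves it.

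The paper's mechanism for this is entirely absent from your proposal. It discards denominators $|\q|_2<\varpi(N)=\eta(N)^{1/4}N$, where $\eta$ is the slowly decaying function of Lemma~\ref{lem:eta} constructed \emph{from the divergent critical sum} $\sum_m m^7 f(\Psi(m))$; then the exceptional set $E(N)$ has measure $\ll(\varpi(N)/N)^4=\eta(N)\to0$, so eventually $|B_0\setminus E(N)|\ge\frac12|B_0|$ (Lemma~\ref{lem:fQubi}). The price is that ubiquity is obtained only at the larger scale $\rho(N)=2/(\eta(N)^{1/4}N^2)$ of~\eqref{eq:rH}, not at $N^{-2}$, and the whole point of the construction of $\eta$, together with Lemma~\ref{lem:rH} (dyadic decay of $\rho$) and Lemma~\ref{lem:comparison}, is to guarantee that the dyadic ubiquity sum~\eqref{eq:qubiksum} still diverges whenever~\eqref{eq:critsumLq} does, so that Theorem~\ref{thm:qbv} applies and (via~\eqref{eq:comparable}) gives $|\cV(\Psi)|=|\overline\Delta|$. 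Without this compensation device --- or without a genuinely local estimate of the small-denominator set relative to $|B_0|$, which you do not supply --- the divergence half is not proved. Two smaller corrections: monotonicity of $\Psi$ enters through the hypothesis of Theorem~\ref{thm:BV} and the sum comparison in Lemma~\ref{lem:comparison} (where $f\circ\Psi$ decreasing is used), not to pass from a single scale to variable radii; and Lemma~\ref{lem:|limsups|=} is used to absorb the constant $2$ from Dirichlet in the power-law application of \S\ref{subsec:powerlaw}, not to truncate radii exceeding $1$ in the Khintchine argument.
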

  Note that when $\cV(\Psi)$ has full Lebesgue measure,
  $|\cV(\Psi)|=|\Delta|=1/2$.  Again, it is evident that the value of
the critical `volume' or `measure' sum 
\begin{equation}
  \label{eq:critsumLq}
\sum_{m=1}^\infty \Psi(m)^4 m^7  
\end{equation}
determines the Lebesgue measure of $\cW(\Psi)$ and $\cV(\Psi)$.
Similar critical sums are associated with Hausdorff measures.

\paragraph{The Second Peak: \Ja's Hausdorff measure theorem for $\quat$.}

\begin{theorem}
  \label{thm:qJT} 
  Let $f$ be a dimension function with $f(x)/x^4$ decreasing and
  $f(x)/x^4\to\infty$ as $x\to 0$.  Then
  \begin{equation*}
    \sH^f(\cW(\Psi))=\sH^f(\cV(\Psi))=
    \begin{cases}
      0 & \text{ when }     \sum_{m=1}^\infty m^7 f(\Psi(m)) <\infty, \\
      \infty  &  \text{ when $\Psi$ is decreasing and }
\sum_{m=1}^\infty m^7 f(\Psi(m)) =\infty.
    \end{cases}
  \end{equation*} 
\end{theorem}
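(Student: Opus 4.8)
The plan is to follow the same two-case strategy that underlies the real Theorem~\ref{thm:JT}, transporting it through the $\limsup$ description of $\cV(\Psi)$ from~\eqref{eq:limsupV}. Throughout, by~\eqref{eq:WUV} it suffices to work in the compact set $\overline\Delta$, since $\sH^f$ is countably subadditive and translation-invariant, so $\sH^f(\cW(\Psi))$ and $\sH^f(\cV(\Psi))$ are simultaneously zero, and $\sH^f(\cW(\Psi))=\infty$ as soon as $\sH^f(\cV(\Psi))=\infty$. For the \textbf{convergence case}, I would use the natural cover $\mathcal{C}_N(\cV(\Psi))$ from~\eqref{eq:cover}. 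The near-resonant set $\cB(\cR_\q,\Psi(|\q|_2))$ is a union of $\asymp|\q|_2^4$ balls of radius $\Psi(|\q|_2)$, hence of diameter $\asymp\Psi(|\q|_2)$; since $f$ is increasing and continuous, covering $\cV(\Psi)$ by these individual balls gives
\begin{equation*}
  \sH^f(\cV(\Psi)) \ll \sum_{|\q|_2\ge N}|\q|_2^4\, f\!\left(\Psi(|\q|_2)\right)
  \asymp \sum_{m\ge N} m^7 f(\Psi(m)),
\end{equation*}
where the last step groups Hurwitz integers $\q$ by $[|\q|_2]=m$, of which there are $\asymp m^3$ (the surface-area count, from $\card\cR_\q\asymp|\q|_2^4$ differentiated, or directly from lattice-point counts in spherical shells). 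Letting $N\to\infty$ and using convergence of the tail forces $\sH^f(\cV(\Psi))=0$. One should check that the balls used in the cover have diameters tending to $0$, which holds because $\Psi(m)\to0$; if a scaling constant larger than $1$ intrudes (as in~\eqref{eq:QDio2}) Lemma~\ref{lem:|limsups|=} is not needed here since we are proving an upper bound.

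For the \textbf{divergence case}, the route indicated by the introduction is to invoke the quaternionic ubiquity of the Hurwitz rationals and then the Beresnevich–Velani machinery (Theorem~\ref{thm:qbv}, the adaptation of~\cite{BVParis2009a}), exactly as for the First Peak. Concretely, Theorem~\ref{thm:QDT} is used to show $\cR=\hurat\cap\overline\Delta$ is a ubiquitous system in $\overline\Delta$ with the correct ubiquity function (governed by the Dirichlet exponent, i.e. balls of radius $\asymp|\q|_2^{-2}$ around denominators of size up to $N$ cover $\overline\Delta$ up to a set of small measure); this is the lengthy analysis already carried out for Theorem~\ref{thm:qKT}. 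Granted that, Theorem~\ref{thm:qbv} yields $\sH^f(\cV(\Psi))=\sH^f(\overline\Delta)=\infty$ precisely when the $f$-volume sum $\sum_m m^7 f(\Psi(m))$ diverges, provided the hypotheses $f(x)/x^4$ decreasing and $f(x)/x^4\to\infty$ as $x\to0$ are met — the first is the monotonicity needed to compare the cover sum with the volume sum, the second is the growth condition excluding Lebesgue measure (it is the quaternionic analogue of $f(x)/x\to\infty$, since the ambient dimension is $4$ and the relevant Lebesgue comparison function is $f(x)=x^4$). Since the proof "follows similar lines and is sketched" (as promised in the introduction), I would state that the divergence case is obtained by feeding the ubiquity of $\hurat$ established in~\S\ref{sec:divergentsum} into Theorem~\ref{thm:qbv} with dimension function $f$, and only spell out the one new point: that the general-$f$ critical sum arising in Theorem~\ref{thm:qbv} reduces, after the shell-counting $\#\{\q:[|\q|_2]=m\}\asymp m^3$, to $\sum_m m^7 f(\Psi(m))$.

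The \textbf{main obstacle} is the divergence case, specifically verifying that $\hurat$ is a ubiquitous system with the sharp ubiquity function — this is the delicate part where non-commutativity of $\H$ and the slightly awkward fact that $|\q|_2$ ranges over $\sqrt{\N}$ rather than $\N$ enter, and where the counting estimates for Hurwitz integers in balls and shells (error terms $O(|\q|_2^3)$ in~\eqref{eq:cardRq}) must be controlled uniformly. However, since this ubiquity statement is exactly what is proved for Theorem~\ref{thm:qKT} in~\S\ref{sec:divergentsum}, the present proof can legitimately cite it; the only genuinely new content for Theorem~\ref{thm:qJT} is checking that the Beresnevich–Velani theorem applies with a general dimension function under the stated conditions on $f$, which~\cite{BDV06,BVParis2009a} guarantee, and reconciling the resulting critical sum with $\sum_m m^7 f(\Psi(m))$.
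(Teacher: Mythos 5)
Your proposal follows essentially the same route as the paper: the convergence case via the natural cover~\eqref{eq:cover} and shell counts of Hurwitz integers giving $\sum_{m\ge N} m^7 f(\Psi(m))$, and the divergence case by feeding the ubiquity of $\hurat$ (Lemma~\ref{lem:fQubi}, already established for Theorem~\ref{thm:qKT}) into Theorem~\ref{thm:qbv}, with the passage from the standard critical sum to the dyadic ubiquity sum handled as in Lemma~\ref{lem:comparison}. The one detail you gloss over is that the covering balls have diameter $2\Psi(|\q|_2)$, so the upper bound needs $f(2\Psi(m))\ll f(\Psi(m))$, which does not follow from $f$ increasing alone but is exactly what the hypothesis that $f(x)/x^4$ decreases provides (and is how the paper argues).
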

The sum 
\begin{equation}
  \label{eq:Hcritsum}
  \sum_{m=1}^\infty m^7 f(\Psi(m))
\end{equation}
is the critical sum for Hausdorff $f$-measure. This $f$-measure version of
Theorem~\ref{thm:qKT} does not hold for Lebesgue measure but the two
theorems can be combined into a single quaternionic `\K-\Ja' result
(see~\cite[\S2.3]{BVParis2009a}).
\begin{theorem}
  \label{thm:qKJT} 
Let $f$ be a dimension function with $f(x)/x^4$ decreasing. Then 
  \begin{equation*}
    \sH^f(\cV(\Psi))=
    \begin{cases}
      0 & \text{ when }     \sum_{m=1}^\infty m^7 f(\Psi(m)) <\infty, \\
      \sH^f(\overline\Delta) &  \text{ when  $\Psi$ is decreasing and }
\sum_{m=1}^\infty m^7 f(\Psi(m)) =\infty.
    \end{cases}
  \end{equation*} 
\end{theorem}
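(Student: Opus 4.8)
The plan is to deduce Theorem~\ref{thm:qKJT} by gluing together the Lebesgue result (Theorem~\ref{thm:qKT}) and the Hausdorff result (Theorem~\ref{thm:qJT}) via a dichotomy on the behaviour of $\theta(x):=f(x)/x^4$ as $x\to0^+$. Since $\theta$ is decreasing by hypothesis, it has a limit $L:=\lim_{x\to0^+}\theta(x)\in(0,\infty]$, and the same monotonicity gives the doubling bound $f(2x)=16\,\theta(2x)\,x^4\le 16\,\theta(x)\,x^4=16f(x)$, which I would use throughout to pass between radii and diameters of balls without further comment. This mirrors the unified `Khintchine--Jarn\'ik' statement of Beresnevich and Velani in~\cite[\S2.3]{BVParis2009a}, into which the quaternionic Theorems~\ref{thm:qKT} and~\ref{thm:qJT} are tailored to feed.

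\textbf{Convergence half.} Suppose $\sum_{m\ge1}m^7 f(\Psi(m))<\infty$ (no monotonicity of $\Psi$ needed here). I would use the natural cover $\mathcal{C}_N(\cV(\Psi))$ from~\eqref{eq:cover}: each near-resonant set $\cB(\cR_\q,\Psi(|\q|_2))$ is a union of $\asymp|\q|_2^4$ balls of radius $\Psi(|\q|_2)$ by~\eqref{eq:cardRq}, and the number of $\q\in\hur$ with $[|\q|_2]=m$ is $\asymp m^3$, so the $f$-sum of this cover satisfies
\[
\sum_{C\in\mathcal{C}_N(\cV(\Psi))}f(\di C)\ \ll\ \sum_{|\q|_2\ge N}|\q|_2^4\,f\bigl(2\Psi(|\q|_2)\bigr)\ \ll\ \sum_{m\ge N}m^7 f(\Psi(m))\ \longrightarrow\ 0
\]
as $N\to\infty$; since $\Psi(|\q|_2)\to0$ this is a genuine $\eps$-cover for $N$ large, so $\sH^f(\cV(\Psi))=0$.

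\textbf{Divergence half.} Assume $\Psi$ decreasing and $\sum_{m\ge1}m^7 f(\Psi(m))=\infty$, and split on $L$. If $L<\infty$, then $\theta$ is bounded; since $\Psi(m)\to0$ and $\overline\Delta$ is bounded, only the values of $f$ near $0$ are relevant and there $f(x)\asymp x^4$, so on subsets of $\overline\Delta$ the measure $\sH^f$ is comparable to $4$-dimensional Lebesgue measure and, in particular, $\sH^f$-null $\Leftrightarrow$ Lebesgue-null. The divergence hypothesis now reads $\sum_m m^7\Psi(m)^4=\infty$, exactly the critical sum of Theorem~\ref{thm:qKT}, which gives $|\overline\Delta\setminus\cV(\Psi)|=0$, hence $\sH^f(\overline\Delta\setminus\cV(\Psi))=0$; countable additivity of the Borel measure $\sH^f$ then yields $\sH^f(\cV(\Psi))=\sH^f(\overline\Delta)$. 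If instead $L=\infty$, the hypotheses of Theorem~\ref{thm:qJT} hold, giving $\sH^f(\cV(\Psi))=\infty$, and I would then check $\sH^f(\overline\Delta)=\infty$ as well: for any $M$ there is $\delta>0$ with $f(x)\ge Mx^4$ on $(0,\delta)$, so $\sH^f(\overline\Delta)\ge M\,\sH^{t^4}(\overline\Delta)\asymp M|\overline\Delta|>0$ by~\eqref{eq:comparable}, and $M$ is arbitrary.

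\textbf{Main obstacle.} The appeals to Theorems~\ref{thm:qKT} and~\ref{thm:qJT} carry essentially all the weight, so the only points requiring care are bookkeeping ones: obtaining the exponent $7=4+3$ in the convergence estimate (the $\asymp|\q|_2^4$ balls per resonant set against the $\asymp m^3$ integers $\q$ on each sphere, with the radius-to-diameter passage controlled by $\theta$ decreasing), and, in the $L<\infty$ sub-case, upgrading `full Lebesgue measure' to the exact identity $\sH^f(\cV(\Psi))=\sH^f(\overline\Delta)$ — which rests on the coincidence of $\sH^f$- and Lebesgue-nullity for $f\asymp t^4$ together with countable additivity of $\sH^f$. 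I expect no serious difficulty beyond these.
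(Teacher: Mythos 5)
Your proposal is correct, but the gluing of the two halves is done differently from the paper. The convergence half is the same natural-cover computation the paper itself uses in the convergent case of Theorem~\ref{thm:qJT} (the hypothesis that $f(x)/x^4$ decreases is used exactly as you use it, to absorb the factor $f(2\Psi(m))$), so there the two arguments coincide. For divergence, however, the paper does not split on $L=\lim_{x\to0^+}f(x)/x^4$: it goes through the ubiquity machinery -- Lemma~\ref{lem:fQubi} together with the Beresnevich--Velani Theorem~\ref{thm:qbv}, which gives $\sH^f(\cV(\Psi))=\sH^f(\overline\Delta)$ directly in the regimes $f(x)=x^4$ and $f(x)/x^4\to\infty$ -- and remarks that, alternatively, the Mass Transference Principle applied to Theorem~\ref{thm:qKT} (with the balls inflated to radius $f(\Psi(\vert\q\vert_2))^{1/4}$, the critical sum for the inflated system being precisely $\sum m^7f(\Psi(m))$) yields the same conclusion. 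Your gluing step replaces both of these by the elementary dichotomy: if $L<\infty$ then $\theta(x_1)x^4\le f(x)\le Lx^4$ near $0$, so $\sH^f$ is comparable to $\sH^4$ and hence has the same null sets as Lebesgue measure on $\overline\Delta$, and Theorem~\ref{thm:qKT} plus additivity of the Borel measure $\sH^f$ gives the exact identity; if $L=\infty$ you invoke Theorem~\ref{thm:qJT}, and then $\sH^f(\overline\Delta)=\infty$ already follows from monotonicity since $\cV(\Psi)\subseteq\overline\Delta$, so your separate lower bound for $\sH^f(\overline\Delta)$ is not strictly needed. What your route buys is an explicit and cheap treatment of the intermediate case $1<L<\infty$, which Theorem~\ref{thm:qbv} as stated does not address; what the paper's route buys is independence from Theorem~\ref{thm:qJT} (the Mass Transference Principle upgrades Theorem~\ref{thm:qKT} alone) and conformity with the general ubiquity framework in which all four peaks are proved.
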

The Mass Transference Principle (see~\S\ref{subsec:mtp2} below) can
also be used to deduce this theorem from Theorem~\ref{thm:qKT}.
Specialising Theorem~\ref{thm:qJT} to Hausdorff $s$-measure, where
$f(x)=x^s$, gives
\begin{theorem}
  \label{thm:qJsm}
  Suppose $0\leqslant s <4$.  Then
  \begin{equation*}
    \sH^s(\cW(\Psi))=\sH^s(\cV(\Psi))=
    \begin{cases}
    0, & \text{ when }  \sum_{m=1}^\infty m^7\Psi(m)^s <\infty, \\
     \infty,
&  \text{ when $\Psi$ is decreasing and }
\sum_{m=1}^\infty m^7\Psi(m)^s  =\infty.
    \end{cases}
  \end{equation*} 
\end{theorem}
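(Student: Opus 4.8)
The plan is to deduce Theorem~\ref{thm:qJsm} directly from Theorem~\ref{thm:qJT} by specialising the dimension function to $f(x)=x^{s}$. First I would verify that this $f$ is admissible in Theorem~\ref{thm:qJT}: one has $f(x)/x^{4}=x^{s-4}$, and for $0<s<4$ the exponent $s-4$ is strictly negative, so $x\mapsto x^{s-4}$ is decreasing on $(0,\infty)$ with $x^{s-4}\to\infty$ as $x\to 0^{+}$, exactly as required. Since $f(\Psi(m))=\Psi(m)^{s}$, the critical sum $\sum_{m}m^{7}f(\Psi(m))$ of Theorem~\ref{thm:qJT} is then precisely $\sum_{m}m^{7}\Psi(m)^{s}$, and feeding this in gives $\sH^{s}(\cV(\Psi))=0$ when that sum converges and $\sH^{s}(\cV(\Psi))=\infty$ when $\Psi$ is decreasing and it diverges.

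It remains to pass from $\cV(\Psi)$ to $\cW(\Psi)$, which is routine. By~\eqref{eq:WUV}, $\cW(\Psi)=\bigcup_{\pb\in\hur}(\cV(\Psi)+\pb)$; in the convergence case countable subadditivity and translation invariance of $\sH^{s}$ give $\sH^{s}(\cW(\Psi))\le\sum_{\pb\in\hur}\sH^{s}(\cV(\Psi)+\pb)=0$, while in the divergence case $\cV(\Psi)\subseteq\cW(\Psi)$ already forces $\sH^{s}(\cW(\Psi))=\infty$. The degenerate value $s=0$ (where $\sH^{0}$ is counting measure) lies outside the dimension-function framework and is read off separately: there $\sum_{m}m^{7}\Psi(m)^{0}=\sum_{m}m^{7}=\infty$ and $\cW(\Psi)$ is an infinite set, so $\sH^{0}(\cW(\Psi))=\infty$ consistently; thus nothing is lost in assuming $0<s<4$ in the argument above.

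The point to stress is that no new obstacle arises here: everything of substance is already contained in Theorem~\ref{thm:qJT}. For an independent check, the convergence half is immediate from the natural cover~\eqref{eq:cover}, since $\cB(\cR_{\q},\Psi(|\q|_2))$ is a union of $\card\cR_{\q}\asymp|\q|_2^{4}$ balls of diameter $2\Psi(|\q|_2)\le 2\Psi(N)$, whence $\sum_{|\q|_2\ge N}\card\cR_{\q}\,(2\Psi(|\q|_2))^{s}\asymp\sum_{|\q|_2\ge N}|\q|_2^{4}\Psi(|\q|_2)^{s}\asymp\sum_{m\ge N}m^{7}\Psi(m)^{s}$ on grouping the $\asymp m^{3}$ Hurwitz integers with $[|\q|_2]=m$; this is a tail of the critical sum, so letting $N\to\infty$ yields $\sH^{s}(\cV(\Psi))=0$. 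The divergence half is the genuinely deep direction, and it is precisely what the ubiquity argument together with the mass transference principle underlying Theorems~\ref{thm:qKT} and~\ref{thm:qJT} supplies.
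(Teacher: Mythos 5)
Your proposal is correct and follows essentially the same route as the paper, which obtains Theorem~\ref{thm:qJsm} simply by putting $f(x)=x^{s}$ in Theorem~\ref{thm:qJT}. Your additional checks (admissibility of $f(x)=x^{s}$ for $0<s<4$, the passage from $\cV(\Psi)$ to $\cW(\Psi)$ via~\eqref{eq:WUV}, and the degenerate case $s=0$) are routine and sound, so there is nothing further to add.
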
 
\noindent   Specialising further  to the Hausdorff $s$-measure for a power law
approximation function, \ie, to $\Psi(m)=m^{-v}$, where $v>0$, gives
\begin{theorem}
  \label{thm:qsm} 
Suppose $v>2$. Then
    \begin{equation*}
    \sH^s(\cW_v)= \sH^s(\cV_v)=
    \begin{cases}
      0 & \text{ when }   s > 8/v, \\
      \infty  & \text{ when } s \leqslant 8/v.
    \end{cases}
  \end{equation*} 
\end{theorem}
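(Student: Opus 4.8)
The plan is to derive Theorem~\ref{thm:qsm} as a direct specialisation of Theorem~\ref{thm:qJsm}, exactly as Theorem~\ref{thm:JBT} followed from Theorem~\ref{thm:JT} in the real case. First I would substitute the power-law approximation function $\Psi(m)=m^{-v}$ into the critical sum $\sum_{m=1}^\infty m^7\Psi(m)^s$ appearing in Theorem~\ref{thm:qJsm}, obtaining the series $\sum_{m=1}^\infty m^{7-vs}$. This is a $p$-series, convergent precisely when $7-vs<-1$, i.e. when $vs>8$, equivalently $s>8/v$; and divergent when $s\le 8/v$. Since $\Psi(m)=m^{-v}$ with $v>2$ is patently a decreasing approximation function, the divergence clause of Theorem~\ref{thm:qJsm} applies with no extra hypotheses to check.

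Next I would confirm that the admissibility constraint $0\le s<4$ in Theorem~\ref{thm:qJsm} is compatible with the relevant range of $s$. The interesting threshold value is $s=8/v$, and the hypothesis $v>2$ gives $8/v<4$, so the critical exponent lies strictly inside the permitted interval $[0,4)$. For $s$ slightly above $8/v$ we still have $s<4$ provided we restrict attention to such $s$ (and in any case $\sH^s$ for $s\ge 4$ of any subset of $\R^4$ is controlled trivially, being zero once $s>4$, so the convergence conclusion $\sH^s=0$ for $s>8/v$ is unaffected); for $s\le 8/v<4$ the divergence conclusion $\sH^s=\infty$ is exactly what Theorem~\ref{thm:qJsm} delivers. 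Thus both cases of Theorem~\ref{thm:qsm} fall out, and the stated dichotomy for $\sH^s(\cW_v)=\sH^s(\cV_v)$ at the breakpoint $s=8/v$ is established.

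Finally, I would remark — as is done after Theorem~\ref{thm:JBT} — that this immediately yields the Hausdorff dimension statement $\hdim \cW_v=\hdim\cV_v=8/v$ for $v>2$ (the quaternionic Jarn\'ik--Besicovitch theorem, Theorem~\ref{thm:qJBT}), since $8/v$ is precisely the value of $s$ at which $\sH^s(\cW_v)$ drops from $\infty$ to $0$. I do not anticipate any genuine obstacle here: the entire argument is the routine evaluation of a $p$-series together with a check that the critical exponent $8/v$ respects the constraint $s<4$ coming from $v>2$. The only mild subtlety worth a sentence is the boundary behaviour at $s=8/v$ itself, where the series $\sum m^{7-vs}=\sum m^{-1}$ diverges, so this value is correctly placed in the divergence (hence $\sH^s=\infty$) case, consistent with the inequality $s\le 8/v$ as written in the statement.
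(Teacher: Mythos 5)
Your proposal is correct and is essentially the paper's own derivation: Theorem~\ref{thm:qsm} is obtained there by specialising Theorem~\ref{thm:qJsm} to $\Psi(m)=m^{-v}$, the critical sum becoming the $p$-series $\sum_m m^{7-vs}$, which converges precisely for $s>8/v$, and the hypothesis $v>2$ placing the threshold $8/v$ inside the admissible range $[0,4)$ exactly as you check. The paper additionally offers a more self-contained proof of the same dichotomy in \S\ref{subsec:appnpowerlaw} via the Mass Transference Principle together with Theorem~\ref{thm:QDT} and a direct covering estimate, but your route through Theorem~\ref{thm:qJsm} coincides with the derivation the paper gives alongside the statement.
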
 

\paragraph{The Third Peak: the \JB{} theorem for $\quat$.} The \HD{}
of $\cW_v$ is the point of discontiunuity of the Hausdorff measure
$\sH^s(\cW_v)$ and so the quaternionic version of the \JB{} theorem
follows by definition from the above result.
 \begin{theorem}
\label{thm:qJBT}
 Let $v\geqslant 0$. Then the \HD{} of $\cW_v$ is given by
 \begin{equation*}
    \hdim \cW_v=\hdim \cV_v=
    \begin{cases}
      4  & \text{ when } v\leqslant 2, \\
      \dfrac{8}{v} & \text{ when } v > 2.
    \end{cases}
  \end{equation*}
\end{theorem}
Note that $\sH^s(\cW_v)=\infty$ when $s=\hdim \cW_v=8/v$.  A 
proof of this result will also be given in~\S\ref{subsec:appnpowerlaw}
below, using the Mass Transference Principle (see~\S\ref{subsec:mtp2}
below) and the quaternionic Dirichlet theorem
(Theorem~\ref{thm:QDT}).

\paragraph{The Fourth Peak: \Ja's theorem for ${\mathfrak B}_{\mathbb{H}}$.}  The
definition of $\QBA$, the set of badly approximable quaternions, is
given in~\S\ref{subsec:QBA} above.

  \begin{theorem} 
\label{thm:QJTBA}
The set $\QBA$ is null with full \HD, \ie,    $|\QBA|=0 $ and 
 \begin{equation*}
       \hdim ({\mathfrak B}_{\mathbb{H}}) = 4.
    \end{equation*}
  \end{theorem}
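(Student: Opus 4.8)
The statement splits into two independent assertions: (1) $|\QBA|=0$, and (2) $\hdim(\QBA)=4$. For (1), the plan is to observe that $\QBA$ is contained in the complement of $\cV_2$, or more precisely that any badly approximable $\xi$ with constant $c$ fails infinitely many of the inequalities $|\xi-\pq|_2<\Psi(|\q|_2)$ for any $\Psi$ with $\Psi(m)\,m^2\to 0$; choosing such a $\Psi$ with divergent critical sum $\sum m^7\Psi(m)^4=\infty$ (e.g.\ $\Psi(m)=m^{-2}(\log m)^{-1/4}$ for $m$ large), Theorem~\ref{thm:qKT} gives $|\cW(\Psi)|$ full, hence $|\overline\Delta\setminus\cW(\Psi)|=0$. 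Since $\QBA\cap\overline\Delta$ is covered by a countable union over $c\in\{1/n\}$ of sets each contained in $\overline\Delta\setminus\cW(\Psi)$ (one needs $c|\q|_2^{-2}\ge\Psi(|\q|_2)$ eventually, which holds), each such set is null, so $\QBA\cap\overline\Delta$ is null; translating by Hurwitz integers as in~\eqref{eq:WUV} gives $|\QBA|=0$.

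For (2), since $\hdim$ is monotone and $\QBA\subseteq\quat$ identified with $\R^4$, the upper bound $\hdim(\QBA)\le 4$ is automatic. The substance is the lower bound $\hdim(\QBA)\ge 4$. Here the plan is to invoke the abstract framework of Kristensen--Thorn--Velani~\cite{ktv06}, as already signalled in~\S\ref{subsec:generalisations}: their main theorem states that in a metric space equipped with a suitable measure, the set of points badly approximable with respect to a family of resonant sets has full Hausdorff dimension provided a short list of geometric axioms holds. The task is therefore to verify these axioms for $(\overline\Delta,|\cdot|_2)$ with the resonant sets $\cR_\q$ of Hurwitz rationals and weights $|\q|_2$: namely that $\overline\Delta$ supports an Ahlfors-regular measure of exponent $4$ (Lebesgue measure, by~\eqref{eq:4ballmeasure}), a local covering/packing condition on the resonant points at each scale (supplied by the counting estimate~\eqref{eq:cardRq}, $\card\cR_\q\asymp|\q|_2^4$, together with the spacing Lemma~\ref{lem:distdiff}), and a ``positivity of the local intersection'' or global measure condition ensuring the resonant sets are genuinely ubiquitous — which follows from the quaternionic Dirichlet Theorem~\ref{thm:QDT}. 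Once these are checked, their theorem yields $\hdim(\QBA\cap\overline\Delta)=4$, and hence $\hdim(\QBA)=4$ by~\eqref{eq:WUV} and countable stability of dimension.

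The main obstacle will be the verification of the KTV axioms — in particular the precise spacing/counting hypothesis: one must show that for each $\q$ the points of $\cR_\q$ are $\asymp|\q|_2^{-2}$-separated (this is exactly Lemma~\ref{lem:distdiff} with $|\sv|_2\le|\q|_2$, combined with the fact that distinct Hurwitz rationals of denominator-norm $\le|\q|_2$ differ by at least $|\q|_2^{-2}$) and number $\asymp|\q|_2^4$ in $\overline\Delta$, and simultaneously that at scale $R$ the union of $\cR_\q$ over $|\q|_2\le R$ is a $c R^{-2}$-net of $\overline\Delta$ for a suitable constant, which is precisely the content of~\eqref{eq:QDT2}. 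Threading these two estimates so that the exponents match the ``$\delta=1$'' and ``Ahlfors exponent $=4$'' normalisations of~\cite{ktv06} is where care is needed; the non-commutativity of $\H$ does not enter, since all the relevant quantities are Euclidean norms and $|\cdot|_2$ is multiplicative. I would then remark that, just as in~\cite{dk04a} for the complex case, the stronger Schmidt-game (winning) statement also holds but is not pursued here.
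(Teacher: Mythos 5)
Your proposal is correct and takes essentially the paper's route: nullity comes from the divergence case of Theorem~\ref{thm:qKT} with an approximation function of order $m^{-2}$ (your version, fixing the constant $c$ and comparing with a decreasing $\Psi$ satisfying $\Psi(m)m^{2}\to 0$ and $\sum m^{7}\Psi(m)^{4}=\infty$, is the standard containment argument and is sound), and full dimension comes from specialising the Kristensen--Thorn--Velani theorem \cite{ktv06} (Theorem~\ref{thm:gBA} in the paper) to $\Omega=\overline\Delta$, $\delta=4$, $\Psi(|\q|_2)=|\q|_2^{-2}$, with the separation Lemma~\ref{lem:distdiff} and a volume/packing count doing the real work. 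The one adjustment to make when writing this up is that the KTV criterion contains no Dirichlet-type ubiquity or net axiom, so Theorem~\ref{thm:QDT} is not needed in this half: its hypotheses are the Ahlfors-type measure condition, the two ball-counting bounds~\eqref{eq:lbballs} and~\eqref{eq:ubballs} with $K_1>K_2$ (verified in the paper by packing hypercubes of side $\asymp\kappa^{-2(n+2)}$ into $\theta B^{(n)}$, and by using Lemma~\ref{lem:distdiff} to show each $\theta B^{(n)}$ contains at most one Hurwitz rational with $\kappa^{n}\le|\q|_2<\kappa^{n+1}$), and $\hdim(\bigcup_j R_j)<\delta$ --- all of which your spacing, counting and measure estimates supply, exactly as in the paper.
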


\section{Ubiquitous systems }
\label{sec:ubiquity}

As has been pointed out in~\S\ref{sec:QDA}, the metrical structure of
lim sup sets which arise in number theory and elsewhere can be
analysed very effectively using ubiquity.  A ubiquitous system (or
more simply ubiquity) is a more quantitative form of density
underlying the classical Lebesgue and the more delicate Hausdorff
measure results.  Originally introduced to investigate lower bounds
for Hausdorff dimension~\cite{DRV90a}, ubiquitous systems have been
extended considerably and now provide a way of determining the
Lebesgue and Hausdorff measures of a very general class of `limsup'
sets~\cite[Theorems 1 \& 2]{BDV06}.  Indeed using the Mass
Transference Principle, these two measures have been shown to be
equivalent for this class of limsup sets, rather than Hausdorff
measure being a refinement of Lebesgue~\cite{BVParis2009a}.

\subsection{A metric space setting}
\label{subsec:ms}

The definition of ubiquity given in~\cite{BDV06} applies to a compact
metric space $(\Omega,d)$ with a non-atomic finite measure $\mu$ (which
includes $n$-dimensional Lebesgue measure). The resonant sets play the
role of the approximants, which in the real line consist of the
rationals. We will give a simplified version appropriate for \DA{} in
$\quat$. The deep arguments in~\cite{BDV06,ktv06} are based on dyadic
dissection suited to the Cantor-type constructions used in the
proof. Thus the important ubiquity sum~\eqref{eq:qubiksum} is
2-adic, unlike the critical sum~\eqref{eq:critsumLq} which emerges
from simpler standard estimates. 

We start with a family $\cR$ of {\it{resonant sets}}  $R_j$ in $\Omega$,
where $j$ lies in a countable discrete index set $J$ with
each $j\in J$ having a weight $\lfloor j\rfloor$.  The number of $j$
satisfying $\lfloor j\rfloor \leqslant N$ is assumed to be finite for each
$N\in\N$. In $\quat$ we take 
$j=\q$ , the index set $J=\{\q\in \hur \colon \q\ne 0\}$,
and the weight $\lfloor j\rfloor=\lfloor\q\rfloor := |\q|_2$. 
The resonant set $R_j=\cRq$ corresponds to the lattice $\cRq$ of
resonant points $\pq\in\hur$ or in $\hur\cap \overline \Delta$.
 In the general formulation, the resonant sets $R_j$
 can be lines, planes etc.

Let $B_0:=B(\xi_0,r)=\{\xi\in \Omega\colon d(\xi,\xi_0)<r\}$, for $r>0$,
be any fixed ball in $\Omega$ and let $\cR$ be the family $\{R_j\colon
j\in J\}$ of resonant points in $\Omega$.  Further, let $\Psi$ be an
approximation function, \ie, $\Psi\colon (0,\infty)\to (0,\infty)$  
converges to 0 at $\infty$.  Let $\rho\colon \N\to
(0,\infty)$ be a function with $\rho(m)=o(1)$. 
If for a given $B_0$,
\begin{equation}
  \label{eq:ubidef}
  \mu(B_0\cap\kern-2mm\bigcup_{1 \leqslant \lfloor j\rfloor \leqslant N} 
\kern-2mmB(R_j,\rho(N)))\gg \mu(B_0),
\end{equation}
where the implied constant in~\eqref{eq:ubidef} is independent of $B_0$,
then the family $\cR=\{R_j\colon j\in J \}$ is said to be a
{\em{(strongly) ubiquitous system with respect to the function}}
$\rho$ and the weight $\lfloor\,\cdot\,\rfloor$. The idea here is that
the family of near-resonant balls $ B(R_j,\rho(N))$ meets the
arbitrary ball $B_0$ in $\Omega$ substantially and covers it at least
partially in measure.  This can be regarded as a fairly general
Dirichlet-type condition in which a `significant' proportion of points
is close to some resonant point $R_j$.  It is evident that we want
$\rho$ as small as possible.  Note that in~\cite{DRV90a}, $\rho$ was
required to be decreasing; this condition is no longer required in the
improved formulation in~\cite{BDV06}.  In applications, $\rho$ can
often be chosen to be essentially a simple function, such as a
power. In particular, for quaternionic \DA, the choice of exponent is
2 (see~\S\ref{subsec:eta}).  This is the same exponent as the ubiquity
function for rational approximation on the real line $\R$ and is quite
different from that for simultaneous rational approximation
(see~\S\ref{subsec:sdaR4}).  The reason goes back to the similarity
between the Dirichlet's theorems for the two spaces.

The set of points in $\Omega$ which are $\Psi$-approximable by the
family $\cR=\{\cR_j\colon j\in J\}$ with respect to the weight
$\lfloor \cdot \rfloor$ is defined by
\begin{equation}
\label{eq:Lambda}
  \Lambda(\Psi): = \{\xi\in\Omega \colon \xi\in B(R_j,\Psi(\lfloor j\rfloor))
{\text{ for infinitely many }} j\in J\}. 
\end{equation}
If the family $\cR$ is a ubiquitous system with respect to a suitable 
 $\rho$ and weight, then the metrical structure of
$\Lambda(\Psi)$ can be determined.  Note that the set on the right hand
side of
(\ref{eq:Lambda}) can be rewritten as a `limsup' set (and hence falls
into the ambit of the framework in~\cite{BDV06}) as follows,
\begin{equation*}
  \label{eq:limsupL}
   \Lambda(\Psi)=\bigcap_{N=1}^\infty \bigcup_{m=N}^\infty
\bigcup_{\lfloor j\rfloor] = m} B(R_j,\Psi(\lfloor j\rfloor))
= \bigcup_{\lfloor j\rfloor\geqslant N} 
B(R_j,\Psi(\lfloor j\rfloor)) =\limsup_{\lfloor j\rfloor\to\infty} 
B(R_j,\Psi(\lfloor j\rfloor)).
\end{equation*}
Thus for each $N=1,2,\dots$, we have
\begin{equation*}
  \label{eq:natcover}
  \Lambda(\Psi)\subseteq 
\bigcup_{\lfloor j\rfloor\geqslant N} 
B(R_j,\Psi(\lfloor j\rfloor))
=\cC_N,
\end{equation*}
where $\cC_N=\{B(R_j,\Psi(\lfloor j\rfloor))\colon \lfloor j\rfloor
\geqslant N \}$ is the natural cover for $\Lambda(\Psi)$;
the cover for $\cV(\Psi)$ given in~\eqref{eq:cover} is a special case.

\section{ The proof of \K's theorem for $\H$ when the
  critical sum converges}
\label{sec:Kconvergent}

The straightforward proof follows from~\eqref{eq:limsupV} and the form
of the natural cover $\cC_N(\cV(\Psi))$ for
$\cV(\Psi)$~\eqref{eq:cover}.  It follows by~\eqref{eq:|BRq|} that for
each $N=1,2,\dots$, the Lebesgue measure of $\cV(\Psi)$ satisfies
\begin{equation} 
\label{eq:uppbd|WPsi|1}
  \vert \cV(\Psi) \vert \leqslant
  \sum_{m=N}^\infty 
  \sum_{ m\leqslant |\q|_2 < m+1} 
\kern-2mm\vert B(\cR_\q,\Psi(\vert \q\vert_2))\vert \ll 
  \sum_{m=N}^\infty
  \sum_{ m \leqslant |\q|_2 < m+1}  \kern-2mm|\q|_2^4 \ \Psi(|\q|_2)^4
\end{equation} 
By~\cite[Th.~386]{HW}, the number $r_4(m)$ of Hurwitz integers $\q$
with $|\q|_2^2=m$ is given by 
\begin{equation*}
  \label{eq:r4m}
  r_4(m) =  8\kern-2mm\sum_{d|m,\, 4 \not \; | \, d} \kern-2mmd
\end{equation*}
but for our purposes a simpler estimate suffices.  By volume
considerations,
\begin{equation*}
   \sum_{|\q|_2 < m+1} \kern-3mm1 = \frac{\pi^2}{2}\,  2\, m^4+O(m^{3}) 
\sim   \pi^2 m^4 ,
\end{equation*}
whence for each $m\in\N$, 
\begin{equation*}
  \sum_{m  \leqslant |\q|_2 < m+1} \kern-3mm1 =  
\kern-3mm\sum_{|\q|_2 < m+1} \kern-3mm1 - \kern-2mm\sum_{|\q|_2 < m} \kern-2mm1   
  \ll m^3,
\end{equation*}
where in the sum on the left hand side, $|\q|_2$ ranges over the $2m+1$ values
$$
m,\sqrt{m^2+1},\dots, \sqrt{(m+1)^2-1}.
$$ 
But $\Psi(|\q|_2) := \Psi([|\q|_2]) =\Psi(m)$ when $ m \leqslant
|\q|_2 < m+1$, so that 
\begin{equation}
\label{eq:uppbd|WPsi|2}
  \vert \cV(\Psi) \vert  
\ll  \sum_{m=N}^\infty \Psi(m)^4  (m+1)^4 
\kern-3mm\sum_{m \leqslant |\q|_2 < m+1} \kern-3mm1
\ll\sum_{m=N}^\infty \Psi(m)^4  m^7. 
\end{equation} 
Thus for each $N=1,2,\dots$, the measure of $\cV(\Psi)$ satisfies
\begin{equation*} 
\label{eq:upperbd|W*|}
  \vert \cV(\Psi) \vert  \ll
  \sum_{m=N}^\infty \Psi(m)^4 m^7 . 
\end{equation*}
Since $N$ is
arbitrary, if the critical sum~\eqref{eq:critsumLq} converges then
the tail
$\sum_{m=N}^\infty \Psi(m)^4 m^7 $ converges to 0 and
$\cV(\Psi) $ is a null set, \ie, 
\begin{displaymath}
  |\cV(\Psi)|=|\cW(\Psi)|=0. 
\end{displaymath}
This is the convergence part of Theorem~\ref{thm:qKT}, the
quaternionic analogue of Khintchine's theorem.  Note that since
$\cV\,'(\Psi)\subset \cV(\Psi)$, the convergence of the critical sum
implies that $|\cV'(\Psi)|=|\cW\, '(\Psi)|=0$ also. 

\section{The proof when the critical sum diverges}
\label{sec:divergentsum}
  The  case of divergence is much more difficult.  
 The ideas involved, particularly ubiquity
(see~\S\ref{sec:ubiquity}) and the remarkable 
Mass Transference Principle 
(see~\S\ref{subsec:mtp2}), require some further definitions and
notation. 
First we explain the  principle in a simple setting
to clarify the ideas and give an application to indicate its power.
Then we explain ubiquity.

\subsection{The Mass Transference Principle}
\label{subsec:mtp2}

The Mass Transference Principle, introduced by Beresnevich and Velani
in~\cite{BV06}, is a remarkable technique which allows Lebesgue
measure results for lim sup sets to be transferred to Hausdorff
measures.  A version adapted to our purposes is now given.  Let $n$ be
a non-negative integer and let $f$ be a dimension function
(see~\S\ref{sec:MaD}) such that $x^{-n}f(x)$ is
monotonic. For any ball $B=B(c,r)$ centred at $c$ and radius
$r$, let
\begin{equation*}
  \label{eq:6}
  B^f:=B(c,f(r)^{1/n}).
\end{equation*}
When  for $s>0$, $f(x)=x^{-s}$, write $B^f=B^s$, so that
$B^s(c,r)=B(c,r^{s/n})$; note that $B^n=B$.  Similarly for a
family~$\cB=\{B(c_i,r_i)\}$ of balls in $\Omega$, let
\begin{equation*}
  \label{eq:family}
  {\cB}^f:=\{B(c_i,f(r_i)^{1/n})\},
\end{equation*}
so that $\cB^n= \cB$.  Let $\{\cB_i=\cup_j B(c_{i_j},r_{i})\colon
i\in\N\}$ be a family of finite unions of balls $ B(c_{i_j},r_{i})$ in
$\R^n$ with the same radius $r_i\to 0$ as $i\to\infty$.  Suppose that
for any ball $B_0\in \R^n$,
\begin{equation*}
  \label{eq:MTP1}
  |(B_0\cap\limsup_{i\to\infty} \cB_i^f)|= |B_0|.
\end{equation*} 
Then the  Mass Transference Principle asserts that 
\begin{equation*}
  \label{eq:8}
  \sH^f(B_0\cap \limsup_{i\to\infty} \cB_i)=\sH^f(B_0).
\end{equation*}
Thus the appropriate version of Khintchine's theorem would imply \Ja's
$f$-measure theorem.  This has not been proved for quaternions but~\eqref{eq:appconst} 
% Speiser and Schmidt eqn
% a version
% of Dirichlet's theorem has (Theorem~\ref{thm:QDT}) and 
can be used with the mass transference principle to prove
Theorem~\ref{thm:qJBT}, the quaternionic analogue of the \JB{}
theorem.

\subsection{An application to $\cV_v$\,: the quaternionic \JB{} theorem} 
\label{subsec:appnpowerlaw}
In \S\ref{subsec:mtp2}, take $n=4$, $f(x)=x^s$, $s < 4$ and
$\Psi(x)=x^{-v}$, $v>0$, $c_i=\pq$ (recall that $\p,\q$
  are not necessarily coprime) and $r_i=\Psi(|\q_2|)$.  Let the set
$\cB_\q:=\cB(\cR_\q,|\q|_2^{-v})$ correspond to the set $\cB_i$
in~\S\ref{subsec:mtp2} and $ \cB_\q^{8/v}=\cB(\cR_\q,|\q|_2^{-2}) $
correspond to $\cB_i^{8/v}$.

Suppose $v\leqslant 2$. Then by~\eqref{eq:V2W2}, 
\begin{equation*}
 \cV_v= \limsup_{|\q_2|\to\infty} \cB(\cR_\q,|\q|_2^{-v}) =
 \overline\Delta, 
\end{equation*}
whence {\it a fortiori}, $|\cV_v|=
%|\limsup_{|\q|_{_2}\to\infty} \cB_\q|
% =|\limsup_{|\q_2|\to\infty} \cB(\cR_\q,|\q|_2^{-v})|
|\overline\Delta| = 1/2 
$
and $ \dim \cV_v= 4$.

Suppose $v>2$. By the definition of $\cB_\q^{8/v}$ and
by~\eqref{eq:QDio}, 
%Theorem~\ref{thm:QDT} or~\eqref{eq:W2full},
 \begin{equation*}
   \limsup_{|\q|_{_2}\to\infty}  \cB_\q^{8/v} = \limsup_{|\q|_{_2}\to\infty}
\cB(\cR_\q,|\q|_2^{-2}) = \overline\Delta,
 \end{equation*}  
whence  %$  |\limsup_{|\q|_{_2}\to\infty}  \cB_\q^{8/v}| = 1/2 $
 \begin{equation*}
  \left|B_0\cap \limsup_{|\q|_{_2}\to\infty}  \cB_\q^{8/v}\right| = 
\left|B_0\cap\overline\Delta\right|=\left|B_0\right|.
 \end{equation*}  
 It follows by the Mass Transference Principle that 
\begin{equation*}
 \sH^{8/v}\left(B_0\cap \limsup_{|\q|_{_2}\to\infty} \cB_\q \right) =
 \sH^{8/v}\left(B_0\cap \cV_v\right) =
 \sH^{8/v}(B_0)=\infty
\end{equation*}
since $B_0$ is open  and $8/v<4$.  But $ B_0\cap
\cV_v \subset \cV_v$, whence for $s\leqslant 8/v$,
\begin{equation*}
  \sH^s(\cV_v)=  \sH^{8/v}(\cV_v)=\infty
\end{equation*}
 and from the definition of \HD,  $\hdim \cV_v \geqslant 8/v$.
\vspace{0.05in}

Next suppose $s>8/v$.  By~\eqref{eq:cover}, for each $N=1,2,\dots$,
the family of balls
\begin{equation*}
  \{ B(\pq,\vert \q\vert_2^{-2})  
\colon |\p|_2\leqslant |\q|_2, \vert \q\vert_2 \geqslant N\}
\end{equation*}
is a cover for $\cV_v$.  Hence by~\eqref{eq:Hfmeasure}, for each
$N\in\N$,
\begin{eqnarray*}
 \sH^s(\cV_v)&\leqslant&
\sum_{m=N}^\infty \sum_{m \leqslant|\q|_2 < m+1} 
\sum_{\p\colon \p\in \overline\Delta\,\q}
\kern-2mm\left(\di B(\pq,\vert \q\vert_2^{-v})\right)^{s} 
\\ 
&\ll&  \sum_{m=N}^\infty  \sum_{m \leqslant |\q|_2 < m+1} 
\sum_{\p\colon \p\in \overline\Delta\,\q} 
\kern-2mm|\q|_2^{-sv}
\ll  \sum_{m=N}^\infty \sum_{m \leqslant |\q|_2 < m+1}
  \kern-3mm|\q|_2^4 |\q|_2^{-sv} \\
&\ll& \sum_{m=N}^\infty m^{4-sv} \kern-3mm\sum_{m \leqslant |\q|_2 <
  m+1}  \kern-4mm1 
\ll  \sum_{m=N}^\infty m^{7-vs} \to 0 \ {\text{as}} \ N \to \infty,
\end{eqnarray*}
since $s>8/v$. Thus $\sH^s(\cV_v)=0$ for $s>8/v$ and $\hdim \cV_v \leqslant
8/v$. Combining the values of  $\sH^s(\cV_v)$ gives for 
$v>2$,
\begin{equation*}
    \sH^s( \cW_v)= 
    \sH^s( \cV_v)= 
    \begin{cases}
      \infty & \text{ when } s\leqslant \frac{8}{v}, \\
      0 & \text{ when } s>\frac{8}{v},
    \end{cases}
\end{equation*}
which is  Theorem~\ref{thm:qJT},  which in turn implies 
Theorem~\ref{thm:qJBT}, the quaternionic \JB{} theorem. 
Note that the Hausdorff $s$-measure is infinite at $s=\hdim \cV_v$.

Since $\Psi$ is decreasing and $f$ increasing, the composition 
  $f\circ\Psi$ is decreasing. Thus Khintchine's Theorem for
  quaternions (Theorem~\ref{thm:qKT}) implies that when the sum
  $\sum_m f(\Psi(m)) m^7$ diverges,
\begin{equation*}
  |B_0\cap (\limsup_{|\q|_2\to \infty}
\cB(\cR_\q,f(\Psi(|\q|_2))))| = |B_0|. 
\end{equation*}
Hence  by the Mass Transfer Principle, 
\begin{equation*}
  \sH^f(B_0\cap \cV(\Psi) ) = \sH^f(\overline\Delta),
\end{equation*}
so that divergent case of Khintchine's theorem implies that of \Ja's
$f$-measure theorem.  However, this case of Khintchine's
theorem needs to be proved and more ideas are needed to deal with the
general decreasing approximation function $\Psi$.

\subsection{The  quaternionic \K{} theorem in the divergent case}

The objective here is to complete the determination of the Lebesgue
and Hausdorff measures of $\Psi$-approximable quaternions when the
critical sums diverge.  We recall that the quaternions $\quat$ form a
4-dimensional metric space which naturally carries Lebesgue measure.
It is convenient to work with the compact set $[0,1]^3\times [0,1/2]
=\overline\Delta$, given in~\S\ref{subsec:resnearesets} above, for
$\Omega$ and with the set of $\Psi$-approximable quaternions in
$\overline\Delta$, \ie, with $\cV(\Psi)=\cW(\Psi)\cap
\overline\Delta$ instead of with $\cW(\Psi)$.

We begin by stating a simplified version of the Beresnevich-Velani
theorem~\cite[Th.~3]{BVParis2009a} for the ubiquitous systems
described in~\S\ref{sec:ubiquity}, and then deduce the analogue of
\K's theorem for $\quat$ in the divergence case. The
Beresnevich-Velani theorem holds for a compact metric space with a
measure comparable to Lebesgue measure. The theorem can be regarded as
a general \K-\Ja{} result and illustrates the power of ubiquity and mass
transfer (see~\S\ref{subsec:mtp2} above). Note  that in addition
to converging to 0 at infinity, the ubiquity function $\rho$ must also
satisfy the technical condition that  for some positive constant
$c<1$,
 \begin{equation}
  \label{eq:ubi2decay}
  \rho(2^{r+1})\leqslant c \rho(2^r)
\end{equation} 
for $r$ sufficiently large.
Such functions will be called {\em{dyadically decaying}}, a condition
which is satisfied in the applications considered here.  This
condition is weaker than the requirement in earlier work (see for
example~\cite{DRV90a}) that $\rho$ be decreasing. Note that the
definition in~\cite{BDV06} is more general: $\rho$ is `$u$-regular', a
condition which involves a sequence $(u_n\colon n\in\N)$.  The very
general Theorem~2 in~\cite{BDV06} could also be used to first prove
the analogue of \K's theorem and then the analogue of the \KJ{}
theorem deduced via mass transference~\S\ref{subsec:mtp2}.  
The dyadic decay condition can be imposed on $\Psi$ instead.

\begin{theorem}[Beresnevich-Velani]
\label{thm:BV}
Let $(\Omega,d)$ be a compact metric space equipped with a Borel
measure $\mu$ which for some $\delta>0$ satisfies 
\begin{equation}
\label{eq:mBrd}
 \mu(B(\xi,r))\asymp r^{\delta}
  \end{equation}
  for any sufficiently small ball $B(\xi,r)$ in $\Omega$.  Suppose
  that the family $\cR$ of resonant sets in $\Omega$ is a strongly
  $\mu$-ubiquitous system relative to the dyadically decaying function
  $\rho$ and that $\Psi$ is a decreasing approximation function.  Let
  $f$ be a dimension function with $f(x)/x^{\delta}$ monotonic.
If for some $\kappa>1$, the ubiquity sum
   \begin{equation}
      \label{eq:critubisum1}
    \sum_{m=1}^\infty
     \frac{f(\Psi(\kappa^m))}{\rho(\kappa^m)^{\delta}}
   \end{equation}
diverges, then the  Hausdorff $f$-measure $\sH^f(\Lambda(\Psi))$
is given by 
 \begin{equation*}
   \label{eq:mumeas1}
   \sH^f(\Lambda(\Psi))=\sH^f(\Omega).
 \end{equation*}
\end{theorem}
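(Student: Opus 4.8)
The plan is to derive Theorem~\ref{thm:BV} directly from the general ubiquity theorem of Beresnevich and Velani, \cite[Theorem~3]{BVParis2009a}, by checking that each hypothesis above is a special case of the hypotheses there and by reconciling the two forms of the ubiquity (divergence) sum. First I would recall the setting of \cite{BVParis2009a}: a compact metric measure space $(\Omega,d,\mu)$ with $\mu$ doubling, a countable family of resonant sets $\cR=\{R_j\}$ with weights $\lfloor j\rfloor$, a ubiquity function $\rho$, and a dimension function $f$; under a local $\mu$-ubiquity hypothesis, a scaling condition relating $\mu(B(x,\lambda r))$ to $\lambda$ and $\mu(B(x,r))$, a regularity condition on $\rho$ along a suitable sequence, and divergence of the associated discrete sum, one obtains $\sH^f(\Lambda(\Psi))=\sH^f(\Omega)$.

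Next I would verify the measure hypothesis. The condition $\mu(B(\xi,r))\asymp r^{\delta}$ says exactly that $\mu$ is Ahlfors $\delta$-regular; in particular $\mu$ is doubling, $0<\mu(\Omega)<\infty$ with $\mu$ comparable to $\sH^{\delta}$ on $\Omega$, and $\mu(B(x,\lambda r))\asymp\lambda^{\delta}\mu(B(x,r))$ for $0<\lambda\le 1$, which is precisely the scaling needed in \cite[Theorem~3]{BVParis2009a}. Since our resonant sets $R_j=\cRq$ are finite sets of points (dimension zero), the auxiliary thickening/contracting conditions of the general theorem hold trivially. The hypothesis that $\cR$ is a strongly $\mu$-ubiquitous system relative to $\rho$ and the weight $\lfloor\cdot\rfloor$, as defined in \S\ref{sec:ubiquity} via~\eqref{eq:ubidef}, is exactly the local ubiquity condition required, and the monotonicity of $f(x)/x^{\delta}$ is the dimension-function condition of \cite{BVParis2009a}.

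The one genuinely technical point is reconciling the ubiquity sum. The general theorem phrases divergence in terms of a sum along a regular sequence; the dyadic decay condition~\eqref{eq:ubi2decay} puts $\rho$ in the admissible class along $u_n=2^n$. I would then show that for any fixed $\kappa>1$, divergence of $\sum_m f(\Psi(\kappa^m))/\rho(\kappa^m)^{\delta}$ is equivalent to divergence of the corresponding sum over powers of $2$, and hence to the condition appearing in \cite[Theorem~3]{BVParis2009a}. This is a Cauchy-condensation-type argument: $\Psi$ is decreasing and $f$ increasing, so $f\circ\Psi$ is decreasing; the dyadic decay of $\rho$ forces $\rho(t)\asymp\rho(2^{\lfloor\log_2 t\rfloor})$ with implied constants depending only on $\kappa$ and $c$; grouping the terms of one sum into blocks indexed by the other shows the two sums converge or diverge together, and the same mechanism shows the conclusion is independent of the choice of $\kappa>1$. (As noted in the text, the dyadic decay could instead be imposed on $\Psi$, and the same reduction applies.)

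Finally, with all hypotheses matched, \cite[Theorem~3]{BVParis2009a} yields $\sH^f(\Lambda(\Psi))=\sH^f(\Omega)$, i.e. $\Lambda(\Psi)$ has full Hausdorff $f$-measure in $\Omega$, as claimed. I expect the main obstacle to be bookkeeping rather than conceptual: carefully translating between the normalisations and the precise discrete ubiquity sum of \cite{BVParis2009a} and the $\kappa$-parametrised sum used here, and confirming that Ahlfors $\delta$-regularity indeed supplies every measure-theoretic ingredient (doubling, the $\lambda^{\delta}$ scaling, comparability of $\mu$ and $\sH^{\delta}$) invoked in that theorem.
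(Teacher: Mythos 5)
This statement is not proved in the paper at all: Theorem~\ref{thm:BV} is introduced as a simplified version of \cite[Theorem~3]{BVParis2009a}, adapted to the ubiquitous systems of \S\ref{sec:ubiquity}, which is precisely the reduction you propose. Your route --- checking that the condition $\mu(B(\xi,r))\asymp r^{\delta}$ supplies the measure-theoretic hypotheses, that strong ubiquity and the monotonicity of $f(x)/x^{\delta}$ match the cited hypotheses, and that the dyadic decay of $\rho$ reconciles the $\kappa$-adic ubiquity sum with the form used there --- is essentially the same approach as the paper's, with the hypothesis verification made explicit rather than left implicit in the citation.
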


The hypotheses of Theorem~\ref{thm:BV} imply that $\mu$ is comparable
to the $\delta$-dimensional Hausdorff 
measure $\sH^\delta$ and that
$\dim \Omega=\delta$.  Note that in the \BVt, the
sum~\eqref{eq:critubisum1} is `$\kappa$-adic', whereas we have been
working with `standard' sums such as~\eqref{eq:critsumLq}.  By the
choice of $\rho$ we will make and by
Lemmas~\ref{lem:eta} and~\ref{lem:rH}, the
critical sum~\eqref{eq:critsumLq} will be comparable to the ubiquity
sum~\eqref{eq:critubisum1}.

\subsection{Perturbing divergent sums}
\label{subsec:eta}
The following lemma, drawn from~\cite{Casselshort}, is needed to
construct the ubiquity function $\rho$.

\begin{lemma}
\label{lem:eta}
  Let $F\colon \N \to (0,\infty)$ satisfy $\sum_{m=1}^\infty
  F(m)=\infty$.  Then there exists a  decreasing function $\eta\colon
  \N\to [0,1]$ with
  $\eta(m)=o(1)$, such that 
for any $\alpha>0$, the sequence $m\eta(m)^\alpha\to \infty$ as
$m\to\infty$,  $\eta(2^r)\leqslant 2 \eta(2^{r+1})$ 
and  such that $ \sum_{m=1}^\infty F(m)\,\eta(m) = \infty$, $r=1,2,\dots$.

\end{lemma}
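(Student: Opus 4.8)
The plan is to build $\eta$ by a telescoping construction on dyadic blocks, so that all the required properties follow from a single monotone choice of parameters. Since $\sum_m F(m)=\infty$, for each $k\ge 1$ we can choose a strictly increasing sequence of integers $M_0=1<M_1<M_2<\cdots$ with the property that $\sum_{m=M_{k-1}}^{M_k-1} F(m)\ge 1$. On the block $M_{k-1}\le m<M_k$ define $\eta(m):=2^{-k}$ (one may arrange, by inserting extra break‑points if necessary, that $M_k$ be a power of $2$, or simply check the dyadic condition directly as below). Then $\eta$ is decreasing, takes values in $(0,1]$, and $\eta(m)=o(1)$ because $\eta(m)\le 2^{-k}\to 0$ as $m\to\infty$. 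Moreover
\begin{equation*}
\sum_{m=1}^\infty F(m)\,\eta(m)=\sum_{k=1}^\infty 2^{-k}\!\!\sum_{m=M_{k-1}}^{M_k-1}\!\!F(m)
\end{equation*}
is \emph{not} obviously divergent with this naive choice — the factor $2^{-k}$ kills it — so the first fix is to let $\eta$ decay much more slowly: replace $2^{-k}$ by $1/\sqrt{k}$ (or more generally any sequence $\ve_k\downarrow 0$ with $\sum_k \ve_k=\infty$), i.e. set $\eta(m):=1/\sqrt{k}$ for $M_{k-1}\le m<M_k$. Then $\sum_m F(m)\eta(m)\ge\sum_k \ve_k\cdot 1=\infty$ as required, while still $\eta(m)=o(1)$ and $\eta$ decreasing.

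The remaining two conditions are the growth condition $m\,\eta(m)^\alpha\to\infty$ for every $\alpha>0$, and the dyadic near‑constancy $\eta(2^r)\le 2\eta(2^{r+1})$. The growth condition is where the block lengths $M_k-M_{k-1}$ must be controlled: on the $k$-th block $m\ge M_{k-1}$ and $\eta(m)=k^{-1/2}$, so $m\,\eta(m)^\alpha\ge M_{k-1}\,k^{-\alpha/2}$, and this tends to infinity provided we choose the $M_k$ growing fast enough, e.g. $M_{k-1}\ge k^{k}$ (which is compatible with, and strictly stronger than, the defining requirement $\sum_{M_{k-1}}^{M_k-1}F(m)\ge 1$, since we are always free to take $M_k$ larger). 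With $M_{k-1}\ge k^k$ we get $M_{k-1} k^{-\alpha/2}\to\infty$ for each fixed $\alpha$, uniformly over the block, hence $m\eta(m)^\alpha\to\infty$. For the dyadic condition: if $2^r$ and $2^{r+1}$ lie in the same block then $\eta(2^r)=\eta(2^{r+1})$ and there is nothing to prove; if they lie in consecutive blocks $k$ and $k+1$ (they cannot be further apart once the $M_k$ are increasing, because each block, having length at least $1$, … — more carefully: once $M_k-M_{k-1}\ge M_{k-1}$, which holds since $M_k\ge (k+1)^{k+1}>2M_{k-1}$ for large $k$, no dyadic pair $2^r,2^{r+1}$ can skip a block), then $\eta(2^r)/\eta(2^{r+1})=\sqrt{(k+1)/k}\le\sqrt 2<2$. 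For the finitely many small $r$ one adjusts the first few values of $\eta$ by hand, which does not affect divergence of $\sum F(m)\eta(m)$.

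The main obstacle — and the only genuinely delicate point — is the simultaneous reconciliation of the three competing demands: $\eta$ must decay slowly enough that $\sum F(m)\eta(m)=\infty$, yet the \emph{blocks on which it is constant} must be long enough (i.e. the $M_k$ must grow fast enough) that $m\,\eta(m)^\alpha\to\infty$ for \emph{all} $\alpha$, while the block lengths must also be $\ge$ those forced by $\sum_{M_{k-1}}^{M_k-1}F(m)\ge 1$, which is a \emph{lower} bound and hence harmless — we may always enlarge $M_k$. The key realisation that makes everything consistent is that the two constraints on the $M_k$ point the same way (both ask for $M_k$ large), and the constraint on the decay rate $\ve_k$ (slow) is independent of the $M_k$. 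So one fixes $\ve_k=k^{-1/2}$ first, then chooses $M_k$ recursively large enough to satisfy both $\sum_{m=M_{k-1}}^{M_k-1}F(m)\ge 1$ and $M_k\ge (k+1)^{k+1}$; the ``$\eta(2^r)\le 2\eta(2^{r+1})$'' clause then comes for free from $\ve_k/\ve_{k+1}=\sqrt{(k+1)/k}\to 1$ together with the blocks eventually being longer than any dyadic gap. Strictly, one should note that $\eta(m)=o(1)$ and ``$m\eta(m)^\alpha\to\infty$'' together say $\eta$ sits, in a precise sense, strictly between the constant sequence and $m^{-\ve}$ for every $\ve>0$; the logarithmic‑type decay $k^{-1/2}$ on exponentially (indeed super‑exponentially) long blocks realises exactly such an $\eta$.
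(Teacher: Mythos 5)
Your construction is essentially the paper's: a step function that is constant on blocks $[M_{k-1},M_k)$, each block carrying $F$-mass at least $1$, with a value $\ve_k$ on block $k$ whose series diverges (you take $\ve_k=k^{-1/2}$, the paper takes $\ve_k=1/k$), divergence of $\sum F\eta$ then being immediate, and the growth condition $m\,\eta(m)^\alpha\to\infty$ coming from forcing the block endpoints to grow fast. Up to these cosmetic choices the argument is the same.

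There is, however, a gap in your verification of the clause $\eta(2^r)\le 2\eta(2^{r+1})$. You reduce it to the claim that no dyadic pair $2^r,2^{r+1}$ can skip a block, and you justify this by asserting $M_k\ge (k+1)^{k+1}>2M_{k-1}$ for large $k$. That inequality is not guaranteed by your recursion: the requirement $\sum_{m=M_{k-1}}^{M_k-1}F(m)\ge 1$ can force some $M_{k-1}$ to be astronomically larger than $k^k$ (take $F$ extremely small on a long initial stretch and equal to $1$ afterwards), after which nothing you have imposed prevents a long run of blocks of length $1$; a dyadic pair then straddles many blocks, and $\eta(2^r)/\eta(2^{r+1})=\sqrt{(k+j)/k}$ can exceed $2$. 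The repair is easy and is exactly what the paper builds in from the start: add $M_k\ge 2M_{k-1}$ to the recursive requirements (always possible, since enlarging $M_k$ only helps the sum condition and the growth condition). With $M_k\ge 2M_{k-1}$, consecutive powers of $2$ lie in the same or adjacent blocks, so the ratio is $1$ or $\sqrt{(k+1)/k}\le\sqrt2<2$, and incidentally $M_{k-1}\ge 2^{k-1}$ already gives $m\,\eta(m)^\alpha\ge 2^{k-1}k^{-\alpha/2}\to\infty$ without invoking $k^k$. As written, though, the dyadic-decay step does not follow from the conditions you actually imposed.
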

\begin{proof}
  Since $\sum_{m=1}^\infty F(m)=\infty$, we can choose a strictly
  increasing sequence $(m_i\colon i=1,2,\dots)$ with $m_1=1$ such that 
$m_{i+1}\geqslant 2m_i\geqslant \dots\geqslant 2^i$ and 
  \begin{equation*}
    \sum_{m_{i} \leqslant m < m_{i+1}} F(m) > 1 .
  \end{equation*}
Define $\eta\colon \N\to [0,1]$ by
\begin{equation}
\label{eq:eta}
  \eta(m)=i^{-1}, \ m\in [m_{i}, m_{i+1}). 
\end{equation}
Evidently  $\eta$ is   decreasing, $o(1)$ and
\begin{equation*}
  m\,\eta(m)^\alpha \geqslant \frac{m_i}{i^\alpha} \geqslant 2^i \,i^{-\alpha} \to \infty
\end{equation*}
as $i$ and hence $m\to\infty$.  In addition, if $\eta(2^r)= 1/i$, then
by the choice of the intervals $[m_1,m_{i+1})$, 
  $\eta(2^{r+1})= 1/i$ or $1/(i+1)$, whence
  $ \eta(2^{r+1})\leqslant \eta(2^r)\leqslant 2 \eta(2^{r+1})$. 

Moreover  
\begin{equation*}
\sum_{m=1}^\infty F(m)\eta(m) =   
\sum_{i=1}^\infty \,\sum_{m_{i} \leqslant m < m_{i+1}} \kern-3mm\eta(m)  F(m)
=  \sum_{i=1}^\infty i^{-1} \kern-3mm\sum_{m_{i} \leqslant m < m_{i+1}}  \kern-3mmF(m) >
\sum_{i=1}^\infty i^{-1} =\infty.
\end{equation*}
\end{proof}
Thus $F$ can be replaced by a smaller function $F\eta$ without
affecting the divergence of the sum. Clearly $\eta$ depends on $F$.

\subsection{The functions $\eH$ and $\rh$}
\label{subsec:etaH}

Let $\eH=\eH(F)$ be the function in Lemma~\ref{lem:eta} corresponding
to $F(m)=f(\Psi(m))\,m^7$; recall that in the divergent case $
\sum_mf(\Psi(m))m^7=\infty$ by hypothesis. Define the function $\rh
\colon \N\to(0,1]$ by
\begin{equation}
  \label{eq:rH}
    \rh(m):=\frac{2}{\eH(m)^{1/4}m^2}\,.
\end{equation}
Then the function $\rh$ is the product of an inverse
  square and the slowly increasing function $1/\eH$. It turns out
that $\eH$ decreases sufficiently slowly to ensure that $\rh(m') \leqslant
2^{1/4} \rh(m)$ for $m'\geqslant m$ (so that $\rh$ is decreasing modulo
$2^{1/4}$).
\begin{lemma}
\label{lem:rH}
The function $\rh$ satisfies
\begin{enumerate}
\item   $\rh(m)= o(1)$,   
\item  $\rh(m)^{-1} = o(m^2)$,
\item   $\rh(m') \ll \rh(m)$ for all $m'\geqslant m$,
\item    $\rh$ decays dyadically.
\end{enumerate}
\end{lemma}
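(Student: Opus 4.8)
The four assertions about $\rh(m)=2\eH(m)^{-1/4}m^{-2}$ all follow directly from the two key properties of $\eH$ established in Lemma~\ref{lem:eta} (applied with $F(m)=f(\Psi(m))\,m^7$): namely that $m\,\eH(m)^\alpha\to\infty$ for every $\alpha>0$, and that $\eH(2^{r})\le 2\,\eH(2^{r+1})$, together with the fact that $\eH$ is decreasing and $o(1)$. So the proof is just a short sequence of elementary manipulations; there is no real obstacle, and the only thing to be careful about is tracking which property of $\eH$ is being invoked at each point.

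First I would prove (1) and (2) together. For (2), since $\eH(m)^{-1/4}$ is increasing (as $\eH$ decreases) but grows slowly, the point is exactly that $m\,\eH(m)^{1/4}\to\infty$: this is the $\alpha=1/4$ case of Lemma~\ref{lem:eta}. Hence $\rh(m)^{-1}=\tfrac12\eH(m)^{1/4}m^2$ satisfies $m^2/\rh(m)^{-1}=2\,\eH(m)^{-1/4}=2m\big/(m\,\eH(m)^{1/4})\to 0$, wait — more simply, $\rh(m)^{-1}/m^2=\tfrac12\eH(m)^{1/4}\to 0$ since $\eH=o(1)$, which is (2). Then (1) is immediate from (2): $\rh(m)=o(1/m^2)\cdot\text{(something)}$ — precisely, $\rh(m)\le 2\eH(m)^{-1/4}m^{-2}$ and we need this $\to 0$, i.e. $m^{2}\eH(m)^{1/4}\to\infty$, which is again the $\alpha=1/4$ case of Lemma~\ref{lem:eta} (note $m^2\eH(m)^{1/4}\ge m\,\eH(m)^{1/4}\to\infty$). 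So (1) and (2) reduce to the single input $m\,\eH(m)^{1/4}\to\infty$.

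Next, for (3): given $m'\ge m$, since $\eH$ is decreasing we have $\eH(m')\le\eH(m)$, hence $\eH(m')^{-1/4}\ge\eH(m)^{-1/4}$, so $\rh$ is \emph{not} monotonic in the naive direction; the content is the reverse bound. We have
\[
\frac{\rh(m')}{\rh(m)}=\Big(\frac{\eH(m)}{\eH(m')}\Big)^{1/4}\frac{m^2}{(m')^2}\le \Big(\frac{\eH(m)}{\eH(m')}\Big)^{1/4},
\]
so it suffices to bound $\eH(m)/\eH(m')$ by an absolute constant when $m'\ge m$. But by the explicit form $\eH(m)=i^{-1}$ on $[m_i,m_{i+1})$ from the proof of Lemma~\ref{lem:eta}, if $m\in[m_i,m_{i+1})$ and $m'\in[m_j,m_{j+1})$ with $j\ge i$, then $\eH(m)/\eH(m')=j/i\le j$, which is \emph{not} bounded — so instead I must use the $m^2/(m')^2$ factor. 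Since $m'\ge m_j\ge 2^{j-1}$ (from $m_{i+1}\ge 2m_i$) and $m<m_{i+1}\le\cdots$, a cleaner route is: the claim $\rh(m')\ll\rh(m)$ should really be read as the dyadic statement feeding (4), so I would prove (4) first and derive (3) as a consequence, or prove (3) directly from $m^2\eH(m)^{1/4}$ being eventually increasing. Concretely, $\rh(m)^{-1}=\tfrac12 m^2\eH(m)^{1/4}$ and one checks $m^2\eH(m)^{1/4}$ is increasing for large $m$: on each block $[m_i,m_{i+1})$ the factor $\eH$ is constant so $m^2\eH(m)^{1/4}$ increases, and across the jump from $m_{i+1}^-$ to $m_{i+1}$ the value of $m^2$ only grows while $\eH$ drops by a factor $i/(i+1)$, giving ratio $\big((i+1)/i\big)^{1/4}<2$ in the $\eH$ part against a $\ge 1$ gain in $m^2$ — actually the product still increases since $m^2$ jumps by a definite amount. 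So $\rh(m)^{-1}$ is, up to the bounded constant from block boundaries, increasing, whence $\rh(m')\ll\rh(m)$ for $m'\ge m$, which is (3).

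Finally (4): take $m=2^r$ and $m'=2^{r+1}$ in the above. Then $m^2/(m')^2=1/4$, and by Lemma~\ref{lem:eta} $\eH(2^r)\le 2\,\eH(2^{r+1})$, i.e. $\eH(2^r)/\eH(2^{r+1})\le 2$, so
\[
\frac{\rh(2^{r+1})}{\rh(2^r)}=\Big(\frac{\eH(2^r)}{\eH(2^{r+1})}\Big)^{1/4}\cdot\frac14\le \frac{2^{1/4}}{4}=2^{-7/4}<1,
\]
which is exactly the dyadic decay condition~\eqref{eq:ubi2decay} with $c=2^{-7/4}$ (valid for all $r\ge 1$, not merely $r$ large). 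This establishes (4). The main point throughout is that the slowly-varying correction $1/\eH$ introduced into the inverse-square $m^{-2}$ is, by construction in Lemma~\ref{lem:eta}, too small to destroy either the $o(1)$ behaviour, the $o(m^2)$ growth of $\rh^{-1}$, or the dyadic contraction coming from the $m^{-2}$ factor; no step presents a genuine difficulty.
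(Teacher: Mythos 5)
Parts (1), (2) and (4) of your proposal are correct and essentially the paper's own argument: (1) and (2) come from $m\,\eta(m)^{1/4}\to\infty$ and $\eta(m)=o(1)$, and your derivation of dyadic decay directly from the property $\eta(2^r)\le 2\,\eta(2^{r+1})$ of Lemma~\ref{lem:eta}, combined with the factor $1/4$ coming from $m^{-2}$, gives the same constant $2^{1/4}/4$ that the paper obtains by cases.

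The gap is in (3). Your final argument rests on the claim that $m^2\eta(m)^{1/4}$ is eventually increasing because ``$m^2$ jumps by a definite amount'' at a block boundary. That is false: at the step from $m_{i+1}-1$ to $m_{i+1}$ the gain in $m^2$ is a factor $\approx 1+2/m_{i+1}\le 1+2^{1-i}$ (since $m_{i+1}\ge 2^i$), while $\eta^{1/4}$ drops by the factor $\bigl(i/(i+1)\bigr)^{1/4}\approx 1-1/(4i)$, so for large $i$ the product strictly \emph{decreases} across the boundary. Only the weaker statement holds: the loss at each boundary is at most $\bigl((i+1)/i\bigr)^{1/4}\le 2^{1/4}$. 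But ``increasing up to a bounded constant per boundary'' does not by itself give $\rho(m')\ll\rho(m)$ uniformly, because between $m$ and $m'$ one may cross $j$ boundaries and the accumulated $\eta$-loss $\bigl((i+j)/i\bigr)^{1/4}$ is unbounded in $j$. You correctly noted at the outset that the factor $m^2/(m')^2$ has to carry the argument, but your concluding argument never uses it across more than a single jump. The repair is exactly the paper's multi-block case: since $m_{i+1}\ge 2m_i$, crossing $j\ge 2$ blocks forces $m'\ge 2^{j-1}m$, hence $(m/m')^2\le 2^{2-2j}$, which dominates $\bigl((i+j)/i\bigr)^{1/4}\le (1+j)^{1/4}$ and yields a uniform constant. (Your alternative suggestion of deducing (3) from (4) can also be made rigorous, because a dyadic interval $[2^r,2^{r+1})$ meets at most two of the blocks $[m_i,m_{i+1})$, so $\rho$ varies on it by a bounded factor; but that step must be stated and proved, not just gestured at.)
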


\begin{proof} 
 \noindent
 \begin{enumerate}
  \item 
  By Lemma~\ref{lem:eta}, $\eH(m)^{1/4} m \to \infty$ as $m\to\infty$,
  so $\rh(m):= 2(\eH(m)^{1/4}\,m^2)^{-1} \to 0$.
\item  Since $\rh(m):= 2(\eH(m)^{1/4}\,m^2)^{-1}$, we have that
$\rh(m)^{-1}m^{-2}=\eH(m)^{1/4}/2 \to 0$ as  $m\to \infty$.
   \item  Suppose $m\leqslant m'$.  We consider cases; recall $i\in\N$
   and that $m_{i+1}\geqslant 2m_i$. 
  When $m\leqslant m'$ and $m,m'\in [m_i,m_{i+1})$,
\begin{equation*} 
   \rh(m')=\frac2{\eH(m')^{1/4}{m'}^2}   = \frac{2i^{1/4}}{{m'}^2} \leqslant  
  \frac{2i^{1/4}}{{m}^2} = \rh(m).
 \end{equation*} 
If $m\in [m_i,m_{i+1})$ and $m'\in [m_{i+1},m_{i+2})$,
then 
\begin{equation*}
   \rh(m')= \frac{2(i+1)^{1/4}}{{m'}^2} <    
 \frac{2i^{1/4}} {m^2} \left(\frac{i+1}{i} \right)^{1/4}
\leqslant 2^{1/4} \rh(m),
\end{equation*}
since $(1+i)/i\leqslant 2$. In the remaining case 
$m\in [m_i,m_{i+1})$ and $m'\in [m_{i'},m_{i'+1})$, where 
$i'=i+j \geqslant i+2$, and $m$ and $m'$ satisfy 
$$
m'\geqslant m_{i+j}\geqslant 2^{j-1} m_{i+1} >  2^{j-1} m.
$$
It follows that 
\begin{equation*}
   \rh(m')= \frac{2{i'}^{1/4}}{{m'}^2} \leqslant   
 \frac{2(i+j)^{1/4}}{2^{2j-2}\, m^2} =
\frac{2i^{1/4}}{m^2} \,2^{-2j+2} \left(\frac{i+j}{i}
 \right)^{1/4} < \frac{3^{1/4}}{4}\, \rh(m) <\rh(m)
\end{equation*}
 for $i\geqslant 1, j\geqslant 2$.

\item 
To establish dyadic decay, first suppose $2^r, 2^{r+1}\in
  [m_i,m_{i+1})$.   Then 
  \begin{equation*}
      \rh(2^{r+1}) = \frac{2i^{1/4}}{2^{2(r+1)} } =
      \frac24\frac{i^{1/4}}{2^{2r}} =  \frac14\rh(2^r) .
  \end{equation*}
  Next suppose $2^{r}\in [m_i,m_{i+1})$ and $2^{r+1}\notin
  [m_i,m_{i+1})$. Then since $m_{i+2}\geqslant 2m_{i+1}$, it follows that
  $2^{r+1}\in [m_{i+1},m_{i+2})$ and
\begin{equation*}
  \rh(2^{r+1}) =   \frac{2(i+1)^{1/4}}{2^{2(r+1)}} =\frac24 
 \frac{i^{1/4}}{ 2^{2r}} \left(\frac{i+1}{i}\right)^{1/4}.
\end{equation*}
But $1<(1+i)/i\leqslant 2$ for $i\in\N$, whence for each $r\in\N$,
\begin{equation*}
  \frac14\,\rh(2^r) < \rh(2^{r+1}) 
\leqslant \frac{2^{1/4}}{4}\,\rh(2^{r})< \rh(2^{r}).
\end{equation*}
 Thus $\rh$ decays dyadically (see~\eqref{eq:ubi2decay}).  
  \end{enumerate}
\end{proof}

The main part of the proof is to use the quaternionic Dirichlet
Theorem (Theorem~\ref{thm:QDT}) to establish that the Hurwitz
rationals $\hurat$ form a ubiquitous system.
\begin{lemma}
  \label{lem:fQubi}
  The Hurwitz rationals $\hurat$ in $\overline \Delta$ are ubiquitous
  with respect to the function $\rh$ and the weight given by $\lfloor \q\rfloor
  = |\q|_2$.
\end{lemma}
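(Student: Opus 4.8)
The plan is to verify the ubiquity inequality~\eqref{eq:ubidef} directly from the quaternionic Dirichlet theorem (Theorem~\ref{thm:QDT}), with $\Omega=\overline\Delta$, $\mu$ Lebesgue measure ($\delta=4$), resonant sets $R_\q=\cR_\q$, weight $\lfloor\q\rfloor=|\q|_2$, and ubiquity function $\rho=\rh$ from~\eqref{eq:rH}. Fix an arbitrary ball $B_0=B(\xi_0,r)\subseteq\overline\Delta$. The goal is to show that for all large $N$,
\begin{equation*}
\Bigl|\,B_0\cap\bigcup_{1\le|\q|_2\le N}\cB\bigl(\cR_\q,\rh(N)\bigr)\Bigr|\gg|B_0|,
\end{equation*}
with the implied constant independent of $B_0$.

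First I would apply Theorem~\ref{thm:QDT} with the integer parameter $N$: every $\xi\in B_0$ admits $\pb,\q\in\hur$ with $1\le|\q|_2\le N$ and $|\xi-\pq|_2<2/(|\q|_2N)$. The trouble is that this alone does not place $\xi$ in a near-resonant ball of the fixed radius $\rh(N)$, because $2/(|\q|_2N)$ can exceed $\rh(N)$ when $|\q|_2$ is small relative to $N$. So I would split $B_0$ into the set $G$ of $\xi$ for which the Dirichlet denominator can be taken with $|\q|_2$ comparably large --- say $|\q|_2\ge\tfrac12N$ --- and the complementary set $G'$, where $\xi$ is very well approximated by some $\pq$ with $|\q|_2<\tfrac12N$. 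For $\xi\in G$ we get $|\xi-\pq|_2<2/(|\q|_2N)\le 4/N^2$, and by Lemma~\ref{lem:rH}(2) we have $\rh(N)^{-1}=o(N^2)$, so $4/N^2\le\rh(N)$ for $N$ large; hence $G\subseteq\bigcup_{|\q|_2\le N}\cB(\cR_\q,\rh(N))$ and the contribution of $G$ is exactly $|B_0|-|G'|$. It remains to bound $|G'|$ away from $|B_0|$, or rather to show $|G'|\le(1-c)|B_0|$ for a fixed $c>0$. For each fixed small denominator $\q$ with $|\q|_2<\tfrac12N$, the set of $\xi\in\overline\Delta$ within $2/(|\q|_2N)$ of some $\pq\in\cR_\q$ has measure $\asymp|\q|_2^4\cdot(|\q|_2N)^{-4}\asymp N^{-4}$ by~\eqref{eq:cardRq} and~\eqref{eq:4ballmeasure}; summing over the $\asymp N^4$ integers $\q$ with $|\q|_2<\tfrac12N$ gives $|G'|\ll 1$, which is useless for a small ball $B_0$. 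This is the main obstacle: a crude union bound over small denominators does not beat $|B_0|$.

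The resolution, following the standard ubiquity-from-Dirichlet argument (as in the real and complex cases, cf.~\cite{BDV06,DRV90a}), is to iterate Dirichlet or to use a pigeonhole/counting refinement inside $B_0$ itself rather than over all of $\overline\Delta$. Concretely, one observes that for $\xi\in G'$ the approximating rational $\pq$ with small denominator is essentially unique and forces $\xi$ to lie within $2/(|\q|_2N)$ of a specific point of the sparse lattice $\cR_\q$; the number of lattice points $\pq\in\cR_\q$ that can lie within $r$ of $\xi_0$ (hence be relevant to $B_0$) is $\ll|\q|_2^4 r^4+1$. Thus
\begin{equation*}
|G'|\ll\sum_{1\le|\q|_2<N/2}\bigl(|\q|_2^4r^4+1\bigr)\Bigl(\frac{2}{|\q|_2N}\Bigr)^4
\ll r^4\sum_{|\q|_2<N/2}\frac{1}{N^4}+\sum_{|\q|_2<N/2}\frac{1}{|\q|_2^4N^4},
\end{equation*}
and using $\sum_{|\q|_2<N/2}1\ll N^4$ and $\sum_{\q\ne0}|\q|_2^{-4}\cdot\ldots$ --- wait, that last sum is only $\ll N^{-4}\log N$, still too big. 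The honest fix is to choose the split point not at $\tfrac12N$ but at a large constant multiple: take $\xi\in G$ if Dirichlet succeeds with $|\q|_2\ge N/K$ for a large fixed $K$, and then show $|G'|\le\sum_{|\q|_2<N/K}|\cB(\cR_\q,2|\q|_2^{-1}N^{-1})|\ll\sum_{|\q|_2<N/K}|\q|_2^4(|\q|_2N)^{-4}\ll(N/K)^4\cdot N^{-4}=K^{-4}$, and separately intersect with $B_0$: since $B_0$ may be small this does not suffice, so instead one applies Theorem~\ref{thm:QDT} with parameter $\lfloor c_0/r\rfloor$ or similar, so that the Dirichlet inequality automatically produces $|\xi-\pq|_2<2|\q|_2^{-1}(c_0/r)^{-1}$, and then rescale; in the scaled picture $B_0$ becomes a ball of fixed size and the constant-fraction bound applies uniformly.

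So the key steps, in order, are: (i) fix $B_0=B(\xi_0,r)$ and apply Theorem~\ref{thm:QDT} with a parameter $N_0\asymp 1/r$ comparable to the reciprocal radius, giving for every $\xi\in B_0$ a rational $\pq$ with $1\le|\q|_2\le N_0$ and $|\xi-\pq|_2<2/(|\q|_2N_0)\ll r^2$; (ii) by Lemma~\ref{lem:rH}(2), $\rh(N)^{-1}=o(N^2)$, so for the "good" $\xi$ --- those whose Dirichlet denominator satisfies $|\q|_2\ge\varepsilon_0N$ --- we get membership in $\cB(\cR_\q,\rh(N))$ once $N$ is large; (iii) estimate the measure of the "bad" set $G'\cap B_0$, where the denominator is small, by counting only the lattice points of $\cR_\q$ meeting $B_0$ (at most $\ll|\q|_2^4 r^4$ of them when $|\q|_2\gtrsim 1/r$, and at most $O(1)$ when $|\q|_2\lesssim 1/r$) times the ball volume $\ll(|\q|_2N)^{-4}$, and sum over $|\q|_2<\varepsilon_0 N$ to get $|G'\cap B_0|\le\tfrac12|B_0|$ provided $\varepsilon_0$ is a sufficiently small absolute constant; (iv) conclude $|B_0\cap\bigcup_{|\q|_2\le N}\cB(\cR_\q,\rh(N))|\ge|B_0|-|G'\cap B_0|\ge\tfrac12|B_0|$, which is~\eqref{eq:ubidef}. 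The main obstacle is step (iii): making the counting of relevant resonant points inside $B_0$ precise enough --- via the lattice-point estimate~\eqref{eq:cardRq} applied to the scaled lattice $\cR_\q$ restricted to $B_0$, together with Lemma~\ref{lem:distdiff} to control multiplicity of approximants --- so that the bad set is a \emph{fixed} fraction of $|B_0|$ uniformly in $B_0$. Once the uniform constant is extracted, dyadic decay of $\rh$ (Lemma~\ref{lem:rH}(4)) is not needed here but will feed into Theorem~\ref{thm:BV}; here only Lemma~\ref{lem:rH}(2) and~(3) are used.
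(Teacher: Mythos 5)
You have the right skeleton --- Dirichlet at level $N$, a split of the denominators into ``large'' and ``small'', and a measure bound showing the small-denominator exceptional set occupies at most a fixed fraction of $B_0$ --- and this is indeed the strategy of the paper's proof. But the write-up breaks down at exactly the decisive step, and the repair you finally settle on is not coherent. The underlying misunderstanding concerns what~\eqref{eq:ubidef} demands: the implied constant must be independent of $B_0$, but the threshold in $N$ beyond which the inequality holds is allowed to depend on $B_0$. Because you do not use this, you discard an estimate that is in fact sufficient: with the cutoff at $\varepsilon_0 N$ your own refined count gives $|G'\cap B_0|\ll \varepsilon_0^4 r^4 + N^{-4}\log N$, and the first term is at most $\tfrac14|B_0|$ for $\varepsilon_0$ a small absolute constant, while the second tends to $0$ and hence is at most $\tfrac14|B_0|$ once $N\ge N_0(B_0)$ --- which is all that ubiquity requires. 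Instead you declare the $N^{-4}\log N$ term ``too big'' and retreat to a rescaling device in which Theorem~\ref{thm:QDT} is applied with parameter $N_0\asymp 1/r$. That cannot work as stated: Dirichlet at level $N_0$ only produces denominators $|\q|_2\le N_0$, so for $N\gg 1/r$ it yields no approximants with $|\q|_2\ge \varepsilon_0 N$, and the radius it controls is $\asymp r$, not $\rh(N)\to 0$; steps (i) and (ii) of your summary are therefore incompatible, conflating the Dirichlet parameter with the ubiquity level $N$.

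For comparison, the paper's resolution avoids any counting of resonant points inside $B_0$ altogether: it cuts not at a constant fraction of $N$ but at $\vpi(N)=\eH(N)^{1/4}N$ from~\eqref{eq:vpi}, which is $o(N)$ yet tends to infinity. The exceptional set $E(N)$ of points in $B_0$ approximable with some $|\q|_2<\vpi(N)$ then has measure $\ll N^{-4}\vpi(N)^4=\eH(N)\to 0$ absolutely, so for each fixed $B_0$ and all $N$ large depending on $B_0$ one has $|B_0\setminus E(N)|\ge\tfrac12|B_0|$, the uniform constant being $\tfrac12$; and by the very definition~\eqref{eq:rH} one has $2/(\vpi(N)N)=\rh(N)$ exactly, so every surviving point lies in $\bigcup_{1\le|\q|_2\le N}\cB(\cR_\q,\rh(N))$ with no further appeal to Lemma~\ref{lem:rH}. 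This is precisely why $\rh$ is manufactured from the function $\eH$ of Lemma~\ref{lem:eta}. Your argument becomes correct if you either adopt this $o(N)$ cutoff, or keep your $\varepsilon_0 N$ cutoff together with the legitimate ball-dependent threshold $N_0(B_0)$; as written, the proposal stops short of a proof.
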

\begin{proof}
  By the uniform Dirichlet theorem for $\Qu$ (Theorem~\ref{thm:QDT}),
  any point $\xi$ in $B_0$ in $\overline\Delta$ can be approximated
  with an error $2/(|\q|_2N)$ for some $\q$ with $|\q|_2\leqslant N$.
  Thus for each $N\in\N$,
\begin{equation*}
  \label{eq:2}
  B_0\subseteq \kern-2mm\bigcup_{1\leqslant |\q|_2\leqslant N} 
\kern-2mmB\left(\cR_\q;\frac2{|\q|_2N}\right)
\end{equation*}
and so
\begin{equation*}
  B_0=B_0\cap \left(\bigcup_{1\leqslant|\q|_2\leqslant N}  
\kern-2mmB\left(\cR_\q;\frac2{|\q|_2N}\right) \right),
\end{equation*}
where we recall $\cR_\q =\{\pq\in \overline \Delta\}$.

To remove the dependence of the radius on the denominator $\q$, we
select `large' denominators $\q$ with $ \vpi(N)\leqslant |\q|_2\leqslant N, $
where $\vpi\colon \N\to (0,\infty)$ is given by
\begin{equation}
  \label{eq:vpi}
\vpi(m)=\eH(m)^{1/4} m,
\end{equation}
and where, by Lemma~\ref{lem:eta}, $\vpi(m)\to\infty$ as $m\to\infty$.
We remove Hurwitz rationals with `small' denominators as follows.  Let
$E(N)$ be the set of $\xi\in B_0$ with `small' denominator
approximants $\pq$, $1\leqslant |\q|_2<\vpi(N)$ with $|\xi-\pq|<2(|\q|_2
N)^{-1}$. Then
$  B_0=E(N)\cup (B_0\setminus E(N))$
 and  
\begin{equation*}
  E(N)   \subseteq\kern-2mm\bigcup_{1\leqslant |\q|_2 <\vpi(N)} 
 \kern-2mmB\left(\cR_{\q},\frac2{N|\q|_2}\right).
\end{equation*}
By~\eqref{eq:|BRq|} and other estimates
in~\S\ref{subsec:resnearesets}, the Lebesgue measure of $E(N)$
satisfies
\begin{align*}
  |E(N)| &\leqslant \left|\bigcup_{1\leqslant |\q|_2 <\vpi(N)}
    \kern-2mmB\left(\cR_{\q},\frac2{|\q|_2N}\right)\right| \leqslant
  \sum_{1\leqslant|\q|_2< \vpi(N)}
  \kern-2mm\left|B\left(\cR_{\q},\frac2{|\q|_2N}\right))\right| \\
  & \leqslant \kern-2mm\sum_{1\leqslant|\q|_2< \vpi(N)}
  \frac{2^4}{|\q|_2^4N^4}|\q|_2^4 = \frac{2^4}{N^4}
  \sum_{1\leqslant|\q|_2< \vpi(N)} \kern-4mm1
  \\
  &\ll N^{-4}\kern-2mm\sum_{1\leqslant m<\vpi(N)} \kern-2mmm^3 \ll
  N^{-4} \vpi(N)^{4}.
\end{align*}
Since $\vpi(N)=\eH(N)^{1/4}N$, it follows that
$\vpi(N)/N=o(1)$. Thus $|E(N)|\to 0$
and $|B_0\setminus E(N)|\to |B_0| $ as $N\to \infty$.  But by
definition and by~\eqref{eq:vpi}, for each $\xi\in
B_0\setminus E(N)$, there exist $\p,\q\in\hur$ with $\vpi(N)\leqslant
|\q|_2\leqslant N$ such that
\begin{equation*}
|\xi-\pb\q^{-1}|<\frac{2}{|\q|_2N} \leqslant  \frac2{\vpi(N)N} 
= \frac2{\eH(N)^{1/4} N^2} = \rh(N) 
\end{equation*}
by~\eqref{eq:rH} and \eqref{eq:vpi}. 
Moreover by Lemma~\ref{lem:rH}, $\rh$ is dyadically decaying.  Now
\begin{equation*}
  B_0\setminus E(N) 
\subseteq B_0\cap \left(\bigcup_{\vpi(N)\leqslant |\q|_2\leqslant N} 
\kern-2mmB(\cR_{\q},\rh(N)) \right)
\subseteq B_0\cap \left(\bigcup_{1\le |\q|_2\le N} 
\kern-2mmB(\cR_{\q},\rh(N)) \right)
\end{equation*}
and it follows that for $N$ sufficiently large, 
\begin{equation*}
  \label{eq:measest}
  \left|B_0\cap \kern-2mm\bigcup_{1\le |\q|_2\le N} 
\kern-2mmB(\cR_\q,\rh(N))\right|
\geqslant |B_0\setminus E(N)|
\geqslant \frac12\,|B_0|\ (\gg r^4),
\end{equation*}
whence by~\eqref{eq:ubidef} the Hurwitz rationals $\hurat$ are
ubiquitous with respect to the function $\rh$ given by~\eqref{eq:rH}
and the weight $|\cdot|_2$.   \end{proof}

Note that the Hausdorff dimension of $\cV(\Psi)$ in terms of the lower
order of $\Psi$ can be obtained with less difficulty from this
ubiquity result using the methods in~\cite{MDAMshort,DRV90a}.  To
determine the measure requires the extra power of the
Beresnevich-Velani Theorem.  

We now state the specialisation of Theorem~\ref{thm:BV} to $\quat$ and
to Lebesgue and Hausdorff measure.  This theorem unites the divergent
cases of the quaternionic \K{} and \Ja{} theorems.

\begin{theorem}
\label{thm:qbv} 
Let $\Omega=\overline \Delta\subset \Qu$ and $J=\hur\setminus \{0\}$,
so that $\delta=4$, $\cR=\hurat\cap\overline{\Delta}$, $j=\q,$ \,
$R_j=\cRq$ and $\Lambda(\Psi)=\cV(\Psi)$. Let $f$ be a dimension
function with $f(x)/x^4$ increasing and let $\rh$ be given
by~\eqref{eq:rH}, so that $\hurat\cap\overline{\Delta}$ is a ubiquitous
system with respect to the weight $\lfloor \q\rfloor=|\q|_2$ and 
$\rh$. Suppose the ubiquity sum 
   \begin{equation}
      \label{eq:qubiksum}
 \sum_{r=1}^\infty
     \frac{f(\Psi(2^r))}{\rh(2^r)^{4}} 
    \end{equation}
diverges. If  $f(x)=x^4$, then 
 \begin{equation}
\label{eq:H4V}
   \sH^4(\cV(\Psi))=  \sH^4(\overline{\Delta})
= 2^5\pi^{-2} 
 \end{equation}
and if $f(x)/x^4\to\infty$ as $x\to 0$, then
 \begin{equation*}
    \sH^f(\cV(\Psi))= \sH^f(\overline{\Delta}) =\infty.
 \end{equation*}
\end{theorem}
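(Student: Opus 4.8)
The plan is to read off Theorem~\ref{thm:qbv} as the specialisation of the Beresnevich--Velani Theorem~\ref{thm:BV} to the data $\Omega=\overline\Delta$, $J=\hur\setminus\{0\}$, $\lfloor\q\rfloor=|\q|_2$, $R_\q=\cR_\q$ and $\Lambda(\Psi)=\cV(\Psi)$, and then to evaluate $\sH^f(\overline\Delta)$ in the two cases. The first step is to check that the hypotheses of Theorem~\ref{thm:BV} hold literally. The set $\overline\Delta=[0,1]^3\times[0,1/2]$ is a closed bounded subset of $\R^4$, hence a compact metric space in the Euclidean metric, and $4$-dimensional Lebesgue measure restricted to it is a finite, non-atomic Borel measure. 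By~\eqref{eq:4ballmeasure} one has $\mu(B(\xi,r))=\frac{\pi^2}{2}r^4$ whenever $B(\xi,r)\subset\overline\Delta$; for $\xi\in\overline\Delta$ near the boundary at least a $2^{-4}$ fraction of $B(\xi,r)$ still lies in $\overline\Delta$ once $r$ is small, so $\mu(B(\xi,r))\asymp r^4$ for every sufficiently small ball, which is~\eqref{eq:mBrd} with $\delta=4$.

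The ubiquity input is supplied by the preceding lemmas. Lemma~\ref{lem:fQubi} is precisely the assertion that $\cR=\hurat\cap\overline\Delta$ is a strongly $\mu$-ubiquitous system relative to the weight $\lfloor\q\rfloor=|\q|_2$ and the function $\rh$ of~\eqref{eq:rH}, while Lemma~\ref{lem:rH}(1),(4) gives $\rh(m)=o(1)$ together with the dyadic decay~\eqref{eq:ubi2decay}. By the standing convention $\Psi\colon\N\to(0,\infty)$ is a decreasing approximation function, and $f$ is a dimension function for which $f(x)/x^4$ is monotonic. Taking $\kappa=2$ and $\delta=4$, the $\kappa$-adic ubiquity sum~\eqref{eq:critubisum1} becomes precisely~\eqref{eq:qubiksum}, which is assumed to diverge. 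Theorem~\ref{thm:BV} therefore applies and yields
\begin{equation*}
  \sH^f(\cV(\Psi))=\sH^f(\Lambda(\Psi))=\sH^f(\Omega)=\sH^f(\overline\Delta),
\end{equation*}
using the identification $\Lambda(\Psi)=\cV(\Psi)$ (compare~\eqref{eq:Lambda} with~\eqref{eq:limsupV}).

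It remains to compute $\sH^f(\overline\Delta)$. When $f(x)=x^4$ this is a finite positive number read off from the comparability~\eqref{eq:comparable} (with $m=4$), the value $|B(0,1)|=\pi^2/2$ from~\eqref{eq:4ballmeasure} and $|\overline\Delta|=1/2$, giving the quantity recorded in~\eqref{eq:H4V}. When instead $f(x)/x^4\to\infty$ as $x\to0$: for any $\eps$-cover $\{C_i\}$ of $\overline\Delta$ each $C_i$ is contained in a ball of radius $\di(C_i)$, so $|\overline\Delta|\le\sum_i|C_i|\le\frac{\pi^2}{2}\sum_i\left(\di(C_i)\right)^4$, whence
\begin{equation*}
  \sum_i f(\di(C_i))=\sum_i\frac{f(\di(C_i))}{\left(\di(C_i)\right)^4}\,\left(\di(C_i)\right)^4\ge\Bigl(\inf_{0<t\le\eps}\frac{f(t)}{t^4}\Bigr)\frac{2}{\pi^2}\,|\overline\Delta|.
\end{equation*}
Taking the infimum over covers and letting $\eps\to0$ forces $\sH^f(\overline\Delta)=\infty$; this is the standard fact, recalled in~\S\ref{sec:MaD}, that a set of positive Lebesgue measure in $\R^4$ has infinite $\sH^f$-measure once $f(x)/x^4\to\infty$. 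This completes the proof.

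The substantive work behind this theorem is not here but in Lemma~\ref{lem:fQubi} (the ubiquity of the Hurwitz rationals, extracted from the quaternionic Dirichlet Theorem~\ref{thm:QDT} and the volume estimates of~\S\ref{subsec:resnearesets}) and in Theorem~\ref{thm:BV} itself, both already in hand; so I do not anticipate a genuine obstacle at this stage. The only points requiring care are the comparability $\mu(B(\xi,r))\asymp r^4$ right up to the boundary of $\overline\Delta$, the exact matching of~\eqref{eq:critubisum1} with~\eqref{eq:qubiksum}, and keeping track of normalisations so that $\delta=4$ (and not $8$, which the squared norm in Lemma~\ref{lem:distdiff} might misleadingly suggest) --- in short, bookkeeping to ensure every hypothesis of Theorem~\ref{thm:BV} is met verbatim in the quaternionic setting.
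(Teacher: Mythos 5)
Your proposal is correct and follows the paper's route exactly: Theorem~\ref{thm:qbv} is obtained, as in the paper, by specialising the Beresnevich--Velani Theorem~\ref{thm:BV} to $\Omega=\overline\Delta$, $\delta=4$, $\kappa=2$, with the ubiquity hypothesis supplied by Lemma~\ref{lem:fQubi} and the dyadic decay of $\rh$ by Lemma~\ref{lem:rH}, and then evaluating $\sH^f(\overline\Delta)$ in the two cases. One small caveat: your own computation via \eqref{eq:comparable} with $|B(0,1)|=\pi^2/2$ and $|\overline\Delta|=1/2$ gives $\sH^4(\overline\Delta)=2^4\pi^{-2}$ rather than the $2^5\pi^{-2}$ printed in \eqref{eq:H4V}, so the discrepancy lies in the paper's stated constant, not in your argument.
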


\subsection{The proof of Theorem~\ref{thm:qKT} (Khintchine's theorem
  for $\Qu$)}
 
The proof when the critical
sum~\eqref{eq:critsumLq} converges is given
in~\S\ref{sec:Kconvergent}. 
In the case of divergence,  divergent
dyadic and standard sums need to be compared.
\begin{lemma} 
  \label{lem:comparison}
  Let $\Psi$ be a decreasing approximation function and let $f$ be a
  dimension function. 
If the sum~\eqref{eq:critsumLq} diverges,  
%$ \sum_{r=0}^\infty f(\Psi(2^r)) \rh(2^r)^{-4}$
% \begin{equation}
% \label{eq:qubi2critsum} 
%   \sum_{r=0}^\infty  \frac{f(\Psi(2^r))}{\rh(2^r)^{4}} 
% \end{equation}
then the ubiquity sum ~\eqref{eq:qubiksum} also diverges. 
\end{lemma}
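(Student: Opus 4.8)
The plan is to run a routine dyadic block comparison, but applied to the $\eH$-perturbed form of the ordinary critical sum rather than to that sum directly. First I would recall how $\rh$ is constructed: by~\eqref{eq:rH} it comes from the function $\eH=\eH(F)$ of Lemma~\ref{lem:eta} with $F(m)=f(\Psi(m))\,m^7$, so that $\rh(2^r)=2\bigl(\eH(2^r)^{1/4}\,2^{2r}\bigr)^{-1}$ and hence
\[
\frac{1}{\rh(2^r)^{4}} \;=\; \frac{1}{16}\,\eH(2^r)\,2^{8r}.
\]
By Lemma~\ref{lem:eta} the function $\eH$ is decreasing, satisfies $0<\eH(m)\le 1$, and, crucially, $\sum_{m=1}^\infty F(m)\,\eH(m)=\sum_{m=1}^\infty f(\Psi(m))\,m^7\,\eH(m)=\infty$. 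So it is enough to bound this (ordinary) divergent sum above by a constant multiple of the ubiquity sum~\eqref{eq:qubiksum}.

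The main step is the dyadic blocking. I would write $\sum_{m\ge 1}=\sum_{r\ge 0}\sum_{2^r\le m<2^{r+1}}$, noting that each block contains exactly $2^r$ integers. On the block $2^r\le m<2^{r+1}$ I use three monotonicity facts available from the hypotheses: $f\circ\Psi$ is decreasing (since $\Psi$ is decreasing and $f$ increasing), giving $f(\Psi(m))\le f(\Psi(2^r))$; $\eH$ is decreasing, giving $\eH(m)\le\eH(2^r)$; and trivially $m^7<2^{7(r+1)}=2^7\,2^{7r}$. Multiplying these bounds and counting the $2^r$ terms yields
\[
\sum_{2^r\le m<2^{r+1}} f(\Psi(m))\,m^7\,\eH(m)
\;\le\; 2^{r}\cdot 2^{7}\,2^{7r}\cdot f(\Psi(2^r))\,\eH(2^r)
\;=\; 2^{7}\,f(\Psi(2^r))\,\eH(2^r)\,2^{8r}
\;=\; 2^{11}\,\frac{f(\Psi(2^r))}{\rh(2^r)^{4}},
\]
the last equality being the displayed identity for $\rh(2^r)^{-4}$. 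Summing over $r\ge 0$ then gives
\[
\infty \;=\; \sum_{m=1}^\infty f(\Psi(m))\,m^7\,\eH(m) \;\le\; 2^{11}\sum_{r=0}^\infty \frac{f(\Psi(2^r))}{\rh(2^r)^{4}},
\]
so the ubiquity sum~\eqref{eq:qubiksum} diverges, which is the assertion of the lemma.

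I do not expect a real obstacle here. The one point that must be handled correctly is that the perturbation by $\eH$ has to be introduced \emph{before} the dyadic blocking: the unperturbed estimate $\sum_m f(\Psi(m))\,m^7\le 2^7\sum_r f(\Psi(2^r))\,2^{8r}$ is useless for this lemma, because replacing $2^{8r}$ by $\eH(2^r)\,2^{8r}\asymp\rh(2^r)^{-4}$ then costs a factor $\eH(2^r)\le 1$ in the wrong direction, and it is precisely Lemma~\ref{lem:eta} that restores divergence after that loss. Everything else is bookkeeping with the explicit form of $\rh$ in~\eqref{eq:rH} and the monotonicity already built into $\Psi$, $f$ and $\eH$.
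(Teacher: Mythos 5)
Your proof is correct and takes essentially the same route as the paper: both invoke Lemma~\ref{lem:eta} with $F(m)=f(\Psi(m))\,m^7$ to pass to the perturbed divergent sum $\sum_m f(\Psi(m))\,m^7\eH(m)$ and then run a dyadic block comparison using the monotonicity of $f\circ\Psi$. The only cosmetic difference is that you bound each block via the monotonicity of $\eH$ and convert to $\rh(2^r)^{-4}$ only at the end through the explicit identity from~\eqref{eq:rH}, whereas the paper first rewrites the perturbed sum in terms of $\rh(m)$ and then appeals to Lemma~\ref{lem:rH} (near-monotonicity and dyadic decay of $\rh$); both versions are sound and yield the same conclusion.
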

\begin{proof}
 Take $F(m)= f(\Psi(m))\,m^7$ in Lemma~\ref{lem:eta}.  Then
  by~\eqref{eq:rH}, by the choice of $\eH$ in equation~\eqref{eq:eta}
  and by Lemma~\ref{lem:eta}, the divergence of the sum $
  \sum_{m=1}^\infty f(\Psi(m))\, m^{7} $ implies that the sum
 \begin{equation}
   \label{eq:qubisum}
\sum_{m=1}^\infty f(\Psi(m))\, m^{7}\eH(m) = \sum_{m=1}^\infty
f(\Psi(m))m^7\,  \frac1{m^8\,\rh(m)^4} = \sum_{m=1}^\infty
\frac1{m}\,\frac{f(\Psi(m))}{\rh(m)^4}    
 \end{equation}
 also diverges.  Now since $f(\Psi(m))$ decreases as $m$ increases and
 since $\rh(m')\ll \rh(m)$ when $m'\geqslant m$ (Lemma~\ref{lem:rH}),  
 \begin{eqnarray*}
   \sum_{m=1}^\infty \frac1{m}\,\frac{f(\Psi(m))}{\rh(m)^4} &=& 
 \sum_{r=0}^\infty\,
\sum_{2^r\le m <2^{r+1}}  \frac1{m}\,\frac{f(\Psi(m))}{\rh(m)^4}\\
&\ll& \sum_{r=0}^\infty\,
 2^{-r} f(\Psi(2^r)) \rh(2^{r+1})^{-4}\sum_{2^r\le m <2^{r+1}} 1 \\ 
&\ll&
\sum_{r=0}^\infty f(\Psi(2^r)) \rh(2^{r})^{-4}
 \end{eqnarray*}
and the result follows. 
\end{proof}

Thus the divergence of the critical sum $ \sum_{m=1}^\infty f(\Psi(m))
m^7$~\eqref{eq:critsumLq} implies that
the ubiquity sum~\eqref{eq:qubiksum} also diverges. When the dimension
function $f$ is given by $f(x)=x^4$, it follows from~\eqref{eq:H4V}
and~\eqref{eq:comparable} that $ |\cV(\Psi)|= |\overline{\Delta}| =
1/2$.
 
\subsection{Proofs of \Ja's Hausdorff measure theorem and the
  \JB{} Theorem for $\quat$}

Theorem~\ref{thm:qbv} and Lemma~\ref{lem:comparison} can also be
applied when $f(x)/x^4\to \infty$ as $x\to 0$.  Alternatively the Mass
Transference Principle could be invoked
(see~\S\ref{subsec:appnpowerlaw}).

\paragraph{\Ja's Hausdorff measure theorem (Theorem~\ref{thm:qJT}).}
 Recall from~\eqref{eq:Hcritsum} the definition of the critical 
      sum: 
\begin{equation*}
   \sum_{m=1}^\infty m^7f(\Psi(m)).
\end{equation*}

\paragraph{The case when the critical sum converges:}
 By~\eqref{eq:cover}, for each $N=1,2,\dots$, the family of  balls
\begin{equation*}
  \{ B(\pq,\Psi(\vert \q\vert_2))  
\colon |\p|_2\le |\q|_2, \vert \q\vert_2 \geqslant N\}
\end{equation*}
is a cover for $\cV(\Psi)$.  Hence by~\eqref{eq:Hfmeasure}, for each
$N=1,2,\dots$, the Hausdorff $f$ measure of $\cV(\Psi)$ satisfies
\begin{eqnarray*}
  \sH^f( \cV(\Psi) ) & \leqslant& 
  \sum_{m=N}^\infty\kern1mm
\sum_{ m\leqslant |\q|_2 < m+1}\kern1mm
\sum_{|\p|_2 \le
     |\q|_2} 
  f(\di B(\pq, \Psi(\vert \q\vert_2))) \\ &\ll&
   \sum_{m=N}^\infty\kern1mm
\sum_{m\leqslant|\q|_2< m+1}
  \kern-2mm|\q|_2^4 \ f(2\Psi(|\q|_2)) \ll
  \sum_{m=N}^\infty m^4 \, f(2\Psi(m)) 
 \kern-3mm\sum_{m\leqslant|\q|_2< m+1} \kern-3mm1 \\ &\ll&
 \sum_{m=N}^\infty m^7 \, f(2\Psi(m)).
  \end{eqnarray*}
 But by hypothesis, $f(x)/x^4$ decreases as $x$ increases and so 
\begin{eqnarray*}
  \sH^f( \cV(\Psi) ) & \ll &
  \sum_{m=N}^\infty m^7 \, f(2\Psi(m))\, (2\Psi(m))^{-4} \, (2\Psi(m))^{4}  \\
  &\ll&  \sum_{m=N}^\infty m^7\, f(\Psi(m)) \, (\Psi(m))^{-4}  \,
  2^4(\Psi(m))^{4}\\
  &\ll&  \sum_{m=N}^\infty m^7 \, f(\Psi(m)).
 \end{eqnarray*}
 Thus $\sH^f(\cV(\Psi))=0$ when 
  $\sum_{m=1}^\infty m^7 \ f(V(\Psi))$ converges.

\paragraph{The case when the critical  sum  diverges:}
Lemma~\ref{lem:comparison} implies that the
  ubiquity sum~\eqref{eq:qubiksum} also diverges.  Hence by
Theorem~\ref{thm:qbv}, 
  \begin{equation*}
    \sH^f(\cV(\Psi))= \sH^f(\overline \Delta) = \infty
  \end{equation*}
  when $f(x)/x^4\to\infty$ as $x\to 0$,
which is Theorem~\ref{thm:qJT}. 

\paragraph{Theorem~\ref{thm:qJsm} and the \JB{} Theorem
  (Theorem~\ref{thm:qJBT}).}
The Hausdorff
  $s$-measure result follows by putting $f(x)=x^{s}$.

The Hausdorff
  dimension is the point of discontinuity of $\sH^s(\cW_v)$; this
  occurs at $s=8/v$.

\subsection{ Simultaneous Diophantine approximation in $\R^4$} 
\label{subsec:sdaR4}

 The theorems of Dirichlet, \K, \Ja{} and \JB{} 
on simultaneous \DA{}  in 4-dimensional euclidean space $\R^4$ are stated  
for comparison with quaternions. 
 First, Dirichlet's theorem in $\R^4$~\cite{HW} is stated. 
\begin{theorem}
  For each $\alpha=(\alpha_1,\alpha_2,\alpha_3,\alpha_4)\in\R^4$ and
  $N\in \N$, there exists a $\pb=(p_1,p_2,p_3,p_4)$ in $\Z^4$,
  $q\in\N$ such that
  \begin{equation*}
    \max_{1\le
      m\le4}\left\{\left|\alpha_m-\frac{p_m}{q}\right|\right\}
= \left|\alpha - \frac{\pb}{q}\right|_\infty 
<\frac1{qN^{1/4}}.
  \end{equation*}
Moreover there are infinitely many $\pb\in\Z^4, q\in\N$ such that
\begin{equation*}
   \left|\alpha - \frac{\pb}{q}\right|_\infty <\frac1{q^{5/4}}.
\end{equation*}
\end{theorem}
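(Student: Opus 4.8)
The plan is to prove both assertions by the Dirichlet box principle, exactly mirroring the one-variable argument, and then to bootstrap the finite-$N$ assertion to the ``infinitely many'' assertion just as in the proof of Theorem~\ref{thm:QDT}.

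First I would establish the finite statement. Fix $\alpha\in\R^4$ and $N\in\N$; we may assume $N^{1/4}\in\N$, since replacing $N$ by $\lfloor N^{1/4}\rfloor^4$ only strengthens the conclusion. Partition the half-open cube $[0,1)^4$ into $N$ congruent sub-boxes, each a translate of $[0,N^{-1/4})^4$, and consider the $N+1$ points
\[
\big(\{q\alpha_1\},\{q\alpha_2\},\{q\alpha_3\},\{q\alpha_4\}\big)\in[0,1)^4,\qquad q=0,1,\dots,N,
\]
where $\{\cdot\}$ denotes fractional part. By pigeonhole two of them, for $q=q_1<q_2$ say, lie in the same sub-box. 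Set $q:=q_2-q_1\in\{1,\dots,N\}$ and $p_m:=\lfloor q_2\alpha_m\rfloor-\lfloor q_1\alpha_m\rfloor$; then $|q\alpha_m-p_m|=|\{q_2\alpha_m\}-\{q_1\alpha_m\}|<N^{-1/4}$ for each $m$, and dividing by $q$ gives $|\alpha-\pb q^{-1}|_\infty<(qN^{1/4})^{-1}$. Alternatively, in closer parallel with the proof of Theorem~\ref{thm:QDT}, the same conclusion follows from Minkowski's theorem applied to the symmetric convex body
\[
K=\Big\{(\x,t)\in\R^4\times\R\colon |x_m-\alpha_m t|\le N^{-1/4}\ (1\le m\le 4),\ |t|\le N\Big\},
\]
which, after the unimodular substitution $(\x,t)\mapsto(\x-t\alpha,t)$, has volume $(2N^{-1/4})^4\cdot 2N=2^5$ and hence contains a nonzero integer point $(\pb,q)$; here $q\ne 0$ because $|t|\le N^{-1/4}<1$ forces $\x=\0$, so we may take $1\le q\le N$.

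For the ``infinitely many'' statement I would argue as in Theorem~\ref{thm:QDT}. If every coordinate of $\alpha$ is rational, say $\alpha=\pb_0 q_0^{-1}$ with $q_0\in\N$, then $|\alpha-(k\pb_0)(kq_0)^{-1}|_\infty=0<(kq_0)^{-5/4}$ for every $k\in\N$, giving infinitely many solutions. Otherwise some $\alpha_m$ is irrational, so $|\alpha-\pb q^{-1}|_\infty>0$ for all rational vectors $\pb q^{-1}$. If only finitely many pairs $(\pb^{(1)},q^{(1)}),\dots,(\pb^{(n)},q^{(n)})$ satisfied the second inequality, put $\eta:=\min_i|\alpha-\pb^{(i)}(q^{(i)})^{-1}|_\infty>0$ and choose $N>\eta^{-4}$. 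The finite statement supplies $\pb,q$ with $1\le q\le N$ and $|\alpha-\pb q^{-1}|_\infty<(qN^{1/4})^{-1}$; since $q\le N$ we have $N^{1/4}\ge q^{1/4}$, whence
\[
\Big|\alpha-\frac{\pb}{q}\Big|_\infty<\frac1{qN^{1/4}}\le\frac1{q\cdot q^{1/4}}=\frac1{q^{5/4}},
\]
so this pair satisfies the second inequality, while $(qN^{1/4})^{-1}\le N^{-1/4}<\eta$ shows it is none of the $(\pb^{(i)},q^{(i)})$ --- a contradiction.

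The only mild subtlety is, on the Minkowski route, the passage from the non-strict to the strict inequality (handled by the usual trick of contracting the side of $K$ by an arbitrarily small amount, or by invoking the compact form of Minkowski's theorem and then perturbing), together with keeping track of the sign of $q$; on the pigeonhole route even this disappears. Everything else is the verbatim $\R^4$ analogue of the classical one-variable Dirichlet argument and of the proof of Theorem~\ref{thm:QDT} already given, so no genuine obstacle is expected.
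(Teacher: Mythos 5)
The paper does not prove this theorem at all --- it is quoted from the literature (cited to Hardy--Wright) as background for comparison with the quaternionic case --- so your argument is necessarily independent of the text. Your overall strategy (pigeonhole or Minkowski for the finite statement, then the same bootstrap used in the proof of Theorem~\ref{thm:QDT} to get infinitely many solutions) is the standard and correct one, and your second half is essentially verbatim the paper's argument for Theorem~\ref{thm:QDT}, including the treatment of the rational case via non-reduced representatives; that part is fine.

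Two details in the finite statement need repair. First, the reduction ``we may assume $N^{1/4}\in\N$'' goes the wrong way: proving the result for $M=\lfloor N^{1/4}\rfloor^4\le N$ gives only $|\alpha-\pb/q|_\infty<1/(q\lfloor N^{1/4}\rfloor)$, which is \emph{weaker} than $1/(qN^{1/4})$ when $N$ is not a fourth power, and enlarging to $\lceil N^{1/4}\rceil^4$ boxes breaks the pigeonhole count; so the pigeonhole route, as written, proves the exact statement only for $N$ a perfect fourth power (the weaker form does still suffice for the second assertion). Second, on the Minkowski route your body has volume exactly $2^5$, and your proposed fix of ``contracting the side of $K$'' destroys the volume hypothesis rather than rescuing strictness. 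The clean repair is to \emph{enlarge} the $t$-range: take the open symmetric convex set $\{(\x,t)\colon |x_m-\alpha_m t|<N^{-1/4},\ |t|<N+1\}$, whose volume is $2^4N^{-1}\cdot 2(N+1)>2^5$, so Minkowski's theorem yields a nonzero integer point with strict inequalities; integrality of $q$ turns $|q|<N+1$ into $|q|\le N$, and $q=0$ would force $|p_m|<N^{-1/4}<1$, hence $\pb=\0$ (your clause ``$|t|\le N^{-1/4}<1$ forces $\x=\0$'' has the roles of $t$ and $\x$ garbled). With these corrections the proof is complete and, incidentally, closer in spirit to the paper's geometry-of-numbers proof of Theorem~\ref{thm:QDT}, which sidesteps the strictness issue by allowing the non-optimal constant $2$.
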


In the more general form of approximation, write $W^{(4)}(\Psi)$ for the
set of $\Psi$-approximable points  in $\R^4$, \ie, points $\alpha$ such that
\begin{equation*}
  \label{eq:sKT}
  \left|\alpha-\frac{\pb}{q}\right|_\infty<\Psi(q)
\end{equation*}
for infinitely many $\pb\in\Z^4$ and $q\in\N$.  Khintchine's theorem for the
set $W^{(4)}(\Psi) $ takes the form
\begin{theorem}
  \label{thm:realKTa}
   The Lebesgue measure of
  $W^{(4)}(\Psi)$ is null or full according as the critical sum
  \begin{equation*}
    \label{eq:Ksuma}
    \sum_{m=1}^\infty m^4 \Psi(m)^4
  \end{equation*}
  converges or diverges.
\end{theorem}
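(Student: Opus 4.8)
The plan is to imitate, almost verbatim, the proof of the quaternionic Khintchine theorem (Theorem~\ref{thm:qKT}): the convergence half is a Borel--Cantelli estimate, the divergence half runs through ubiquity and the Beresnevich--Velani theorem (Theorem~\ref{thm:BV}), and the \emph{only} structural difference is that the $\R^4$ form of Dirichlet's theorem stated above has the exponent $1/4$ on $N$ where Theorem~\ref{thm:QDT} has exponent~$1$. Since $W^{(4)}(\Psi)$ is invariant under translation by $\Z^4$, it suffices to work inside the unit cube $[0,1]^4$, of Lebesgue measure $1$.

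\emph{Convergence.} For each $N\in\N$ large enough that $\Psi(q)<1$ for $q\ge N$ one has
\begin{equation*}
W^{(4)}(\Psi)\cap[0,1]^4\subseteq\bigcup_{q=N}^{\infty}\ \bigcup_{\pb/q\in[-1,2]^4}B\!\left(\pb/q,\Psi(q)\right),
\end{equation*}
and since there are $\asymp q^4$ numerators $\pb\in\Z^4$ with $\pb/q$ in the enlarged cube, each contributing a ball of $4$-volume $\asymp\Psi(q)^4$, the Lebesgue measure of $W^{(4)}(\Psi)\cap[0,1]^4$ is $\ll\sum_{q=N}^{\infty}q^4\Psi(q)^4$. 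As $N$ is arbitrary and the critical sum converges, this tail tends to $0$, so $W^{(4)}(\Psi)$ is null; no monotonicity of $\Psi$ is needed here.

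\emph{Divergence.} This is the substantial half and follows \S\ref{sec:divergentsum}. Fix the resonant family $\{R_q\colon q\in\N\}$ with $R_q=\{\pb/q\colon\pb\in\Z^4\}\cap[0,1]^4$, weight $\lfloor q\rfloor=q$, ambient space $\Omega=[0,1]^4$ and $\delta=4$. Apply Lemma~\ref{lem:eta} to $F(m)=m^4\Psi(m)^4$ (divergent by hypothesis) to obtain the slowly decaying $\eH$, put $\vpi(m)=\eH(m)^{1/4}m$ and, in analogy with \eqref{eq:rH}, set $\rh(m):=\bigl(\eH(m)^{1/4}m^{5/4}\bigr)^{-1}$. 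The key step is to prove, exactly as in Lemma~\ref{lem:fQubi}, that $\{R_q\}$ is a ubiquitous system for $\rh$ and this weight: the $\R^4$ Dirichlet theorem puts every $\alpha\in[0,1]^4$ within $1/(qN^{1/4})$ of some $\pb/q$ with $q\le N$; discarding those $\alpha$ approximable only by denominators $q<\vpi(N)$ costs a set of measure $\ll\sum_{q<\vpi(N)}q^4(qN^{1/4})^{-4}=\vpi(N)/N=\eH(N)^{1/4}=o(1)$, and what remains lies within $1/(\vpi(N)N^{1/4})=\rh(N)$ of a resonant point of weight in $[\vpi(N),N]$. A copy of Lemma~\ref{lem:rH}(iv) shows $\rh$ is dyadically decaying in the sense of \eqref{eq:ubi2decay}. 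Next, as in Lemma~\ref{lem:comparison}, divergence of $\sum_m m^4\Psi(m)^4$ forces (via Lemma~\ref{lem:eta}) divergence of $\sum_m m^4\Psi(m)^4\eH(m)=\sum_m m^{-1}\Psi(m)^4\rh(m)^{-4}$, and a dyadic blocking estimate using that $\Psi$ and $\eH$ are decreasing transfers this to divergence of the $\kappa$-adic ubiquity sum \eqref{eq:critubisum1} with $\kappa=2$, $\delta=4$ and $f(x)=x^4$. Theorem~\ref{thm:BV} then gives $\sH^4(W^{(4)}(\Psi)\cap[0,1]^4)=\sH^4([0,1]^4)$, hence by \eqref{eq:comparable} $|W^{(4)}(\Psi)\cap[0,1]^4|=1$, and periodicity makes $W^{(4)}(\Psi)$ full in $\R^4$.

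The only nonroutine ingredient is the ubiquity verification, which is exactly where the geometry of simultaneous approximation in $\R^4$ (and the tuning of $\vpi$ and $\rh$ so that the small-denominator exceptional set is negligible) actually enters; once that is in hand, the divergence of the $2$-adic sum and the appeal to Theorem~\ref{thm:BV} are formal. It should be noted that, as in Theorems~\ref{thm:KT} and~\ref{thm:qKT}, the divergence half genuinely requires $\Psi$ to be decreasing.
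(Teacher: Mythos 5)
Your proof is essentially correct, but note that the paper does not prove Theorem~\ref{thm:realKTa} at all: it is stated in \S\ref{subsec:sdaR4} purely for comparison with the quaternionic theorems, the proof being the classical one of Khintchine~\cite{Kh26}, with the refinements of Gallagher~\cite{Gallagher65} and Pollington--Vaughan~\cite{PV90} recorded immediately afterwards. What you do instead is transplant the paper's own quaternionic machinery back into the classical simultaneous setting: Borel--Cantelli with the count $\asymp q^4$ of rationals $\pb/q$ per unit cube for convergence, and, for divergence, the ubiquity argument of Lemma~\ref{lem:fQubi} rerun with the $\R^4$ Dirichlet exponent --- discarding denominators $q<\vpi(N)=\eH(N)^{1/4}N$ at measure cost $\ll \vpi(N)/N=\eH(N)^{1/4}=o(1)$, obtaining ubiquity with respect to $\rh(m)=(\eH(m)^{1/4}m^{5/4})^{-1}$, and then applying the analogue of Lemma~\ref{lem:comparison} and Theorem~\ref{thm:BV} with $\delta=4$, $f(x)=x^4$. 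The exponent bookkeeping is right ($m^{-1}\Psi(m)^4\rh(m)^{-4}=m^4\Psi(m)^4\eH(m)$), and the properties of $\rh$ you need (dyadic decay and two-sided comparability across dyadic blocks) follow from Lemma~\ref{lem:eta} exactly as in Lemma~\ref{lem:rH}, so the argument goes through at the same level of rigour as the paper's proof of Theorem~\ref{thm:qKT}. What your route buys is a self-contained derivation inside the paper's ubiquity framework; what the classical route the paper is quoting buys is a stronger statement. Indeed your closing sentence is wrong: by Gallagher's theorem, cited in the paper directly after Theorem~\ref{thm:realKTa}, the divergence half in dimension $\ge 2$ does \emph{not} require $\Psi$ to be decreasing --- monotonicity is needed only for your (and the paper's) method, not for the result itself.
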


Gallagher~\cite{Gallagher65} showed that $\Psi$ need not be decreasing
in dimensions $\geqslant 2$, and Pollington \& Vaughan established
that the Duffin-Schaeffer Conjecture also holds in this
case~\cite{PV90}.  \Ja's Hausdorff $f$-measure
result~\cite[Theorem~DV, pg.~66]{BDV06} is now stated for
$W^{(4)}(\Psi)$.

\begin{theorem}[\Ja]
  Let $f$ be a dimension function such that
  $f(x)/x^{4}$ decreases as $r$ increases and $f(x)/x^{4}\to\infty$ as
  $x\to 0$.  Then
  \begin{equation*}
    \sH^f(W^{(4)}(\Psi)) =
    \begin{cases} 0   & \text{ when } \sum_{r=1}^\infty r^4f(\Psi(r)) < \infty \\
    \infty    & \text{ when } \sum_{r=1}^\infty r^4f(\Psi(r)) 
                         = \infty \ \text{ and } \Psi \text{ decreasing}.
  \end{cases}
  \end{equation*}
\end{theorem}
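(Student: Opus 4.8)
The plan is to follow the same two-case template used for the quaternionic peaks in this paper. Since $W^{(4)}(\Psi)$ is invariant under translation by $\Z^4$, it suffices (exactly as in~\eqref{eq:WUV}) to work inside the unit cube $[0,1]^4$, where $W^{(4)}(\Psi)\cap[0,1]^4=\bigcap_{N\ge1}\bigcup_{q\ge N}\bigcup_{\pb}B(\pb/q,\Psi(q))$ is a $\limsup$ set assembled from finitely many balls at each level $q$. The convergence half is the exact analogue of the argument in~\S\ref{sec:Kconvergent} and uses only the natural cover; the divergence half is the hard one, and I would obtain it by feeding the $\R^4$ Khintchine theorem (Theorem~\ref{thm:realKTa}) into the Mass Transference Principle of~\S\ref{subsec:mtp2}.

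For the convergence case, fix $N$ and cover $W^{(4)}(\Psi)\cap[0,1]^4$ by the balls $B(\pb/q,\Psi(q))$ with $q\ge N$ and $\pb/q\in[0,1]^4$. For each $q$ there are $\asymp q^4$ such rationals, and since $f(x)/x^4$ decreases the function $f$ is doubling, so $f(\di B(\pb/q,\Psi(q)))=f(2\Psi(q))\ll f(\Psi(q))$. Summing over the cover gives $\sH^f(W^{(4)}(\Psi))\ll\sum_{q\ge N}q^4 f(\Psi(q))$, which is the tail of a convergent series and hence tends to $0$ as $N\to\infty$; thus $\sH^f(W^{(4)}(\Psi))=0$.

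For the divergence case, I would apply the Mass Transference Principle with ambient dimension $n=4$ and the given dimension function $f$, whose monotonicity requirement (that $f(x)/x^4$ decrease) is exactly what is assumed. Take the families $\cB_q=\bigcup_{\pb}B(\pb/q,\Psi(q))$, finite unions inside the cube with all balls of the common radius $\Psi(q)\to0$, so that $\cB_q^f=\bigcup_{\pb}B(\pb/q,f(\Psi(q))^{1/4})$ and $\limsup_q\cB_q=W^{(4)}(\Psi)\cap[0,1]^4$. The map $q\mapsto f(\Psi(q))^{1/4}$ is an approximation function, since $\Psi(q)\to0$ and $f(x)\to0$ as $x\to0$, and its Khintchine sum is $\sum_q q^4\big(f(\Psi(q))^{1/4}\big)^4=\sum_q q^4 f(\Psi(q))$, which diverges by hypothesis; so by Theorem~\ref{thm:realKTa} the set $\limsup_q\cB_q^f$ is full, i.e.\ $|B_0\cap\limsup_q\cB_q^f|=|B_0|$ for every ball $B_0$. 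The Mass Transference Principle then yields $\sH^f(B_0\cap W^{(4)}(\Psi))=\sH^f(B_0)$ for every $B_0$, and since $f(x)/x^4\to\infty$ as $x\to0$ one has $\sH^f(B_0)=\infty$; therefore $\sH^f(W^{(4)}(\Psi))=\infty$.

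The only genuinely deep input is the divergence case, and the difficulty there is entirely absorbed by the two cited black boxes: the $\R^4$ Khintchine theorem and the Mass Transference Principle. If one preferred not to quote Theorem~\ref{thm:realKTa}, the alternative is to invoke the Beresnevich-Velani theorem (Theorem~\ref{thm:BV}) directly, with $\Omega=[0,1]^4$, $\delta=4$, the resonant sets being the lattices $\{\pb/q\colon\pb\in\Z^4\}\cap[0,1]^4$ indexed by $q$ with weight $q$, and ubiquity function $\rho(N)\asymp N^{-5/4}$; the exponent $5/4$ is the Dirichlet exponent in $\R^4$, playing the role that $2$ plays for $\R$ and for $\Qu$. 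Verifying that the rationals form such a ubiquitous system, the $\R^4$ analogue of Lemma~\ref{lem:fQubi} deduced from Dirichlet's theorem in $\R^4$, is then the technical heart, after which Theorem~\ref{thm:BV} delivers both halves of the divergence statement at once; but since Theorem~\ref{thm:realKTa} is already on record, the Mass Transference route is shorter.
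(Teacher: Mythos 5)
Your proof is correct, but be aware that the paper itself does not prove this statement: it is quoted purely for comparison with the quaternionic results, with the proof delegated to the literature (Theorem~DV, p.~66 of \cite{BDV06}), so there is no internal argument to match yours against. What you have written is essentially the strategy the paper sketches for its own quaternionic and complex theorems, transplanted to $\R^4$: the convergence half is the natural-cover computation of \S\ref{sec:Kconvergent} (with $\asymp q^4$ points $\pb/q$ per denominator and the doubling bound $f(2x)\ll f(x)$ supplied by the monotonicity of $f(x)/x^4$), and the divergence half feeds Theorem~\ref{thm:realKTa}, applied to the decreasing approximation function $q\mapsto f(\Psi(q))^{1/4}$ whose Khintchine sum is exactly $\sum_q q^4 f(\Psi(q))$, into the Mass Transference Principle of \S\ref{subsec:mtp2}, and then uses $f(x)/x^4\to\infty$ as $x\to 0$ to conclude $\sH^f(B_0)=\infty$ for any ball $B_0$. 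Both halves are sound. Two small points worth tidying: $W^{(4)}(\Psi)$ is defined via the sup norm while your $\cB_q$ are Euclidean balls, so either carry the sup norm throughout or replace $f(\Psi)^{1/4}$ by $f(\Psi)^{1/4}/2$ (which changes the critical sum only by a constant) before invoking Theorem~\ref{thm:realKTa}; and, as the paper notes via Gallagher, monotonicity of the transferred function is not even needed in dimension $4$, so that step is safe. Your alternative route through Theorem~\ref{thm:BV}, with resonant sets $\{\pb/q\colon\pb\in\Z^4\}$, weight $q$ and ubiquity function $\rho(N)\asymp N^{-5/4}$ coming from Dirichlet's theorem in $\R^4$, is indeed the exact analogue of Lemma~\ref{lem:fQubi} and Theorem~\ref{thm:qbv}; but since Theorem~\ref{thm:realKTa} is already on record, the mass-transference argument you give first is the shorter and cleaner derivation.
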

As in the case for $\R$, the two results can be combined into a single
`\K-\Ja' theorem. 

Let $W_v$ denote the set of $\Psi$-approximable points in $\R^4$ when
$\Psi(x)=x^{-v}$.  The \JB{} theorem on simultaneous \DA{} in $\R^4$
follows by taking the dimension function $f(x)=x^{-s}, s>0$.

\begin{corollary}
  \begin{equation*}
  \hdim (W_v)=  
 \begin{cases}     \frac{5}{v}     &  \text{ when } v\le 5/4 \\
                      4 &  \text{ when } v\ge 5/4 .
  \end{cases}
 \end{equation*}
\end{corollary}

 It is evident that exponents in the sums and the \HD {} are quite
  different.  Note that the the identitity $\pq = \p\overline\q/n$,
  where $n=q_1^2+\dots+q_4^2$, gives a natural embedding of
  $\cW(\Psi)$ into 
\begin{pspicture}(0,0)(2.1,0.3)
\rput(1.05,0.15){$W^{(4)}(\Psi\circ \vrule width 6mm height 0 mm depth
  0mm)$}
\rput(1.65,0.15){$\sqrt{\vrule width 2mm height 0 mm depth 0mm}$}
\end{pspicture}
(recall
  $\Psi(x)=\Psi([x]))$.  In
  particular $\cW_v\hookrightarrow W^{(4)}_{v/2}$.

  \section{\Ja's theorem for badly approximable quaternions}
\label{sec:BAquats}
 
The set $\fBQ$ of badly approximable quaternions is defined
analogously to the real case in~\S\ref{subsec:QBA} and are quaternions
for which the exponent in Theorem~\ref{thm:QDT} cannot be increased.
As with ubiquitous systems in~\S\ref{sec:ubiquity}, this notion can be
placed in a general setting of a metric space $(X,d)$ with a compact
subspace $\Omega$ which contains the support of a non-atomic finite
measure $\mu$ and a family $\cR=\{R_j\colon j\in J\}$ of resonant
sets, where $J$ is a countable discrete index set (see~\cite{ktv06}).
The Hausdorff dimension of the set $\mathfrak{B}_\Omega$ of badly
approximable points in $\Omega$ can be determined if the following two
conditions on $\mu$ and $\Psi$ hold.

\vspace{0.05in}

 First, for each ball $B(\xi,r)$,  the measure $\mu$  satisfies 
\begin{equation*}
\label{eq:meascondn} 
   a r^\delta\le \mu(B(\xi,r))\le b r^{\delta},
\end{equation*}
where $0<a\le 1\le b$.  This condition is satisfied by Lebesgue
measure and implies that the Hausdorff dimension of $\Omega$ is given
by $\hdim (\Omega)=\delta$.

\vspace{0.05in}
Secondly, for $\kappa >1$ sufficently large, $\Psi$
  satisfies the `$\kappa$-adic' decay condition
\begin{equation*}
\label{eq:Psicondn} 
  \ell(\kappa) \le \frac{\Psi(\kappa^n)}{\Psi(\kappa^{n+1})} \le
  u(\kappa), \ n \in \N,
\end{equation*}
where $\ell(\kappa) \le u(\kappa) $ and $\ell(\kappa) \to \infty$ as
$\kappa\to\infty$ ({\em{cf}}~\eqref{eq:mBrd} in Theorem~\ref{thm:BV}).
 It is convenient to write for each $n\in\N$
\begin{equation*}
  \label{eq:critrat}
\nu_n=\nu_n(\Psi,\kappa):=  
\left(\frac{\Psi(\kappa^n)}{\Psi(\kappa^{n+1})}\right)^\delta .
\end{equation*}

Recall from~\S\ref{subsec:QBA} that a point $\beta \in\Omega$ which
for some constant $c(\beta)>0$ satisfies
\begin{equation*}
  \label{eq:genbad}
  d(\xi,R_j)\geqslant c(\beta) \Psi(\lfloor j\rfloor) {\text{ for all }} j\in J
\end{equation*}
is called $\Psi$-{\em{badly approximable}}.  The
set of $\Psi$-badly approximable points in $X$ will be denoted by
$\mathfrak{B}_X(\Psi)$.
For each $n\in\N$, let 
$\xi\in\Omega$ and write for convenience
\begin{equation*}
B^{(n)}:=B(\xi,\Psi(\kappa^n))=\{\xi'\in\Omega\colon 
d(\xi,\xi')\le \Psi(\kappa^n)\}  
\end{equation*}
and its scaling by $\theta\in (0,\infty)$ as
\begin{equation*}
  \theta B^{(n)}:=B(\xi,\theta\Psi(\kappa^n))=\{\xi'\in\Omega\colon 
d(\xi,\xi')\le \theta \Psi(\kappa^n)\}.
\end{equation*}

Apart from some changes in notation, the following is Theorem~1
in~\cite{ktv06}  and gives conditions under which  the
Hausdorff dimension of the set of $\Psi$-badly approximable points in
$\Omega$ can be obtained.

\begin{theorem}
  \label{thm:gBA}
  Let $(X,d)$ be a metric space and $(\Omega,d,\mu)$ a compact
  subspace of $X$ with a measure $\mu$.  Let the measure $\mu$ and the
  function $\Psi$ satisfy conditions (A) and (B) respectively. For
  $\kappa \geqslant \kappa_0 >1$, suppose there exists some $\theta\in
  (0,\infty)$ so that for $n\in\N$ and any ball $B^{(n)}$, there exists a
  collection $\cC^{(n+1)}$ of disjoint balls $2\theta
  B^{(n+1)}=B(\cb,2\theta \Psi(\kappa^{n+1})) $ 
  in $\theta \, B^{(n)} $, satisfying
\begin{equation}
 \label{eq:lbballs}
 \# \, \cC^{(n+1)} \geqslant K_1 \nu_n
\end{equation}
and 
\begin{equation}
  \label{eq:ubballs}
  \# 
\left\{
2\theta B^{(n+1)}\subset \theta \, B^{(n)}\colon 
\kern-5mm\min_{{\substack{j\in J,\\ \kappa^{n-1}\leqslant \lfloor j\rfloor < \kappa^n}}} 
\kern-5mmd(\cb, R_j)\leqslant 2\theta \Psi(\kappa^{n+1})\right\}
\leqslant K_2\nu_n,
\end{equation}
where $K_1,K_2$ are absolute constants, independent of $\kappa$ and
$n$, with $K_1>K_2>0$.  Furthermore suppose that $\hdim(\cup_{j\in J}
R_j)<\delta$.  Then
\begin{equation*}
  \hdim \mathfrak{B}_\Omega(\Psi) = \delta.
\end{equation*}
\end{theorem}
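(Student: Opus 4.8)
The plan is as follows. The statement is, up to notation, Theorem~1 of~\cite{ktv06}, so in practice it is quoted rather than reproved; but the natural self-contained route is the classical Cantor-set-plus-mass-distribution argument, which I would outline. First I would dispose of the easy direction: condition (A) (a $\delta$-regularity of $\mu$) forces $\hdim\Omega=\delta$, by the standard comparison of $\mu$ with $\delta$-dimensional Hausdorff measure, and since $\mathfrak{B}_\Omega(\Psi)\subseteq\Omega$ this gives $\hdim\mathfrak{B}_\Omega(\Psi)\le\delta$. Everything else is the matching lower bound.

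For the lower bound I would fix $\kappa\ge\kappa_0$ and build recursively a nested family of collections $\cC^{(n)}$ of pairwise disjoint balls $\theta B^{(n)}$ of radius $\theta\Psi(\kappa^n)$, each member of $\cC^{(n+1)}$ contained in a unique member of $\cC^{(n)}$. To pass from $\theta B^{(n)}$ to its children I would take the $\ge K_1\nu_n$ disjoint balls $2\theta B^{(n+1)}$ supplied by~\eqref{eq:lbballs}, discard the $\le K_2\nu_n$ of them whose centres come within $2\theta\Psi(\kappa^{n+1})$ of some resonant set $R_j$ with $\kappa^{n-1}\le\lfloor j\rfloor<\kappa^n$ (permitted by~\eqref{eq:ubballs}), and keep the concentric balls $\theta B^{(n+1)}$ of the $\ge(K_1-K_2)\nu_n>0$ survivors. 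Setting $\cK:=\bigcap_N\bigcup_{B\in\cC^{(N)}}B$, every $\xi\in\cK$ is, at scale $n$, bounded away from all $R_j$ with weight in $[\kappa^{n-1},\kappa^n)$; since these intervals exhaust all weights and $\Psi$ obeys the two-sided $\kappa$-adic decay (B), I would deduce $d(\xi,R_j)\ge c\,\Psi(\lfloor j\rfloor)$ for all $j\in J$ with a single constant $c\asymp\theta\,u(\kappa)^{-2}$, so $\cK\subseteq\mathfrak{B}_\Omega(\Psi)$.

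Finally I would place the natural probability measure $\mu_\cK$ on $\cK$ (equidistributing mass over the children at each stage), bound $\mu_\cK(B(\xi,r))$ above by a power $r^s$ using condition (A) and the $\kappa$-adic control of the radii, and apply the mass distribution principle to get $\hdim\cK\ge s$, with $s\to\delta$ as $\kappa\to\infty$; hence $\hdim\mathfrak{B}_\Omega(\Psi)\ge\delta$ and equality follows. The side hypothesis $\hdim(\bigcup_{j}R_j)<\delta$ guarantees that the resonant sets are $\mu$-null, so they do not disturb this estimate. In the quaternionic application one takes $\cR=\hurat\cap\overline\Delta$, which is countable (so that hypothesis holds trivially), and the separation needed to verify~\eqref{eq:ubballs}---that two Hurwitz rationals with denominators of size $\kappa^n$ lie at distance $\asymp\kappa^{-2n}$, so that a ball of radius a small multiple of $\kappa^{-2n}$ contains at most one such resonant point---is exactly Lemma~\ref{lem:distdiff}. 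The step I expect to be the main obstacle is the mass distribution estimate: $\mu_\cK(B(\xi,r))$ must be controlled at \emph{every} radius $r$, not just those built into the construction, and this is precisely where the two-sided decay in (B) and the regularity in (A) are indispensable.
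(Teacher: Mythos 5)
Your proposal is correct and matches the paper, which gives no proof of this statement at all: it is imported verbatim (up to notation) as Theorem~1 of~\cite{ktv06}, exactly as you say. Your outlined Cantor-construction-plus-mass-distribution argument, using~\eqref{eq:lbballs} and~\eqref{eq:ubballs} to keep $\ge(K_1-K_2)\nu_n$ surviving balls at each stage and condition (B) to convert the scale-by-scale separation into a single constant $c\asymp\theta\,u(\kappa)^{-2}$, is a faithful sketch of the cited proof, so there is nothing to compare beyond the citation.
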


The general metric space setting is again specialised to $\quat$ to
give the analogue of \Ja's theorem for the Hausdorff measure and
dimension of the set $\QBA$ of badly approximable quaternions.  When
$X=\quat$ and $\Omega=\overline \Delta$, the measure $\mu$ is
4-dimensional Lebesgue measure, $\delta = 4$, the resonant set $R_j$
is the point $\pq\in\hurat$ and $\lfloor j\rfloor=|\q|_2$.  In view of
the exponent $2$ in~\eqref{eq:QDio} being extremal, we can take
\begin{equation*}
  \Psi(|\q|_2)= |\q|_2^{-2}, 
\end{equation*}
so that $\mathfrak{B}_X(\Psi) = \fBQ $.  Thus in this case 
\begin{equation*}
 \nu_n=  \left(\frac{\Psi(\kappa^n)}{\Psi(\kappa^{n+1})}\right)^\delta =
\left(\frac{\kappa^{-2n}}{\kappa^{-2(n+1)}}\right)^4= \kappa^8,
\end{equation*}
whence $ \nu_n$ is independent of $n$ and satisfied~\eqref{eq:Psicondn}. 

\vspace{0.05in} 
Let
\begin{equation*}
  \label{eq:theta}
\theta=2^{-1}\kappa^{-2}  
\end{equation*} 
and let the 4-ball $B^{(n)} = B(\xi,\kappa^{-2n}) $ lie in
$\overline\Delta$. Then the shrunken ball $\theta B^{(n)}=B(\xi,\theta
\kappa^{-2n}) $ has radius $2^{-1}\kappa^{-2(n+1)} $. 
A collection $\cC^{(n+1)}$ of closed disjoint balls in $\theta
B^{(n)}$ is constructed.  Divide the ball $\theta B^{(n)}$ into
hypercubes $H^{(n+1)}$ of side length $\ell=2^{5/4}
\kappa^{-2(n+2)}$. The number of such hypercubes is at least
\begin{equation*}
\frac12 \frac{|\theta B^{(n)}|}{\ell^4} = 
\frac{\pi^2}{4}\, 2^{-4} \kappa^{-8(n+1)}\times 2^{-5}\,\kappa^{8(n+2)}
=  \frac{\pi^2}{2^{11}} \, \kappa^8.
\end{equation*}
Let $\cC^{(n+1)}$ be the collection of balls $2\theta B^{(n+1)}$ of
radius $ \kappa^{-2(n+2)}$, centred at the centre $\cb$ of a hypercube
$H^{(n+1)}$. The number $ \# \cC^{(n+1)}$ of such  balls
satisfies
\begin{equation*}
 \#\, \cC^{(n+1)} \geqslant  2^{-11} \pi^2 \, \kappa^8
\end{equation*}
 and we can
choose $K_1=  \pi^2/2^{11}$ in~\eqref{eq:lbballs}.

\vspace{0.05in} 

The distance between two points in the ball
$\theta B^{(n)}=B(\xi,\theta \kappa^{-2n}) $ is at most
$\kappa^{-2(n+1)}$. Consider two
distinct Hurwitz rationals $\pq,\rs $, where $\kappa^n\le
|\q|_2, |\sv|_2 < \kappa^{n+1}$ and $\kappa>1$.  
By Lemma~\ref{lem:distdiff},
\begin{equation*}
  |\pq - \p'{\q'}^{-1}|\geqslant  |\q|_2^{-1}|\sv|_2^{-1}  >
  \kappa^{-2(n+1)}, 
\end{equation*}
so that $\theta B^{(n)}$ contains at most one Hurwitz rational $\pq$
with $\kappa^n\leqslant |\q|_2 < \kappa^{n+1}$. Thus such a point $\pq$ can
be in at most one ball $2\theta B^{(n+1)}\in \cC^{(n+1)}$.
Hence for quaternions, the
inequality~\eqref{eq:ubballs} reduces to
 \begin{equation*}
 \# \left\{
2\theta B^{(n+1)}\subset \theta \, B^{(n)}\colon 
\pq\in 2\theta B^{(n+1)},\, \kappa^{n}\leqslant |\q|_2 <
      \kappa^{n+1}
\right\}\leqslant 1   < \frac{\pi^2}{2^{12}}\, \kappa^8
 \end{equation*} 
 for $\kappa\geqslant 3$, so that we can choose $K_2= \pi^2/2^{12} <K_1$.
 Finally, since the resonant sets $\pq$ are points,
 $\hdim(\{\pq\}) =0$. It now follows from Theorem~\ref{thm:gBA} that
 $\hdim(\fBQ)=4$, \ie, $\fBQ\subset \quat$ has full Hausdorff dimension.

 As in the classical case, Theorem~\ref{thm:qKT} can be used to show 
that $\fBQ$ is null. 
 Indeed since $\Psi(m)= m^{-2}$, the sum $\sum_{m\in\N}
 \Psi(m)^4m^{7}= \sum_{m\in\N} \, m^{-1}$ diverges. Hence the set of
 $\beta\in \overline\Delta$ satisfying the inequality
   \begin{equation*}
     |\beta-\pq|_2\geqslant \frac{1}{|\q|_2^2} 
   \end{equation*}
   for all but finitely many $\pq$, say $\p^{(m)}(\q^{(m)})^{-1}$,
   $m=1,2,\dots,N=N(\xi)$, is null.  Let
\begin{equation*}
  c(\beta):= \min\{1, |\beta-\p^{(m)}(\q^{(m)})^{-1}|_2
 |\q^{(m)} |_2^2 \colon m=1,2,\dots N\}.
\end{equation*}
Then the set of $\beta\in \overline\Delta$ satisfying the inequality
\begin{equation*}
  |\beta-\pq|_2\geqslant \frac{c(\beta)}{|\q|_2^2}  
\end{equation*}
for all $\pq$ is null. This completes the proof of
Theorem~\ref{thm:QJTBA}, the analogue of \Ja's theorem for
the set $\fBQ$ of badly approximable quaternions.

\section{Acknowledgements}
We are grateful to Victor Beresnevich and Sanju Velani for their
interest, advice and patience, and to the referee for helpful
comments.

\bibliographystyle{plain} 
%\bibliography{version6}

%\bibliographystyle{amsplain}  

%   \bibliography{epi,hsigproc,cupbook}
%   \bibliographystyle{amsplain} 

\newcommand{\noopsort}[1]{} \newcommand{\singleletter}[1]{#1}
\providecommand{\bysame}{\leavevmode\hbox to3em{\hrulefill}\thinspace}
\providecommand{\MR}{\relax\ifhmode\unskip\space\fi MR }
% \MRhref is called by the amsart/book/proc definition of \MR.
\providecommand{\MRhref}[2]{%
  \href{http://www.ams.org/mathscinet-getitem?mr=#1}{#2}
}
\providecommand{\href}[2]{#2}

\end{document}